\renewcommand{\thefootnote}
\def\_#1{{\lower 0.7ex\hbox{}}_{#1}}
\def\Om{\Omega}%
\def\om{\omega}%
\newtheorem{lemma}{Lemma}[section]
\newtheorem{proposition}{Proposition}[section]
\newtheorem{theorem}{Theorem}[section]
\newtheorem{corollary}{Corollary}[section]
\newtheorem{definition}{Definition}[section]
\newcommand{\beq}{\begin{eqnarray}}
\newcommand{\eeq}{\end{eqnarray}}
\newcommand{\beqa}{\begin{eqnarray*}}
\newcommand{\eeqa}{\end{eqnarray*}}
\begin{document}

\begin{frontmatter}

\title{On Pareto equilibria for bi-objective diffusive optimal control problems}

\author[UESPI]{P. P. de Carvalho }
\ead{pitagorascarvalho@gmail.com}

\author[USevilla]{E. Fern\'andez-Cara}
\ead{cara@us.es}

\author[UFF]{J. L\'imaco}
\ead{jlimaco@vm.uff.br}

\author[UFF]{D. Menezes}
\ead{denilson.jesus@id.uff.br}

\author[UFF]{Y. Thamsten}
\ead{ythamsten@id.uff.br}

\address[UESPI]{Universidade Estadual do Piau\'i, Teresina-PI, Brasil}
\address[USevilla]{Dpto. E.D.A.N., Universidad de Sevilla, Aptdo. 1160, 41080 Sevilla, Spain}
\address[UFF]{Instituto de Matem\'atica de Estat\'istica, Universidade Federal Fluminense, RJ, Brasil}

\date{}


\begin{abstract}
  We investigate Pareto equilibria for bi-objective optimal control problems. Our framework comprises the situation in which an agent acts with a distributed control in a portion of a given domain, and aims to achieve two distinct (possibly conflicting) targets. We analyze systems governed by linear and semilinear heat equations and also systems with multiplicative controls. We develop numerical methods relying on a combination of finite elements and finite differences. We illustrate the computational methods we develop via numerous experiments. 
\end{abstract}

\begin{keyword}
Heat equations, Pareto equilibria, Finite elements, Finite differences, Optimal control
\MSC[2020] 35Q93; 49J20; 49K20; 58E17.
\end{keyword}

\end{frontmatter}



\section{Introduction} \label{sec:Intro}

Popular models describing a wide range of natural phenomena comprise a system governed by a set of differential equations. Complex problems usually have infinitely many degrees of freedom, leading us to study situations where the dynamics of the state variables is described by Partial Differential Equations (PDE's). A rich class of problems arises when we are not only interested in the observed unaffected system evolution (as a result, e.g., of physical laws of nature), but we want to investigate how agents act upon it to attain a desired behavior.

In a competitive multi-agent setting, i.e., when many competing rational agents are interacting while trying to influence the dynamics of a system, the proper notion of equilibrium is that of Nash. In practice, we encounter such scenarios, e.g., on environmental problems, see \cite{diaz2002neumann} and the references therein. A precise example is the one described in \cite{diaz2004approximate}, where there is a resort lake containing chemical substances governed by diffusive equations. Many plants are located in portions of it, and decide their corresponding targets. Also allowing for the presence of a manager (i.e., an agent having the first-mover advantage), the paper \cite{diaz2004approximate} treats Stackelberg-Nash equilibria for this problem. Since then, this research line flourished considerably, see \cite{araruna2018stackelberg,araruna2017new,araruna2015stackelberg,carreno2019stackelberg,de2020numerical,guillen2013approximate,limaco2009remarks}, to name a few other works in this direction. Some examples of advances on the numerical computation of Nash equilibria are \cite{borzi2013formulation,de2020some,dreves2016nash,dreves2016jointly,rahman2015fem,ramos2002pointwise,ramos2019nash,ramos2007nash}.

The multi-objective single-player problem comprises the situation in which an agent looks at several different targets, possibly conflictive, and an optimal strategy of action is searched for in a suitable overall sense. The mathematical economist V. Pareto proposed the following notion of optimality in this setting, see \cite{pareto1964cours}: a control action is optimal if any change in it cannot lead to an improved performance of all the objectives. We expect that a rational agent envisaging to optimize such a set of objectives would have no incentive to deviate from such a strategy. The investigation of Pareto optimal controls is quite important, e.g., from the viewpoint of the development of public policies. For a survey of recent trends on multi-objective optimal control problems, see \cite{peitz2018survey}. In the paper \cite{logist2010fast}, the authors analyze the computation of Pareto fronts in models governed by ODEs and subject to various types of constraints. Some works considering Pareto optimality in the context of diffusive systems are \cite{baradaran2009optimal,banirazi2020heat,chen2014fast,damavandi2017modeling}.

The study of Pareto optimal controls for distributed PDE systems goes back to \cite{lions1987pareto}. Some other advances afterwards are \cite{bahaa2003quadratic,nakoulima2003pareto}. In \cite{alvarez2010multi}, the authors consider an application of Pareto optimality to a problem of wastewater management. The controls they consider are pointwise and the concentration of the pollutant in question is governed by a diffusive PDE having an appropriate advection term. A recent development, in \cite{desilles2019pareto}, is a new approach to characterize the Pareto front in a Hamilton-Jacobi framework.

For practical problems, being able to numerically compute the Pareto optimal strategies is an obvious demand. In \cite{ramos2002nash}, the authors use conjugate gradient algorithms to compute Nash equilibria when the system is modeled by linear parabolic equations. P. Carvalho and E. Fern\'andez-Cara developed fixed point methods to compute Nash and Pareto equilibria for bi-objective problems under linear and semilinear heat equations in \cite{carvalho2019computation} and, in \cite{de2020computation}, together with J. L\'imaco, these problems were considered for models governed by wave equations. The approach in the work \cite{alvarez2010multi} is to propose an algorithm based in a characteristics-finite element discretization. The authors of \cite{fernandez2020theoretical} developed algorithms for the computation of Pareto optimal strategies for stationary Navier-Stokes models of equations. 

In the present paper, we concentrate in three classes. Namely, assuming a scalar state variable $u=u(x,t),$ together with a scalar control $v=v(x,t),$ where $x$ and $t$ are respectively the spatial and time variable, we consider linear heat equations,
$$
u_t - \Delta u = v,
$$
semilinear heat equations
$$
u_t - \Delta u + F(u) = v,
$$
as well as bilinearly controlled heat equations
$$
u_t - \Delta u = vu.
$$
As in \cite{carvalho2019computation}, we consider bi-objective problems. Thus, we introduce two square-integrable targets $u_1^T = u_1^T(x)$ and $u_2^T = u_2^T(x)$ corresponding optimization criteria $J_i : \mathcal{U} \rightarrow \mathbb{R},$ $i\in \left\{1,2\right\},$ with
\begin{equation} \label{def:ObjFuncs}
    J_i\left( v \right) := \int_{\mathcal{O}_i} \left| u(T) - u_i^T \right|^2\,dx + \frac{\mu}{2} \iint_{\om\times (0,T)} \left| v\left(x,t\right) \right|^2\,dx\,dt.
\end{equation}
Here, $\mathcal{U}$ is a suitable admissible control set, the $\mathcal{O}_i$ are the regions where the agent desires to drive $u(T,\cdot)$ close to $u_i^T$ and $\om$ is fixed a region containing the supports of the strategies under consideration. The set $\mathcal{U}$ is assumed to be closed in $L^2\left(\om \times \left(0,T\right) \right)$ and convex. As we previously discussed, we notice here that, as long as the overlapping region $\mathcal{O}_1 \cap \mathcal{O}_2$ is non-empty, the two objectives may fall in conflict inside.

Let us recall the notion of Pareto optimality that we are going to employ.
\begin{definition} \label{def:ParetoEq}
A control $\hat{v} \in \mathcal{U}$ is called a Pareto equilibrium if there is no other $v \in \mathcal{U}$ such that
$$
J_1\left(v\right) \leqslant J_1\left( \hat{v} \right) \text{ and } J_2\left(v\right) \leqslant J_2\left( \hat{v} \right),
$$
with at least one of these inequalities holding strictly.
\end{definition}
A related notion, which proves to be technically useful to characterize Pareto optimal strategies, is that of Pareto quasi-equlibria. Let us be more precise:
\begin{definition} \label{def:ParetoQuasiEq}
A control $\hat{v}$ is called a Pareto quasi-equilibrium if there exists $\alpha \in \left[0,1\right]$ such that the G\^ateaux derivatives of $J_1$ and $J_2$ at $\hat{v}$ in the direction of $w-\hat{v}$ satisfy
$$
\left\langle \alpha J_1^\prime\left( \hat{v} \right) + \left(1-\alpha \right) J_2^\prime\left(\hat{v} \right), w-\hat{v} \right\rangle \geqslant 0,
$$
for every $w \in \mathcal{U}.$
\end{definition}

Let us describe the outline of the paper:
\begin{itemize}
    \item In Section \ref{sec:Linear}, we deal with models governed by linear heat equations. We give a complete picture of the Pareto front for $\mu\geqslant 0.$
    \item In Section \ref{sec:Semilinear}, we consider systems driven by semilinear parabolic equations. We deal with sufficiently smooth semilinearities, with suitable boundedness assumptions. The situation for $\mu = 0$ is similar to that of the linear case, whereas for $\mu > 0$ we completely characterize the Pareto front assuming that the spatial dimension is $\leqslant 3$ and $\mu$ is sufficiently large.
    \item In Section \ref{sec:bilinear}, we consider multiplicative controls. If the class of controls comprise square-integrable controls, with no further general restrictions, we are once again apt to characterize the Pareto front for $\mu$ large enough, but now assuming the spatial dimension to be at most two. If we work with uniformly essentially bounded controls then, up to dimension three, we can also provide suitable descriptions of the Pareto optimal strategies.
    \item In Section \ref{sec:Numerics1}, we present the algorithms we will use to obtain the solutions computationally. We prove the convergences of all of them. For the linear problem, we employ a conjugate gradient algorithm. For the semilinear model, we propose a fixed point iterative method for $\mu$ large enough and a Newton-Raphson method for general $\mu,$ assuming the initial data to be close enough to the targets. In the framework of multiplicative controls, we employ a gradient descent algorithm if $\mu$ is large and the spatial dimension does not exceed two and a fixed point iterative algorithm for any positive $\mu$ and spatial dimension not greater than three. 
    \item In Section \ref{sec:Numerics2}, we provide numerical illustrations of the previous algorithms.
    \item In Section \ref{sec:Conclusions}, we present our conclusions and give some additional comments.
\end{itemize}

\section{The linear case} \label{sec:Linear}

Let us consider an integer $N \geqslant 1,$ and an open bounded set $\Omega \subset \mathds{R}^N$ with smooth boundary $\Gamma = \partial\Omega.$ We write $Q := \Om \times \left(0,T\right)$ and $\Sigma := \partial\Om \times \left(0,T\right).$ We assume that $\mathcal{O}_1$ and $\mathcal{O}_2$ are two open subsets of $\Omega$ satisfying $\mathcal{O}_1 \cap \mathcal{O}_2 \neq \emptyset$ and we act through a distributed control spatially supported by an open subset $\om$ of $\Om.$  To discard trivial cases, let us assume henceforth that
$$
\mathcal{O}_1\neq \mathcal{O}_2 \text{ and } \mathcal{O}_1 \Delta \mathcal{O}_2 := \left( \mathcal{O}_1\backslash \mathcal{O}_2 \right) \cup \left( \mathcal{O}_2\backslash \mathcal{O}_1 \right) \subset \Omega\setminus \om.
$$

As usual, the symbol $C$ will stand for a generic positive constant. The norm and scalar product in $L^2(\Omega)$ will be respectively denoted by $\|\cdot\|$ and $(\cdot,\cdot)$.

We will assume that $v \in L^2(\omega\times(0,T))$ and $u_0 \in L^2(\Omega).$ In this section, we are concerned with the initial-boundary problem
\begin{equation}\label{pareto-1}
\begin{cases}
        u_t -\Delta{u}= v \mathds{1}_{\omega}, \quad &\mbox{in} \ \Omega\times(0,T), \\
        u = 0, \quad &\mbox{on} \ \partial\Omega\times(0,T),\\
        u(0) = u_0,\quad &\mbox{in} \ \Omega,
\end{cases}
\end{equation}
where we denoted by $\mathds{1}_{\omega}$ the characteristic function of $\omega$ and $u=u(x,t)$ is the unknown state variable.

The following result is well known (see for instance \cite{evans10}):

\begin{lemma}\label{lemma2.1}
Given $v \in L^2\left( \om \times \left(0,T\right) \right)$ and $u_0 \in L^2(\Om),$ there exists exactly one function $u \in C^0([0,T];L^2(\Om))\cap L^2(0,T;H^1_0(\Om)),$ with $u_t \in L^2(0,T;H^{-1}(\Om)),$ which is a weak solution to \eqref{pareto-1}. Furthermore,
$$
\|u\|_{L^\infty(0,T;L^2(\Om))} + \|u\|_{L^2(0,T;H^1_0(\Om))} + \|u_t\|_{L^2(0,T;H^{-1}(\Om))} \leqslant C\left( \|u_0\| + \|v\|_{L^2\left(\om \times\left(0,T\right)\right)} \right).
$$
\end{lemma}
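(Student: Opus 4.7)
The plan is to use a standard Faedo–Galerkin argument. Let $\{w_k\}_{k\geq 1}\subset H^1_0(\Omega)$ be an orthonormal basis of $L^2(\Omega)$ consisting of eigenfunctions of $-\Delta$ with homogeneous Dirichlet conditions. For each $m\geq 1$, I would look for an approximate solution of the form $u_m(t)=\sum_{k=1}^m d_k^m(t)\,w_k$ solving the ODE system obtained by testing \eqref{pareto-1} against each $w_k$ with initial datum $d_k^m(0)=(u_0,w_k)$. Local existence is immediate by the Cauchy–Lipschitz theorem for linear ODE systems, and global existence on $[0,T]$ follows once we have the a priori estimate below.

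Next I would establish the basic energy estimate. Testing the Galerkin equation with $u_m$ yields
\begin{equation*}
\tfrac{1}{2}\tfrac{d}{dt}\|u_m\|^2+\|\nabla u_m\|^2=\int_\Omega v\,\mathds{1}_\omega\,u_m\,dx\leqslant \tfrac{1}{2}\|v\|_{L^2(\omega\times(0,T))}^2+\tfrac{1}{2}\|u_m\|^2.
\end{equation*}
Integrating over $(0,t)$, using $\|u_m(0)\|\leqslant\|u_0\|$, and applying Gronwall gives uniform bounds for $u_m$ in $L^\infty(0,T;L^2(\Omega))\cap L^2(0,T;H^1_0(\Omega))$ controlled by $\|u_0\|+\|v\|_{L^2(\omega\times(0,T))}$. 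To estimate the time derivative, for any $\varphi\in H^1_0(\Omega)$ with $\|\varphi\|_{H^1_0}\leqslant 1$ I would split $\varphi=\varphi^1+\varphi^2$ with $\varphi^1\in\mathrm{span}\{w_1,\dots,w_m\}$ and $\varphi^2$ orthogonal to it; then $\langle (u_m)_t,\varphi\rangle=-(\nabla u_m,\nabla\varphi^1)+(v\mathds{1}_\omega,\varphi^1)$, giving $\|(u_m)_t\|_{H^{-1}}\leqslant \|\nabla u_m\|+\|v\mathds{1}_\omega\|$, which is in $L^2(0,T)$ uniformly in $m$.

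I would then pass to the limit along a subsequence: weak-$*$ convergence in $L^\infty(0,T;L^2(\Omega))$, weak convergence in $L^2(0,T;H^1_0(\Omega))$, and weak convergence of $(u_m)_t$ in $L^2(0,T;H^{-1}(\Omega))$. The Aubin–Lions lemma gives strong convergence in $L^2(0,T;L^2(\Omega))$, which is more than enough to pass to the limit in the linear Galerkin weak formulation and obtain a weak solution $u$ of \eqref{pareto-1}. The continuity $u\in C^0([0,T];L^2(\Omega))$ is then a direct consequence of the Lions–Magenes embedding $L^2(0,T;H^1_0)\cap H^1(0,T;H^{-1})\hookrightarrow C^0([0,T];L^2)$, and the same embedding permits the identification $u(0)=u_0$ via integration by parts in time against functions vanishing at $t=T$.

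For uniqueness, if $u$ and $\tilde u$ are two such solutions, then $w:=u-\tilde u$ satisfies the homogeneous problem with $w(0)=0$; testing with $w\in L^2(0,T;H^1_0)$ and using $\tfrac{d}{dt}\|w\|^2=2\langle w_t,w\rangle$ gives $\|w(t)\|^2+2\int_0^t\|\nabla w\|^2\,ds=0$, hence $w\equiv 0$. The quantitative estimate is inherited from the uniform Galerkin bounds by weak lower semicontinuity of the norms. The only mildly delicate point is the $u_t$ bound and the rigorous use of $\tfrac{d}{dt}\|u\|^2=2\langle u_t,u\rangle$, both of which are handled by the Lions–Magenes framework; the rest is routine and I would simply cite \cite{evans10} for these technical ingredients, as the lemma statement already suggests.
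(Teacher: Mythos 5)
Your Faedo--Galerkin argument is correct and complete in all essentials (the only quibble is the pointwise-in-time bound $\int_\Omega v\,\mathds{1}_\omega u_m\,dx\leqslant\tfrac12\|v(t)\|^2_{L^2(\omega)}+\tfrac12\|u_m(t)\|^2$, which should carry the norm of $v(t)$ over $\omega$ rather than the space-time norm before integrating, a harmless slip). The paper gives no proof and simply cites \cite{evans10}, whose treatment of linear parabolic problems is precisely this Galerkin construction with the same energy estimate, time-derivative bound, and Lions--Magenes continuity argument, so your route is the one the authors intend.
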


We fix two uncontrolled trajectories $u_1$ and $u_2$ of \eqref{pareto-1}, initiated at distinct states $u_{0,1}$ and $u_{0,2},$ that is,
\begin{equation}\label{pareto-2}
  \begin{cases}
    (u_i)_t -\Delta{u_i}= 0, \quad &\mbox{in} \ \Omega\times(0,T), \\
    u_i = 0, \quad &\mbox{on} \ \partial\Omega\times(0,T),\\
    u_i(0) = u_{0i},\quad &\mbox{in} \ \Omega 
  \end{cases}
\end{equation}
for $i=1,2$ and
$$
u_{0,1} \not\equiv u_{0,2}.
$$

The considered objective functionals are those in \eqref{def:ObjFuncs}, with $u_i^T := u_i(\cdot,T).$ Intuitively, for each $i,$ the functional $J_i$ is constructed as follows: it measures the distance from the controlled state $u$ at the terminal time to the corresponding uncontrolled trajectory final value plus some additional penalization related to the cost of the control. Thanks to the constant $\mu$ we can model the performance-cost trade-off. 

Recall that we assume that the admissible set $\mathcal{U}$ is closed in $L^2\left( \om \times \left(0,T\right)\right)$ and convex. We will denote by $P_\mathcal{U}:\ L^2(\om\times (0,T))\mapsto \mathcal{U}$ the associated orthogonal projector. We remark that $J_1$ and $J_2$ are both convex, and that the convexity is strict as long as $\mu>0.$ Moreover, if $\mu>0$ and $\mathcal{U}$ is unbounded in $L^2\left( \om \times \left(0,T\right) \right),$ we have
$$
J_i(v) \rightarrow +\infty\ \text{as}\ \|v\|_{L^2\left( \om \times \left(0,T\right) \right)} \rightarrow +\infty.
$$

We now turn to the investigation of Pareto optimality. Note that we implicitly assume in Definition \ref{def:ParetoQuasiEq} that $J_1$ and $J_2$ are differentiable, in a suitable sense. Presently, this is the case, as is well-known from standard optimal control theory. Indeed, for every $v,w \in \mathcal{U},$ we have $v+ \epsilon(w-v) \in \mathcal{U},$ as long as $0 < \epsilon \leqslant 1,$ and, furthermore,
\begin{align}\label{pareto-3}
	\begin{split}
		\left\langle J_i^\prime(v),w -v \right\rangle &= \lim_{\epsilon \to 0} \frac{1}{\epsilon}\left[ J_i(v + \epsilon(w-v)) - J_i(\hat{v}) \right] \\
		&=\int_{\mathcal{O}_i} (u(T) - u_i(T))z(T) \,dx + \mu\iint_{\om \times \left(0,T\right) } v (w-v)\,dx\,dt \\
		&= \int_{\omega\times(0,T)} \left(\varphi_i + \mu v \right)(w-v) \ dxdt ,
	\end{split}
\end{align}
where $z$ is the solution of the linearized equation,
\begin{equation}\label{pareto-4}
  \begin{cases}
    z_t -\Delta{z}= (w - v)\mathds{1}_{\omega}, \quad &\mbox{in} \ \Omega\times(0,T), \\
    z = 0, \quad &\mbox{on} \ \partial\Omega\times(0,T),\\
    z(0) = 0,\quad &\mbox{in} \ \Omega,
  \end{cases}
\end{equation}
whereas, for $i \in \left\{ 1,2 \right\},$ the function $\varphi_i$ solves the adjoint equation
\begin{equation}\label{pareto-5}
\begin{cases}
    -\varphi_{i,t} -\Delta{\varphi_i}= 0, \quad &\mbox{in} \ \Omega\times(0,T), \\
    \varphi_i = 0, \quad &\mbox{on} \ \partial\Omega\times(0,T),\\
    \varphi_i(T) = (u(T) - u_i(T))\mathds{1}_{\mathcal{O}_i} ,	\quad &\mbox{in} \ \Omega.
\end{cases}
\end{equation}
This fact promptly implies a characterization of the gradient $J_i^\prime$ of $J_i,$ and also of Pareto quasi-equilibria:
\begin{corollary} \label{cor:CharacGrads_Linear}
    Let us suppose $\mathcal{U} = L^2\left( \om \times \left(0,T\right) \right).$ Then, for each $i \in \left\{1,2\right\},$ the gradient of $J_i$ at $v \in L^2\left( \om \times \left(0,T\right)\right)$ is given by
    $$
    J_i^\prime(v) = \left.{\varphi_i}\right|_{\om \times (0,T)} + \mu v,
    $$ 
    where $\varphi_i$ solves \eqref{pareto-5}. 
\end{corollary}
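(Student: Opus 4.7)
The plan is to identify the gradient of $J_i$ directly from the Gateaux derivative computation in \eqref{pareto-3}, using the adjoint-state duality to rewrite the terminal term. Starting from \eqref{def:ObjFuncs}, a direct calculation yields
$$
\langle J_i'(v), w-v\rangle = \int_{\mathcal{O}_i}(u(T)-u_i(T))z(T)\, dx + \mu \iint_{\omega \times (0,T)} v(w-v)\, dx\, dt,
$$
where $z$ solves the linearized problem \eqref{pareto-4}. The heart of the argument is then to establish the identity
$$
\int_{\mathcal{O}_i}(u(T)-u_i(T))z(T)\, dx = \iint_{\omega \times (0,T)} \varphi_i (w-v)\, dx\, dt.
$$
This follows by multiplying the adjoint equation \eqref{pareto-5} by $z$, multiplying the linearized state equation \eqref{pareto-4} by $\varphi_i$, subtracting, and integrating over $\Omega \times (0,T)$. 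The spatial boundary contributions on $\partial\Omega$ vanish by the homogeneous Dirichlet conditions; the temporal boundary terms produce $\int_{\mathcal{O}_i}(u(T)-u_i(T))z(T)\, dx$ on one side (using $\varphi_i(T) = (u(T)-u_i(T))\mathds{1}_{\mathcal{O}_i}$ and $z(0)=0$) and $\iint_{\omega \times (0,T)} \varphi_i (w-v)\, dx\, dt$ on the other (using that the source of the $z$-equation is $(w-v)\mathds{1}_{\omega}$).

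Combining these ingredients yields
$$
\langle J_i'(v), w-v\rangle = \iint_{\omega \times (0,T)} \left(\varphi_i + \mu v\right)(w-v)\, dx\, dt, \quad \text{for all } w \in \mathcal{U}.
$$
Under the hypothesis $\mathcal{U} = L^2(\omega \times (0,T))$, the vector $w-v$ ranges over the entire Hilbert space as $w$ varies. Since the $L^2$ inner product identifies this space with its own dual, the Riesz representation theorem forces $J_i'(v) = \left.\varphi_i\right|_{\omega \times (0,T)} + \mu v$, which is the claimed formula.

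I expect no genuine obstacle beyond securing the regularity needed to justify the integration by parts, which is furnished by Lemma \ref{lemma2.1} applied to \eqref{pareto-4} and to the time-reversed form of \eqref{pareto-5} (a well-posed forward parabolic problem with $L^2(\Omega)$ initial datum). With this in hand, the remainder is a routine identification through the $L^2$ duality.
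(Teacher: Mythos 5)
Your argument is correct and follows essentially the same route as the paper: the paper establishes the identity $\left\langle J_i^\prime(v),w-v\right\rangle = \iint_{\omega\times(0,T)}(\varphi_i+\mu v)(w-v)\,dx\,dt$ in \eqref{pareto-3} via exactly the adjoint-state duality you describe, and then reads off the gradient by Riesz representation since $\mathcal{U}=L^2(\omega\times(0,T))$. Your write-up merely makes explicit the integration-by-parts step that the paper leaves implicit.
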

From the general form \eqref{pareto-3} of the G\^ateaux derivatives, we provide a characterization of Pareto quasi-equilibria:
\begin{corollary} \label{cor:Charact_ParetoQuasiEq_Linear}
    If $\mu>0,$ a control $\hat{v}\in \mathcal{U}$ is a Pareto quasi-equilibrium if, and only if, there exists $\alpha \in \left[0,1\right]$ such that
    \begin{equation}\label{paretoeq_linear_mupos}
        \hat{v} = P_\mathcal{U} \left( - \frac{1}{\mu} \left[ \alpha\varphi_1 + (1-\alpha )\varphi_2 \right]\bigg|_{\om \times (0,T)} \right).
   \end{equation}
    If $\mu=0,$ this happens if, and only if, one has
    \begin{equation}\label{paretoeq_linear_mu0}
        P_\mathcal{U} \left(\left[\alpha\varphi_1 + (1-\alpha )\varphi_2\right]\bigg|_{\om \times (0,T)} \right) = 0.
    \end{equation}
\end{corollary}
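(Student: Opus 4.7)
The plan is to apply Definition \ref{def:ParetoQuasiEq} directly, insert the G\^ateaux derivative formula \eqref{pareto-3} for $J_i'(\hat{v})$, and then recognize the resulting variational inequality as the first-order characterization of orthogonal projection onto the closed convex set $\mathcal{U} \subset L^2(\omega\times(0,T))$.

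First, $\hat{v} \in \mathcal{U}$ is a Pareto quasi-equilibrium iff there exists $\alpha \in [0,1]$ such that $\langle \alpha J_1'(\hat{v}) + (1-\alpha)J_2'(\hat{v}),\, w-\hat{v}\rangle \geq 0$ for every $w \in \mathcal{U}$. Substituting \eqref{pareto-3} with $v=\hat{v}$ and combining the two expressions linearly with weights $\alpha$ and $1-\alpha$ yields the master inequality
\[
\int_{\omega\times(0,T)} \bigl[\alpha\varphi_1 + (1-\alpha)\varphi_2 + \mu\hat{v}\bigr](w-\hat{v})\,dx\,dt \,\geq\, 0 \quad \text{for all } w\in\mathcal{U},
\]
where $\varphi_1,\varphi_2$ are the adjoint states solving \eqref{pareto-5} associated with the state $u$ produced by \eqref{pareto-1} driven by $\hat{v}$.

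For $\mu>0$, I divide the master inequality by $\mu$ and rearrange to
\[
\bigl\langle \hat{v} - g,\, w - \hat{v} \bigr\rangle_{L^2(\omega\times(0,T))} \,\geq\, 0 \quad \text{for all } w\in\mathcal{U},
\]
with $g := -\tfrac{1}{\mu}\bigl[\alpha\varphi_1 + (1-\alpha)\varphi_2\bigr]\big|_{\omega\times(0,T)}$. Since $\mathcal{U}$ is a closed convex subset of the Hilbert space $L^2(\omega\times(0,T))$, this is precisely the variational characterization of $\hat{v}=P_\mathcal{U}(g)$, i.e.\ \eqref{paretoeq_linear_mupos}; the converse direction reads the same equivalence backwards. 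For $\mu=0$, the master inequality becomes $\int_{\omega\times(0,T)} [\alpha\varphi_1 + (1-\alpha)\varphi_2](w-\hat{v})\,dx\,dt \geq 0$ for all $w\in\mathcal{U}$, and the analogous projection-characterization argument produces \eqref{paretoeq_linear_mu0}.

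No substantive obstacle arises: both directions follow from the variational characterization of the orthogonal projection together with the explicit gradient formula \eqref{pareto-3}. The only subtle point to flag is that $\varphi_1$ and $\varphi_2$ themselves depend on $\hat{v}$ through the terminal condition of \eqref{pareto-5}, so \eqref{paretoeq_linear_mupos} and \eqref{paretoeq_linear_mu0} are coupled (fixed-point-type) identities rather than explicit formulas for $\hat{v}$; this coupling is what will motivate the iterative schemes of Section \ref{sec:Numerics1}.
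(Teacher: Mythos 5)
Your $\mu>0$ argument is exactly the intended one: the paper offers no written proof of this corollary beyond the remark that it follows from \eqref{pareto-3}, and substituting the gradient formula into Definition \ref{def:ParetoQuasiEq} to get the master inequality $\int_{\omega\times(0,T)}\bigl[\alpha\varphi_1+(1-\alpha)\varphi_2+\mu\hat{v}\bigr](w-\hat{v})\,dx\,dt\geq 0$ and then recognizing the variational characterization of $P_\mathcal{U}$ is precisely how \eqref{paretoeq_linear_mupos} is meant to be obtained. That half of your proof is complete and correct, and your closing remark about the fixed-point nature of the identity is apt.

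The $\mu=0$ case, however, is not settled by an ``analogous projection-characterization argument,'' and this is a genuine gap. Write $\tilde\varphi:=\alpha\varphi_1+(1-\alpha)\varphi_2$. Your master inequality at $\mu=0$ reads $(\tilde\varphi,\,w-\hat{v})\geq 0$ for all $w\in\mathcal{U}$, i.e.\ $-\tilde\varphi$ belongs to the normal cone of $\mathcal{U}$ at the point $\hat{v}$ (equivalently, $\hat{v}$ minimizes the linear functional $w\mapsto(\tilde\varphi,w)$ over $\mathcal{U}$). The condition $P_\mathcal{U}(\tilde\varphi)=0$ is a different statement: it requires $0\in\mathcal{U}$ and $(\tilde\varphi-0,\,w-0)\leq 0$ for all $w\in\mathcal{U}$ — note the reversed sign and the basepoint $0$ in place of $\hat{v}$. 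The two coincide when $\mathcal{U}$ is all of $L^2(\omega\times(0,T))$ or, more generally, a closed subspace (both then reduce to $\tilde\varphi\perp\mathcal{U}$), but not for a general closed convex $\mathcal{U}$: for $\mathcal{U}$ a closed ball centered at the origin and $\tilde\varphi\neq 0$, the point $\hat{v}=-R\tilde\varphi/\|\tilde\varphi\|$ satisfies the master inequality while $P_\mathcal{U}(\tilde\varphi)\neq 0$. So at minimum you must either restrict to the class of $\mathcal{U}$ for which the equivalence holds (which is how the corollary is actually used later, in Theorem \ref{thm:mu0case_linear}, where only the implication ``quasi-equilibrium $\Rightarrow$ $P_\mathcal{U}(\tilde\varphi|_{\omega\times(0,T)})=0$, hence $\tilde\varphi\equiv 0$ by the continuation property'' is needed), or state and prove the $\mu=0$ characterization in the normal-cone form that your master inequality genuinely delivers.
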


We now provide results that relate the notions of Pareto equilibria and quasi-equilibria:
\begin{proposition} \label{prop:ParetoEqToQuasiEq}
If $\hat{v}$ is a Pareto equilibrium, then it is a Pareto quasi-equilibrium.
\end{proposition}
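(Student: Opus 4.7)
The plan is to argue by a finite-dimensional separation theorem applied to the image in $\mathbb{R}^2$ of the admissible set $\mathcal{U}$ under the pair of G\^ateaux derivatives at $\hat v$. Concretely, I would set
$$
C := \left\{ \bigl( \langle J_1'(\hat v), w - \hat v \rangle, \langle J_2'(\hat v), w - \hat v \rangle \bigr) : w \in \mathcal{U} \right\} \subset \mathbb{R}^2,
$$
and observe that, since $\mathcal{U}$ is convex and both coordinates are linear functions of $w$, the set $C$ is convex; it also contains $0$ (take $w = \hat v$).

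The first key step is to show that $C$ is disjoint from the open negative orthant $N := (-\infty,0) \times (-\infty,0)$. Suppose, for contradiction, there exists $w \in \mathcal{U}$ with $\langle J_i'(\hat v), w - \hat v\rangle < 0$ for $i=1,2$. By convexity of $\mathcal{U}$, the controls $v_\epsilon := \hat v + \epsilon(w - \hat v)$ belong to $\mathcal{U}$ for $\epsilon \in (0,1]$. Using the differentiability established in \eqref{pareto-3}, one has
$$
J_i(v_\epsilon) - J_i(\hat v) = \epsilon \langle J_i'(\hat v), w - \hat v \rangle + o(\epsilon),
$$
so for $\epsilon>0$ small enough, $J_i(v_\epsilon) < J_i(\hat v)$ for \emph{both} $i = 1,2$, contradicting the Pareto-equilibrium property of $\hat v$.

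Next, since $C$ and $N$ are convex and $N$ is open, I would invoke the (finite-dimensional) Hahn-Banach separation theorem to produce $(\alpha_0, \beta_0) \in \mathbb{R}^2 \setminus \{0\}$ and $\gamma \in \mathbb{R}$ such that $\alpha_0 c_1 + \beta_0 c_2 \geqslant \gamma \geqslant \alpha_0 n_1 + \beta_0 n_2$ for every $c \in C$ and $n \in N$. Because $N$ is unbounded below in each coordinate, the right-hand inequality forces $\alpha_0, \beta_0 \geqslant 0$; letting $n \to 0$ in $\bar N$ yields $\gamma \geqslant 0$, and since $0 \in C$ one also has $\gamma \leqslant 0$, hence $\gamma = 0$. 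Setting $\alpha := \alpha_0/(\alpha_0 + \beta_0) \in [0,1]$, the inequality $\alpha_0 c_1 + \beta_0 c_2 \geqslant 0$ for all $c \in C$ translates exactly into
$$
\langle \alpha J_1'(\hat v) + (1-\alpha) J_2'(\hat v), w - \hat v \rangle \geqslant 0 \qquad \forall w \in \mathcal{U},
$$
which is the condition of Definition \ref{def:ParetoQuasiEq}.

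The only delicate point I anticipate is the strictness in the Pareto-equilibrium contradiction: Definition \ref{def:ParetoEq} only requires one strict inequality among the two, but here we already have strict inequality for \emph{both} $J_1$ and $J_2$ at $v_\epsilon$, so the contradiction is immediate; no extra care is needed. Everything else reduces to elementary convexity and two-dimensional separation, so the remaining work is largely bookkeeping.
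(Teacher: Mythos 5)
Your proof is correct, and it takes a genuinely different route from the paper's. The paper recasts Pareto optimality as the scalar constrained problem of minimizing $J_1$ over $\left\{ v \in \mathcal{U} : J_2(v) \leqslant J_2(\hat v) \right\}$ and invokes the Kuhn--Tucker theorem to produce multipliers $\lambda_1,\lambda_2$, not both zero, with $\lambda_2/\lambda_1 \geqslant 0$ when $\lambda_1 \neq 0$; normalizing gives the $\alpha \in [0,1]$ of Definition \ref{def:ParetoQuasiEq}. You instead give a direct two-dimensional scalarization argument: the image $C$ of $\mathcal{U}$ under the affine map $w \mapsto \bigl( \langle J_1'(\hat v), w-\hat v\rangle, \langle J_2'(\hat v), w-\hat v\rangle \bigr)$ is convex and, by first-order expansion along the admissible segment $\hat v + \epsilon(w-\hat v)$, must miss the open negative orthant, after which Hahn--Banach separation in $\mathbb{R}^2$ yields nonnegative weights summing to a positive number. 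Your version is self-contained (no external optimization theorem), it produces the variational inequality of Definition \ref{def:ParetoQuasiEq} directly for a general closed convex $\mathcal{U}$ rather than the equation $\alpha J_1'(\hat v) + (1-\alpha) J_2'(\hat v) = 0$ that the paper's argument most naturally delivers in the unconstrained case, and it isolates exactly where the Pareto property enters (ruling out a direction of simultaneous strict descent). The paper's route is shorter on the page but delegates the convex-analytic work to the cited Kuhn--Tucker result. One cosmetic remark: the coordinates of your map are affine, not linear, in $w$; the image of a convex set under an affine map is still convex, so nothing is lost.
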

\begin{proof}
If $\hat{v}$ is a Pareto equilibrium, then
$$
J_1\left( \hat{v} \right) \leqslant J_1\left( v \right),
$$
for every $v \in \mathcal{U}$ such that $J_2\left( v \right) \leqslant E_2 := J_2\left( \hat{v} \right).$ From the Kuhn-Tucker Criterion, see \cite[Theorems 9.2-3,4]{ciarlet1989introduction}, we deduce that there exist $\lambda_1,\lambda_2 \in \mathds{R},$ not both zero, such that 
$$
\lambda_1 J_1^\prime \left( \hat{v} \right) + \lambda_2 J_1^\prime\left( \hat{v} \right) = 0.
$$
Moreover, if $\lambda_1 \neq 0,$ then $\lambda_2 / \lambda_1 \geqslant 0.$ Hence, $\hat{v}$ is a Pareto quasi-equilibrium.
\end{proof}

\begin{proposition} \label{prop:ParetoQuasiEqToEq}
If $\hat{v}$ is a Pareto quasi-equilibrium and $\alpha \in \left(0,1\right),$ then it is a Pareto equilibrium. If we assume that $\mu > 0,$ then the result also holds for $\alpha=0$ and $\alpha = 1.$
\end{proposition}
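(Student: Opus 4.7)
The plan is to pass through the convex combination of the two objective functionals, namely $K_\alpha(v) := \alpha J_1(v) + (1-\alpha) J_2(v)$. Since $J_1$ and $J_2$ are both convex, so is $K_\alpha$; moreover, whenever $\mu>0$, each $J_i$ is strictly convex, and hence $K_\alpha$ is strictly convex for every $\alpha\in [0,1]$. Because $\mathcal{U}$ is convex and $\hat{v}$ is a Pareto quasi-equilibrium, the inequality
$$
\left\langle K_\alpha^\prime(\hat{v}), w-\hat{v} \right\rangle = \left\langle \alpha J_1^\prime(\hat{v}) + (1-\alpha) J_2^\prime(\hat{v}),w-\hat{v} \right\rangle \geqslant 0
$$
holds for every $w\in \mathcal{U}$. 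Combined with convexity, this is precisely the first-order variational characterization of a minimizer of $K_\alpha$ over $\mathcal{U}$; so $\hat{v}$ minimizes $K_\alpha$ on $\mathcal{U}$.

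For the case $\alpha \in (0,1)$, I would argue by contradiction. Suppose some $v\in \mathcal{U}$ satisfies $J_1(v)\leqslant J_1(\hat{v})$ and $J_2(v)\leqslant J_2(\hat{v})$ with at least one strict inequality. Since both weights $\alpha$ and $1-\alpha$ are strictly positive, multiplying and adding gives the strict inequality
$$
K_\alpha(v) = \alpha J_1(v) + (1-\alpha) J_2(v) < \alpha J_1(\hat{v}) + (1-\alpha)J_2(\hat{v}) = K_\alpha(\hat{v}),
$$
contradicting the fact that $\hat{v}$ minimizes $K_\alpha$ on $\mathcal{U}$. Hence $\hat{v}$ is a Pareto equilibrium.

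For the endpoint cases $\alpha=0$ or $\alpha=1$ under the extra assumption $\mu>0$, the previous trick fails because one of the weights vanishes, so we need to exploit the strict convexity granted by $\mu>0$. Consider, say, $\alpha=1$; then $\hat{v}$ is the (unique) minimizer of $J_1$ over $\mathcal{U}$. If some $v \in \mathcal{U}$ satisfied $J_1(v)\leqslant J_1(\hat{v})$ and $J_2(v)\leqslant J_2(\hat{v})$ with at least one strict inequality, the minimality of $\hat{v}$ would force $J_1(v)=J_1(\hat{v})$, and the strict convexity of $J_1$ would yield $v=\hat{v}$, making both inequalities equalities and contradicting the strict one. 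The case $\alpha=0$ is identical with $J_2$ in place of $J_1$.

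The only mildly delicate step is the boundary analysis, where we must not forget that the Pareto definition allows the strict improvement to occur in the index whose weight is zero; it is precisely the strict convexity obtained from $\mu>0$ that rules this out, which is why the endpoint statement requires the extra hypothesis $\mu>0$ and is expected to fail without it.
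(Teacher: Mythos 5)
Your proof is correct and follows essentially the same route as the paper: both arguments pass through the weighted functional $\alpha J_1+(1-\alpha)J_2$, derive a strict decrease from a hypothetical Pareto-dominating control when $\alpha\in(0,1)$, and handle the endpoints via the strict convexity coming from $\mu>0$, which makes $\hat{v}$ the unique minimizer of the surviving $J_i$. The only difference is one of explicitness — you spell out the convexity step that upgrades the variational inequality to global minimality of the weighted functional, which the paper leaves implicit.
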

\begin{proof}
Let us assume that $\hat{v} \in L^2\left( \om \times \left(0,T\right) \right)$ is not a Pareto equilibrium. Then, for some $v \in \mathcal{U},$ one has
$$
J_i\left( v \right) \leqslant J_i\left( \hat{v} \right),\, i\in \left\{1,2\right\},
$$
one of the inequalities being strict. Therefore, since $\alpha \in \left(0,1\right),$ one has
$$
\alpha J_1\left( v \right) + \left(1 - \alpha \right) J_2\left( v \right) < \alpha J_1\left( \hat{v} \right) + \left(1 - \alpha \right) J_2\left( \hat{v} \right).
$$
Consequently, $\hat{v}$ cannot be a Pareto quasi-equilibrium corresponding to this value of $\alpha.$ 

Assuming that $\mu > 0,$ we see that a Pareto quasi-equilibrium $\hat{v}$ corresponding to $\alpha = 0$ (respectively, $\alpha = 1$) must be the unique minimizer of $J_2$ (respectively, $J_1$). Thus, $\hat{v}$ is a Pareto equilibrium. 
\end{proof}

We now turn to the analysis of Pareto quasi-equilibria under the assumption that $\mu = 0.$
\begin{theorem} \label{thm:mu0case_linear}
    Let us assume that $\mu = 0$ and $\mathcal{U}$ has the following continuation property:
    $$
    P_{\mathcal{U}}\left( \varphi \right) = 0 \text{ implies } \varphi = 0 \text{ a.e. in } Q,
    $$
    for every solution $\varphi$ of a homogeneous linear heat equation. Then, there cannot exist a Pareto equilibrium $\hat{v}$ such that $J_1\left( \hat{v} \right) > 0$ and $J_2\left( \hat{v} \right) > 0.$ On the other hand, if 
    $$
    \mathcal{U} \supseteq B_R := \left\{ v \in L^2\left(\om \times \left(0,T\right) \right): \|v\|_{L^2\left(\om \times \left(0,T\right) \right)} \leqslant R \right\}
    $$ 
    for some $R>0$ that is large enough, there exist Pareto equilibria $\hat{v}$ with $J_1\left( \hat{v} \right) = 0$ or $J_2\left(\hat{v}\right) = 0.$ 
\end{theorem}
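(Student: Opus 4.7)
The plan is to argue by contradiction for the non-existence part and by explicit construction (via null-controllability) for the existence part. Assume $\hat{v}\in\mathcal{U}$ is a Pareto equilibrium with $J_1(\hat{v}),J_2(\hat{v})>0$. Proposition \ref{prop:ParetoEqToQuasiEq} and the $\mu=0$ branch of Corollary \ref{cor:Charact_ParetoQuasiEq_Linear} yield $\alpha\in[0,1]$ such that $P_{\mathcal{U}}(\psi|_{\om\times(0,T)})=0$ for $\psi:=\alpha\varphi_1+(1-\alpha)\varphi_2$, a solution of the backward homogeneous heat equation. The continuation hypothesis promotes this to $\psi\equiv 0$ in $Q$; evaluating at $t=T$ gives
$$
\alpha(u(T)-u_1(T))\mathds{1}_{\mathcal{O}_1}+(1-\alpha)(u(T)-u_2(T))\mathds{1}_{\mathcal{O}_2}=0 \text{ a.e.\ in }\Omega.
$$
The boundary cases $\alpha\in\{0,1\}$ immediately force $J_2(\hat{v})=0$ or $J_1(\hat{v})=0$, a contradiction. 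For $\alpha\in(0,1)$ we obtain $u(T)=u_1(T)$ on $\mathcal{O}_1\setminus\mathcal{O}_2$ and $u(T)=u_2(T)$ on $\mathcal{O}_2\setminus\mathcal{O}_1$, both of which lie in $\Omega\setminus\om$ by the standing assumption; by interior parabolic regularity $u(\cdot,T)$ is real analytic in $x$ on $\Omega\setminus\overline{\om}$ while each $u_i(\cdot,T)$ is analytic throughout $\Omega$. Analytic continuation through the connected components of $\Omega\setminus\overline{\om}$, coupled with the identity $u(T)=\alpha u_1(T)+(1-\alpha)u_2(T)$ on $\mathcal{O}_1\cap\mathcal{O}_2$ and backward uniqueness for the heat equation, then forces $u_{0,1}\equiv u_{0,2}$, contradicting the standing hypothesis.

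For the second half, null-controllability of the linear heat equation produces $v^\star\in L^2(\om\times(0,T))$ driving $u-u_1$ (initiated at $u_0-u_{0,1}$) to zero at $t=T$, with $\|v^\star\|\leq C(\Omega,\om,T)\|u_0-u_{0,1}\|$ coming from the standard observability constant. For $R$ exceeding this bound, $v^\star\in B_R\subseteq\mathcal{U}$, so that $K:=\{v\in\mathcal{U}:J_1(v)=0\}$ is nonempty, closed, and convex. I would then take $\hat{v}\in\operatorname{argmin}\{J_2(v):v\in K\cap B_R\}$, whose existence follows from the direct method (convexity, continuity, boundedness of $K\cap B_R$). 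Provided $R$ is further enlarged so that $K\cap B_R$ contains the global minimum of $J_2$ on $K$, the minimizer $\hat{v}$ is a Pareto equilibrium of the original problem: any Pareto-improver $v\in\mathcal{U}$ must satisfy $J_1(v)=0$, hence $v\in K\cap B_R$, and a strict improvement $J_2(v)<J_2(\hat{v})$ would contradict minimality. A symmetric construction exchanging $J_1$ and $J_2$ produces a Pareto equilibrium with $J_2(\hat{v})=0$.

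I expect the main obstacle to be the $\alpha\in(0,1)$ sub-case of the first half: the analytic-continuation argument runs smoothly when $\Omega\setminus\overline{\om}$ is connected and $(\mathcal{O}_1\cap\mathcal{O}_2)\setminus\overline{\om}\neq\emptyset$, but the degenerate configuration $\mathcal{O}_1\cap\mathcal{O}_2\subseteq\overline{\om}$ with $\mathcal{O}_1\setminus\mathcal{O}_2$ and $\mathcal{O}_2\setminus\mathcal{O}_1$ lying in distinct components of $\Omega\setminus\overline{\om}$ would require a supplementary non-reachability argument, exploiting the observability inequality to rule out the exact attainability of the prescribed trace of $u(T)$ on $\mathcal{O}_1\cup\mathcal{O}_2$. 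Quantifying the ``$R$ large enough'' assumption in the second half is a secondary technical point, again controlled by the observability constant applied to the minimum-norm $J_2$-minimizer on $K$.
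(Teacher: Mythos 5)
Your proof of the first assertion is essentially the paper's: reduce to a Pareto quasi-equilibrium via Proposition \ref{prop:ParetoEqToQuasiEq}, apply the $\mu=0$ branch of Corollary \ref{cor:Charact_ParetoQuasiEq_Linear} and the continuation hypothesis to get $\alpha\varphi_1+(1-\alpha)\varphi_2\equiv 0$, read off the terminal condition, dispose of $\alpha\in\{0,1\}$ directly, and for $\alpha\in(0,1)$ use analyticity of $u(T)$ in $\Omega\setminus\overline{\om}$ plus backward uniqueness to contradict $u_{0,1}\not\equiv u_{0,2}$. Your explicit caveat about connectedness of $\Omega\setminus\overline{\om}$ is a fair observation (the paper glosses over it too), but it does not change the substance.

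The second assertion is where you diverge, and where there is a genuine gap. You build $v^\star$ with $J_1(v^\star)=0$ and then minimize $J_2$ over $K\cap B_R$ with $K:=\{v\in\mathcal{U}:J_1(v)=0\}$. To promote that constrained minimizer to a Pareto equilibrium you need $K\cap B_R$ to contain a global minimizer of $J_2$ over all of $K$, and you propose to achieve this by ``further enlarging $R$.'' But with $\mu=0$ the functional $J_2$ has no control-norm penalty, hence no coercivity on the unbounded convex set $K$: a priori the infimum of $J_2$ over $K$ need not be attained at all, let alone inside some ball, so no choice of $R$ is guaranteed to work. As stated, your argument only shows that $\hat v$ cannot be Pareto-improved by controls in $K\cap B_R$, not by all of $\mathcal{U}$.

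The paper closes exactly this hole with one observation that makes your minimization step unnecessary: by the same analyticity/unique-continuation argument used in the first half, \emph{every} control $v$ with $J_1(v)=0$ yields the same value of $J_2$ (the state $u(T)$ agrees with $u_1(T)$ on $\mathcal{O}_1$, hence, by analytic continuation through $\Omega\setminus\overline{\om}$, on all of $\mathcal{O}_2$ up to the fixed discrepancy $u_1(T)-u_2(T)$, so $J_2(v)=\|u_1(T)-u_2(T)\|^2_{L^2(\mathcal{O}_2)}$ for all $v\in K$). Consequently any Pareto improver of $v^\star$ would have to satisfy $J_1=0$ and strictly decrease $J_2$, which is impossible since $J_2$ is constant on $K$; thus $v^\star$ itself is already a Pareto equilibrium. (The paper phrases this with the roles of $J_1$ and $J_2$ exchanged, driving the state to $u_2(T)$ and noting $J_1$ is constant on $\{J_2=0\}$.) If you insert this constancy observation, your construction collapses to the paper's and the gap disappears; note that it relies on the same connectivity caveat you already flagged for the first half.
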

\begin{proof}
From Corollary \ref{cor:Charact_ParetoQuasiEq_Linear}, we have that
$$
\widetilde{\varphi} := \alpha \varphi_1 + \left( 1- \alpha \right) \varphi_2
$$
must satisfy $P_{\mathcal{U}}\left( \widetilde{\varphi}|_{\om \times (0,T)} \right) = 0.$ Since $\widetilde{\varphi}$ solves a homogeneous heat equation, from the continuation property we have assumed, it follows that that $\widetilde{\varphi} \equiv 0.$ Therefore,
\begin{equation} \label{eq:MuZeroCondn}
    \alpha \left( u(T) - u_1(T) \right)\mathds{1}_{\mathcal{O}_1} + \left(1-\alpha\right) \left( u(T) - u_2(T) \right)\mathds{1}_{\mathcal{O}_2} = 0 \text{ a.e. in } \Om.
\end{equation}
If $\alpha = 0,$ then \eqref{eq:MuZeroCondn} reduces to $u(T) = u_2(T)$ a.e. in $\mathcal{O}_2;$ thus, we have $J_2\left( \hat{v} \right) = 0.$ Likewise, $\alpha = 1$ implies $u(T) = u_1(T)$ a.e. in $\mathcal{O}_1,$ whence it follows that $J_1\left( \hat{v} \right) = 0.$

Let us assume $\alpha \in \left( 0,1 \right).$ In this case, \eqref{eq:MuZeroCondn} gives 
$$
u(T) = u_1(T) \text{ a.e. in } \mathcal{O}_1 \setminus \mathcal{O}_2
$$
and
$$
u(T) = u_2(T) \text{ a.e. in } \mathcal{O}_2 \setminus \mathcal{O}_1.
$$
The fact that both $u_1(T)$ and $u_2(T)$ possess analytic versions in $\Omega \backslash \overline{\om}$ allows us to conclude that $u_1(T) = u_2(T),$ which is in contradiction with our assumptions. This proves the first part.

Now, let us assume that $\mathcal{U} \supseteq B_R$ for some $R>0$ such that there exists $\hat{v}_2 \in B_R$ whose associated state $\hat{u}$ satisfies $\hat{u}(T) = u_2(T).$ We then have $J_2\left(\hat{v}_2\right) =0,$ whereas $J_1(\hat{v}_2) = \|u_1(T) - u_2(T)\|^2_{L^2(\mathcal{O}_1)}$. Note that $J_1(v)=J_1(\hat{v})$ for any other control such that $J_2(v)=0$.Hence, $\hat{v}$ is a Pareto equilibrium satisfying $J_2(\hat{v}) = 0.$ An analogous construction provides a Pareto equilibrium $\hat{v}_1 \in L^2\left( \om \times \left(0,T\right) \right)$ such that $J_1(\hat{v}_1) = 0.$ 
\end{proof}

For $\mu > 0,$ the set of Pareto equilibria for our objective functional is richer. We describe it in the following result:
\begin{theorem}
    If $\mu > 0$ and $\mathcal{U} = L^2\left( \om \times \left(0,T\right) \right),$ there exists a family $\left\{ \hat{v}_\alpha \right\}_{\alpha \in \left[0,1\right]}$ of Pareto equilibria.
\end{theorem}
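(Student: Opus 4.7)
The plan is to parametrize the family by convex combinations. For each $\alpha \in [0,1]$, I would consider the auxiliary functional
$$
J_\alpha := \alpha J_1 + (1-\alpha) J_2
$$
on $\mathcal{U} = L^2(\omega \times (0,T))$ and seek its minimizer $\hat{v}_\alpha$. First I would note that $J_1$ and $J_2$ are continuous, convex, and (since $\mu > 0$) strictly convex and coercive on $L^2(\omega \times (0,T))$, as was already observed after \eqref{def:ObjFuncs}. Hence $J_\alpha$ inherits the same properties, and by the standard direct method of the calculus of variations (convexity plus coercivity plus weak lower semi-continuity in the reflexive Hilbert space $L^2(\omega \times (0,T))$) there exists a unique $\hat{v}_\alpha \in \mathcal{U}$ with $J_\alpha(\hat{v}_\alpha) = \inf_{v \in \mathcal{U}} J_\alpha(v)$.

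Next I would translate the minimality of $\hat{v}_\alpha$ into the Pareto quasi-equilibrium condition. Since $\hat v_\alpha$ minimizes $J_\alpha$ on the convex set $\mathcal{U}$, the classical first-order variational inequality gives
$$
\langle \alpha J_1'(\hat{v}_\alpha) + (1-\alpha) J_2'(\hat{v}_\alpha), w - \hat{v}_\alpha \rangle \geqslant 0 \quad \text{for all } w \in \mathcal{U},
$$
which is precisely Definition \ref{def:ParetoQuasiEq}. Thus $\hat{v}_\alpha$ is a Pareto quasi-equilibrium for every $\alpha \in [0,1]$.

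Finally I would invoke Proposition \ref{prop:ParetoQuasiEqToEq}: for $\alpha \in (0,1)$ it yields directly that $\hat{v}_\alpha$ is a Pareto equilibrium, and the same proposition covers the boundary cases $\alpha = 0$ and $\alpha = 1$ under the assumption $\mu > 0$. This produces the desired family $\{\hat{v}_\alpha\}_{\alpha \in [0,1]} \subset \mathcal{U}$ of Pareto equilibria.

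I do not expect a serious obstacle here: all the heavy lifting (characterization of Pareto quasi-equilibria, equivalence with Pareto equilibria when $\mu > 0$, coercivity and strict convexity of the $J_i$) has already been established earlier in the section. The only point that requires a little care is handling $\alpha \in \{0,1\}$, but this is exactly where the second assertion of Proposition \ref{prop:ParetoQuasiEqToEq} applies: at $\alpha = 1$, $\hat{v}_\alpha$ is the unique minimizer of $J_1$ and hence automatically a Pareto equilibrium, and symmetrically at $\alpha = 0$.
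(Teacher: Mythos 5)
Your argument is correct, but it follows a genuinely different route from the one the paper takes for this theorem. You scalarize: you minimize $J_{(\alpha)}=\alpha J_1+(1-\alpha)J_2$ directly, using the strict convexity and coercivity already recorded after \eqref{def:ObjFuncs}, read the first-order variational inequality as the quasi-equilibrium condition of Definition \ref{def:ParetoQuasiEq}, and then upgrade to a Pareto equilibrium via Proposition \ref{prop:ParetoQuasiEqToEq} (including the endpoints $\alpha\in\{0,1\}$, which you handle correctly through the unique-minimizer observation). The paper instead starts from the characterization \eqref{paretoeq_linear_mupos} (which, for $\mathcal{U}=L^2(\om\times(0,T))$, removes the projection), splits the state and adjoint states into control-independent and control-dependent parts, and reduces the quasi-equilibrium condition to the linear operator equation $\hat{v}+\frac{1}{\mu}\Lambda_\alpha(\hat{v})=-\frac{1}{\mu}f_\alpha$ in \eqref{pareto-14}, solved uniquely because $\Lambda_\alpha$ is compact and nonnegative. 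What each approach buys: yours is shorter, needs no decomposition, and is essentially the same strategy the paper itself deploys later for the semilinear problem (Proposition \ref{th.2}); the paper's version, besides proving existence and uniqueness, exhibits the optimality system as a symmetric positive-definite linear equation in $\hat{v}$, which is exactly the structure exploited by the conjugate gradient method of Section \ref{subsec:Linear_Algos}. One small caveat if your proof were to stand alone: the step from quasi-equilibrium to global minimizer of $J_{(\alpha)}$ (used implicitly inside Proposition \ref{prop:ParetoQuasiEqToEq}) relies on the convexity of $J_{(\alpha)}$, which holds here because the control-to-state map of \eqref{pareto-1} is affine; it is worth saying so explicitly, since this is the point that fails without extra hypotheses in the semilinear and bilinear settings.
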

\begin{proof}
By \eqref{paretoeq_linear_mupos}, we deduce that a control $\hat{v}$ is a Pareto equilibrium for $J_1$ and $J_2$ if, and only if, there exists $\alpha \in \left[ 0,1 \right]$ such that
\begin{equation}\label{Pareto-L}
 \hat{v}_\alpha = -\frac{1}{\mu}\left[ \alpha\varphi_1 + (1-\alpha)\varphi_2 \right]\bigg|_{\om \times (0,T)}.
\end{equation}
Let us put $u=\overline{u}+w$ and $\varphi_i = \overline{\varphi_i} + \psi_i$, where
\begin{equation}
    \begin{cases}
      \overline{u}_t -\Delta{\overline{u}}= 0, \quad &\mbox{in} \ \Omega\times(0,T), \\
      \overline{u} = 0, \quad &\mbox{on} \ \partial\Omega\times(0,T),\\
      \overline{u}(0) = u_0,\quad &\mbox{in} \ \Omega,
    \end{cases}
\end{equation}

\begin{equation}
    \begin{cases}
      -\overline{\varphi}_{i,t} -\Delta{\overline{\varphi_i}}= 0, \quad &\mbox{in} \ \Omega\times(0,T), \\
      \overline{\varphi_i} = 0, \quad &\mbox{on} \ \partial\Omega\times(0,T),\\
      \overline{\varphi_i}(T) = (\overline{u}(T) - u_i(T))\mathds{1}_{\mathcal{O}_i} ,	\quad &\mbox{in} \ \Omega .
    \end{cases}
\end{equation}

We also set $\psi := \alpha\psi_1 + (1-\alpha)\psi_2,$ in such a way that
\begin{equation}\label{pareto-11}
	\begin{cases}
	  -\psi_{t} -\Delta{\psi}= 0, \quad &\mbox{in} \ \Omega\times(0,T),  \\
      \psi = 0, \quad &\mbox{on} \ \partial\Omega\times(0,T),\\
      \psi(T) = w(T)(\alpha \mathds{1}_{\mathcal{O}_1} + (1-\alpha)\mathds{1}_{\mathcal{O}_2}) , \quad &\mbox{in} \ \Omega .
	\end{cases}
\end{equation}	
	
With these notation, \eqref{Pareto-L} reads as follows:
\begin{equation}\label{pareto-12}
    \hat{v}_\alpha = -\frac{1}{\mu}\left[ \alpha\overline{\varphi}_1 + (1-\alpha)\overline{\varphi}_2 + \alpha\psi_1 + (1-\alpha)\psi_2 \right]\Big|_{\omega\times(0,T)}.
\end{equation}
Therefore, after introducing the function $f_\alpha := \alpha\overline{\varphi}_1 + (1-\alpha)\overline{\varphi}_2$ and setting
\begin{equation}\label{pareto-13}
	\Lambda_\alpha(v) := \alpha\psi_1 + (1-\alpha)\psi_2,
\end{equation}
we obtain that a Pareto equilibrium $\hat{v} \in L^2\left( \om \times \left(0,T\right) \right)$ in the present context is characterized as a solution of the operator equation
\begin{equation}\label{pareto-14}
    \hat{v} + \frac{1}{\mu}\Lambda_\alpha\left( \hat{v} \right) = -\frac{1}{\mu}f_\alpha.
\end{equation}

It is straightforward to check that $\Lambda_\alpha$ is a linear compact operator on $L^2\left( \om \times \left(0,T\right) \right)$. Furthermore,
\begin{align*}
    \left( \Lambda_\alpha(v),v \right)_{L^2(\omega\times(0,T))} &= \iint_{\omega\times(0,T)} (\alpha\psi_1 + (1-\alpha)\psi_2)v\,dx\,dt \\
&= \iint_Q \psi(w_t-\Delta{w})\,dx\,dt \\
&= \int_\Omega |w(T)|^2(\alpha \mathds{1}_{\mathcal{O}_1} + (1-\alpha)\mathds{1}_{\mathcal{O}_2})\,dx \geq 0.
\end{align*}
These properties of $\Lambda_\alpha$ allow us to conclude that the equation \eqref{pareto-14} admits a unique solution. This ends the proof.
\end{proof}

\section{The semilinear case} \label{sec:Semilinear}

Throughout this section, we will assume that the dynamics of the state is governed by a semilinear heat equation. More precisely, the state equation will be
\begin{equation}\label{pareto-15}
  \begin{cases}
    u_t -\Delta{u} + F(u)= v \mathds{1}_{\omega}, \quad &\mbox{in} \ Q, \\
    u = 0, \quad &\mbox{on} \ \Sigma,\\
    u(0) = u_0,\quad &\mbox{in} \ \Omega,
  \end{cases}
\end{equation}
where $F: \mathds{R} \rightarrow \mathds{R}$ is $C^1$, with a uniformly bounded first-order derivative. As in the linear case, we have:

\begin{lemma}\label{lemma3.1}
For any given $v \in L^2\left( \om \times \left(0,T\right) \right)$ and $u_0 \in L^2(\Om),$ there exists exactly one function $u \in C^0([0,T];L^2(\Om))\cap L^2(0,T;H^1_0(\Om)),$ with $u_t \in L^2(0,T;H^{-1}(\Om)),$ which is a weak solution to \eqref{pareto-15}. Furthermore,
$$
\|u\|_{L^\infty(0,T;L^2(\Om))} + \|u\|_{L^2(0,T;H^1_0(\Om))} + \|u_t\|_{L^2(0,T;H^{-1}(\Om))} \leqslant C\left( \|u_0\| + \|v\|_{L^2\left(\om \times\left(0,T\right)\right)} \right).
$$
\end{lemma}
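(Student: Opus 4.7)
The plan is to reduce to the linear Lemma \ref{lemma2.1} by treating $F(u)$ as a lower-order perturbation, establish existence through a fixed-point (or Galerkin) scheme, and then derive the uniform estimate from standard energy arguments exploiting the Lipschitz property of $F$.

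First, I would record the basic consequence of the hypothesis on $F$: since $F\in C^1(\mathds{R})$ with $L:=\|F'\|_{L^\infty(\mathds{R})}<\infty$, one has the linear growth bound $|F(s)|\leqslant |F(0)|+L|s|$ and the Lipschitz estimate $|F(s)-F(r)|\leqslant L|s-r|$ for every $s,r\in\mathds{R}$. In particular, the Nemytskii operator $u\mapsto F(u)$ is well-defined and Lipschitz from $L^2(Q)$ into itself.

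For existence, I would set up a fixed-point map. Given $\widetilde{u}\in L^2(Q)$, use Lemma \ref{lemma2.1} to define $\Phi(\widetilde{u}):=u$ as the unique weak solution of the linear problem
\begin{equation*}
  \begin{cases}
    u_t-\Delta u = v\mathds{1}_\omega - F(\widetilde{u}), & \text{in } Q,\\
    u = 0, & \text{on } \Sigma,\\
    u(0)=u_0, & \text{in } \Omega.
  \end{cases}
\end{equation*}
On a short interval $[0,T^*]$ with $T^*>0$ to be chosen, the difference $\Phi(\widetilde{u}_1)-\Phi(\widetilde{u}_2)$ solves a homogeneous linear heat equation with source $-(F(\widetilde{u}_1)-F(\widetilde{u}_2))$ and zero initial data. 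A standard energy estimate yields a bound of the form $\|\Phi(\widetilde{u}_1)-\Phi(\widetilde{u}_2)\|_{L^2(Q_{T^*})}\leqslant CL\,T^*\,\|\widetilde{u}_1-\widetilde{u}_2\|_{L^2(Q_{T^*})}$, so for $T^*$ sufficiently small (depending only on $L$), $\Phi$ is a contraction on $L^2(\Omega\times(0,T^*))$ and Banach's theorem gives a unique local solution. Since the bound on $T^*$ does not deteriorate with the size of the data, iterating this construction on successive intervals produces a solution on all of $[0,T]$ in the required class $C^0([0,T];L^2(\Omega))\cap L^2(0,T;H^1_0(\Omega))$ with $u_t\in L^2(0,T;H^{-1}(\Omega))$; alternatively, one could build $u$ via Galerkin approximations in a basis of Dirichlet eigenfunctions of $-\Delta$ and pass to the limit using Aubin–Lions compactness to handle the nonlinear term $F(u_m)\to F(u)$.

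For uniqueness on $[0,T]$, if $u^1,u^2$ are two such solutions with the same data, then $w=u^1-u^2$ satisfies $w_t-\Delta w = -(F(u^1)-F(u^2))$ with $w(0)=0$, and testing with $w$ together with the Lipschitz bound on $F$ gives $\tfrac{d}{dt}\|w\|^2+2\|\nabla w\|^2\leqslant 2L\|w\|^2$, so Gronwall forces $w\equiv 0$.

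The global a priori estimate is the standard energy estimate: multiply \eqref{pareto-15} by $u$, integrate over $\Omega$, and use the linear growth bound on $F$ together with Cauchy–Schwarz/Young to obtain
\begin{equation*}
\tfrac{1}{2}\tfrac{d}{dt}\|u\|^2+\|\nabla u\|^2 \leqslant \tfrac{1}{2}\|v\|^2_{L^2(\omega)}+\tfrac{1}{2}|\Omega|\,|F(0)|^2 + (L+1)\|u\|^2.
\end{equation*}
Gronwall's inequality then controls $\|u\|_{L^\infty(0,T;L^2)}$ and, after time integration, $\|\nabla u\|_{L^2(0,T;L^2)}$ by $\|u_0\|+\|v\|_{L^2(\omega\times(0,T))}$ up to a constant depending only on $T$, $L$, $|F(0)|$ and $|\Omega|$. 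Finally, the $L^2(0,T;H^{-1})$ bound on $u_t$ is read off the equation $u_t=\Delta u - F(u) + v\mathds{1}_\omega$ using the previous bounds and again the linear growth of $F$. The main (modest) obstacle is just the management of the nonlinear term, which is essentially defused by the uniform Lipschitz hypothesis on $F$; beyond that, everything follows the linear template of Lemma \ref{lemma2.1}.
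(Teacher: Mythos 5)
Your argument is correct and is essentially the standard one: the paper does not write out a proof of this lemma but simply refers to \cite{evans10}, and the contraction/Galerkin construction combined with Gronwall-type energy estimates that you give is exactly the argument that reference supplies for globally Lipschitz semilinearities. One small remark that your own computation makes visible: since your energy inequality carries the additive term $\tfrac{1}{2}|\Omega|\,|F(0)|^2$, the bound you actually obtain is $C\left(\|u_0\|+\|v\|_{L^2(\om\times(0,T))}+1\right)$ unless $F(0)=0$; the estimate as literally stated in the lemma therefore implicitly assumes $F(0)=0$ or that $C$ absorbs this constant, which is worth flagging but is an imprecision of the statement rather than of your proof.
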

For a proof, see \cite{evans10}. Now, let us introduce  two uncontrolled trajectories of \eqref{pareto-15} $u_1$ and $u_2$, corresponding to distinct initial data $u_{0i} \in L^2\left(\Omega\right),$ analogously as we did in Section \ref{sec:Linear}. We consider the optimization criteria \eqref{def:ObjFuncs}, with $u^T_i := u_i\left(T\right),$ for our bi-objective problem. We will use the notions of Pareto equilibria and quasi-equilibria given in Definitions \ref{def:ParetoEq} and \ref{def:ParetoQuasiEq}.

Employing standard arguments, we can prove the following differentiability property of $J_1$ and $J_2:$
\begin{lemma} \label{lem:DerivativeJ_Semilinear}
For each $i\in\left\{ 1,2 \right\},$ the functional $J_i$ is G\^ateaux-differentiable and, for any $v\in \mathcal{U}$ and $w \in L^2(\om \times (0,T))$, one has
$$
\left\langle J_i^\prime\left( v \right), w - v\right\rangle = \iint_{ \om \times \left( 0,T\right) } \left( \varphi_i + \mu v \right) (w-v) \,dx\,dt,
$$
where $\varphi_i$ is the solution to
\begin{equation} \label{eq:Adjoint_Semilinear}
    \begin{cases}
  -\varphi_{i,t} - \Delta \varphi_i + F^\prime\left( u \right) \varphi_i = 0, &\text{ in } Q,\\
  \varphi_i = 0, &\text{ on } \Sigma,\\
  \varphi_i(T) =\left( u(T) - u_i(T) \right) \mathds{1}_{\mathcal{O}_i}, &\text{ in } \Om
\end{cases}
\end{equation}
and $u$ solves \eqref{pareto-15}.
\end{lemma}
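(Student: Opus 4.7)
The plan is to adapt the linear-case computation carried out earlier in equations \eqref{pareto-3}--\eqref{pareto-5}, accommodating the semilinearity $F(u)$ by means of a linearization step. Fix $v, w \in \mathcal{U}$ and, for $\epsilon \in (0,1]$, set $v^\epsilon := v + \epsilon(w-v) \in \mathcal{U}$, denoting by $u$ and $u^\epsilon$ the solutions of \eqref{pareto-15} associated with $v$ and $v^\epsilon$, respectively. Introducing the difference quotient $z^\epsilon := (u^\epsilon - u)/\epsilon$, a direct subtraction of the equations shows that $z^\epsilon$ satisfies
\begin{equation*}
    z^\epsilon_t - \Delta z^\epsilon + a^\epsilon z^\epsilon = (w-v)\mathds{1}_\omega \text{ in } Q, \quad z^\epsilon = 0 \text{ on } \Sigma, \quad z^\epsilon(0) = 0 \text{ in } \Omega,
\end{equation*}
where $a^\epsilon(x,t) := \int_0^1 F'\bigl(u(x,t) + \theta(u^\epsilon - u)(x,t)\bigr)\,d\theta$. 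Since $F'$ is uniformly bounded, $\|a^\epsilon\|_{L^\infty(Q)} \leqslant \|F'\|_\infty$ uniformly in $\epsilon$.

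The first key step is to prove that $z^\epsilon \to z$ in $C^0([0,T]; L^2(\Omega)) \cap L^2(0,T; H^1_0(\Omega))$ as $\epsilon \to 0^+$, where $z$ solves the linearized problem
\begin{equation*}
    z_t - \Delta z + F'(u) z = (w-v)\mathds{1}_\omega \text{ in } Q, \quad z = 0 \text{ on } \Sigma, \quad z(0) = 0 \text{ in } \Omega.
\end{equation*}
Standard parabolic energy estimates, together with Lemma \ref{lemma3.1} applied to $u^\epsilon - u$, give $u^\epsilon \to u$ in $C^0([0,T]; L^2(\Omega))$ (indeed the bound is linear in $\epsilon$), so $a^\epsilon \to F'(u)$ a.e.\ in $Q$ and stays uniformly bounded; applying the same parabolic estimates to $z^\epsilon - z$ and using dominated convergence to handle the term $(a^\epsilon - F'(u)) z^\epsilon$ then yields the desired strong convergence. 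This convergence is what allows passage to the limit in the difference quotient
\begin{equation*}
    \frac{J_i(v^\epsilon) - J_i(v)}{\epsilon} = \int_{\mathcal{O}_i} \bigl(u^\epsilon(T) + u(T) - 2u_i(T)\bigr)\frac{z^\epsilon(T)}{2}\,dx + \mu \iint_{\omega \times (0,T)} \Bigl(v + \tfrac{\epsilon}{2}(w-v)\Bigr)(w-v)\,dx\,dt,
\end{equation*}
giving
\begin{equation*}
    \langle J_i'(v), w-v \rangle = \int_{\mathcal{O}_i} (u(T) - u_i(T)) z(T)\,dx + \mu \iint_{\omega \times (0,T)} v(w-v)\,dx\,dt.
\end{equation*}

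The second step is to rewrite the boundary term using the adjoint \eqref{eq:Adjoint_Semilinear}. Multiplying the linearized equation for $z$ by $\varphi_i$ and integrating over $Q$, then integrating by parts in space and time and using $z(0)=0$, $z|_\Sigma = \varphi_i|_\Sigma = 0$, and the terminal condition $\varphi_i(T) = (u(T)-u_i(T))\mathds{1}_{\mathcal{O}_i}$, the interior integral annihilates (as $\varphi_i$ solves the adjoint equation) and one obtains
\begin{equation*}
    \int_{\mathcal{O}_i}(u(T) - u_i(T)) z(T)\,dx = \iint_{\omega \times (0,T)} \varphi_i (w-v)\,dx\,dt.
\end{equation*}
Substituting back produces the claimed formula.

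The main technical obstacle is the convergence $z^\epsilon \to z$; everything else is either a direct computation or a standard integration-by-parts duality argument. The uniform $L^\infty$ bound on $F'$ is precisely what keeps the coefficients $a^\epsilon$ controlled independently of $\epsilon$ and allows the parabolic energy estimates to close without requiring higher regularity of $u$ or $u^\epsilon$.
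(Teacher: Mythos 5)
Your proof is correct and is precisely the ``standard argument'' the paper invokes without writing out: the paper omits the proof of this lemma, and your linearization-plus-duality scheme (difference quotient $z^\epsilon$ with the mean-value coefficient $a^\epsilon$ bounded by $\|F'\|_\infty$, passage to the limit, then the adjoint duality identity) mirrors the paper's explicit linear-case computation \eqref{pareto-3}--\eqref{pareto-5}. The one point worth flagging is the factor $\tfrac{1}{2}$ you attach to $z^\epsilon(T)$ in the tracking part of the difference quotient: it does not follow from the literal definition \eqref{def:ObjFuncs} (which would produce $2\int_{\mathcal{O}_i}(u(T)-u_i(T))z(T)\,dx$), but it is forced by the formula being proven, so you have correctly inherited the paper's own implicit convention that the tracking term carries a factor $\tfrac{1}{2}$.
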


Henceforth, we will write for brevity
$$
J_{(\alpha)} := \alpha J_1 + \left( 1-\alpha\right) J_2. 
$$
Just as in the linear case, cf. Proposition \ref{prop:ParetoEqToQuasiEq}, we can prove the following.
\begin{proposition}
If $\hat{v}$ is a Pareto equilbrium for the criteria $J_1$ and $J_2,$ then it is also a Pareto quasi-equilibrium.
\end{proposition}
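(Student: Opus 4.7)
The plan is to mirror the argument of Proposition \ref{prop:ParetoEqToQuasiEq} from the linear case essentially verbatim, with the only change being the ingredient that supplies the G\^ateaux derivatives of $J_1$ and $J_2$. That ingredient is now Lemma \ref{lem:DerivativeJ_Semilinear}, which furnishes a representation of $\langle J_i^\prime(v), w-v\rangle$ with the same variational structure as \eqref{pareto-3} (the only difference being that the adjoint state $\varphi_i$ now solves the linearized adjoint equation \eqref{eq:Adjoint_Semilinear} instead of \eqref{pareto-5}).

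First, I would translate the Pareto optimality of $\hat{v}$ into a constrained minimization statement: if $\hat{v}$ is a Pareto equilibrium, then $\hat{v}$ minimizes $J_1$ on the admissible set
$$
\{v \in \mathcal{U} : J_2(v) \leqslant J_2(\hat{v})\},
$$
because any $v$ in this set with $J_1(v) < J_1(\hat{v})$ would dominate $\hat{v}$. Next, I would invoke the Kuhn-Tucker criterion, \cite[Theorems 9.2-3,4]{ciarlet1989introduction}, applied on the closed convex set $\mathcal{U} \subset L^2(\omega \times (0,T))$. This yields multipliers $\lambda_1, \lambda_2 \in \mathbb{R}$, not both zero, such that
$$
\langle \lambda_1 J_1^\prime(\hat{v}) + \lambda_2 J_2^\prime(\hat{v}), w - \hat{v}\rangle \geqslant 0 \quad \text{for every } w \in \mathcal{U},
$$
with $\lambda_2/\lambda_1 \geqslant 0$ whenever $\lambda_1 \neq 0$. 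Finally, normalizing $\alpha := \lambda_1/(\lambda_1+\lambda_2) \in [0,1]$ (or taking $\alpha=0$ in the degenerate case $\lambda_1 = 0$) delivers exactly the variational inequality of Definition \ref{def:ParetoQuasiEq}, so $\hat{v}$ is a Pareto quasi-equilibrium.

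The main conceptual point to watch is that, in contrast with the linear setting, the functionals $J_1$ and $J_2$ are no longer convex because of the nonlinearity $F$. However, convexity plays no role in the argument above: we only need the first-order necessary condition at the fixed point $\hat{v}$, which rests solely on the G\^ateaux differentiability provided by Lemma \ref{lem:DerivativeJ_Semilinear} and on the convexity of the admissible set $\mathcal{U}$. Thus the Kuhn-Tucker step transports without modification, and the proof closes in the same lines as in Proposition \ref{prop:ParetoEqToQuasiEq}.
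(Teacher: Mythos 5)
Your argument is correct and is exactly the route the paper intends: the paper gives no separate proof here, stating only that the claim follows ``just as in the linear case,'' i.e.\ by the Kuhn--Tucker argument of Proposition~\ref{prop:ParetoEqToQuasiEq}, which you reproduce with the G\^ateaux derivatives now supplied by Lemma~\ref{lem:DerivativeJ_Semilinear}. Your remark that the multiplier rule is a first-order necessary condition requiring only differentiability and convexity of $\mathcal{U}$ (not of $J_1,J_2$) is precisely the point that makes the transfer to the semilinear setting legitimate.
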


For the converse, one has:
\begin{proposition}
(a) If $\alpha \in \left[0,1\right]$ and $\hat{v}$ minimizes $J_{(\alpha)},$ then $\hat{v}$ is a Pareto quasi-equilibrium.

(b) If $\alpha \in \left(0,1\right)$ and $\hat{v}$ minimizes $J_{(\alpha)},$ then $\hat{v}$ is a Pareto equilibrium.
\end{proposition}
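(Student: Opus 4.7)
The plan is to handle the two parts essentially independently. Part (a) is a direct consequence of the first-order necessary condition for constrained minimization of a differentiable functional on a convex set, combined with the linearity of the Gâteaux derivative. Part (b) is a straightforward contradiction argument that uses only the definitions of Pareto equilibrium and of minimizer, and the strict positivity of both coefficients $\alpha$ and $1-\alpha$ when $\alpha \in (0,1)$.

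For (a), I would start by observing that, since $\mathcal{U}$ is convex, for any $w \in \mathcal{U}$ and any $\epsilon \in (0,1]$ the convex combination $\hat v + \epsilon(w-\hat v) = (1-\epsilon)\hat v + \epsilon w$ lies in $\mathcal{U}$. If $\hat v$ minimizes $J_{(\alpha)}$ on $\mathcal U$, then
$$
J_{(\alpha)}(\hat v + \epsilon(w-\hat v)) - J_{(\alpha)}(\hat v) \geqslant 0.
$$
Dividing by $\epsilon$ and letting $\epsilon \downarrow 0$, and invoking Lemma \ref{lem:DerivativeJ_Semilinear} together with the linearity of the Gâteaux derivative in the functional, yields
$$
\langle \alpha J_1'(\hat v) + (1-\alpha) J_2'(\hat v),\, w - \hat v \rangle \geqslant 0
$$
for every $w \in \mathcal{U}$, which is exactly Definition \ref{def:ParetoQuasiEq}.

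For (b), I would argue by contraposition. Suppose $\hat v$ is not a Pareto equilibrium; by Definition \ref{def:ParetoEq}, there exists $v \in \mathcal{U}$ with $J_1(v) \leqslant J_1(\hat v)$ and $J_2(v) \leqslant J_2(\hat v)$, at least one inequality being strict. Since $\alpha$ and $1-\alpha$ are both strictly positive, multiplying and adding gives
$$
J_{(\alpha)}(v) = \alpha J_1(v) + (1-\alpha) J_2(v) < \alpha J_1(\hat v) + (1-\alpha) J_2(\hat v) = J_{(\alpha)}(\hat v),
$$
contradicting the hypothesis that $\hat v$ minimizes $J_{(\alpha)}$. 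Hence $\hat v$ must be a Pareto equilibrium.

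I do not anticipate any genuine obstacle here: everything is formally identical to the linear case treated via Corollary \ref{cor:Charact_ParetoQuasiEq_Linear} and Proposition \ref{prop:ParetoQuasiEqToEq}, and uses only convexity of $\mathcal U$, Gâteaux-differentiability of each $J_i$ on $\mathcal U$ (which is exactly what Lemma \ref{lem:DerivativeJ_Semilinear} provides), and the elementary strict-inequality manipulation in (b). The only mild subtlety is that in (a) the case $\alpha \in \{0,1\}$ is included, but the argument above does not need $\alpha \in (0,1)$; it works uniformly for $\alpha \in [0,1]$, the endpoints simply recovering the necessary optimality condition for minimizing $J_2$ or $J_1$ alone.
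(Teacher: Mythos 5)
Your proof is correct, and it is essentially the argument the paper intends: the paper states this proposition without a written proof, but your part (b) is exactly the strict-inequality contradiction used in the proof of Proposition \ref{prop:ParetoQuasiEqToEq} for the linear case, and your part (a) is the standard first-order (Euler) inequality for a minimizer over a convex set, using the G\^ateaux differentiability supplied by Lemma \ref{lem:DerivativeJ_Semilinear} and the linearity of the derivative in the functional. No gaps.
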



Just as in Section \ref{sec:Linear}, we have the following $\mu = 0$ picture (we omit the proof, which is rather similar to that of the linear case):
\begin{theorem}
    Let us assume that $\mu = 0$ and $\mathcal{U}$ has the continuation property in Theorem \ref{thm:mu0case_linear}.
    \begin{itemize}
        \item If $\alpha = 0$ or $\alpha =1,$ then there exist Pareto equilibria $\hat{v}$ satisfying $J_{(\alpha)}\left( \hat{v} \right) = 0;$
        \item If $0 < \alpha < 1,$ then $J_{\left( \alpha \right)}$ does not admit a minimizer.
    \end{itemize}
\end{theorem}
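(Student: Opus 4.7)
My plan is to mirror the (omitted) argument for the linear case in Theorem~\ref{thm:mu0case_linear}, adapting each step so that the adjoint \eqref{eq:Adjoint_Semilinear} replaces the homogeneous heat equation. For the first bullet I would take $\alpha=1$ (the case $\alpha=0$ is symmetric, and $J_{(1)}=J_1$) and invoke the classical null controllability to trajectories for semilinear heat equations with globally Lipschitz $F$ in the spirit of Fabre--Puel--Zuazua / Fern\'andez-Cara--Zuazua: this yields a control $\hat v\in L^2(\omega\times(0,T))$ whose associated state $u$ satisfies $u(\cdot,T)\equiv u_1(\cdot,T)$ in all of $\Omega$, and hence $J_{(1)}(\hat v)=0$. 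Assuming $\mathcal U$ is large enough to contain such controls (as in the linear statement), to upgrade $\hat v$ to a Pareto equilibrium I would argue that any $v\in\mathcal U$ weakly dominating $\hat v$ must also satisfy $J_1(v)=0$, and then deploy unique continuation on $\Omega\setminus\overline\omega$---where $u(v;\cdot)$ and $u_1$ both solve the source-free semilinear equation---to pin down $u(v;T)$ on $\mathcal{O}_2$ and conclude $J_2(v)=J_2(\hat v)$.

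For the second bullet I would argue by contradiction: assume $\hat v$ minimizes $J_{(\alpha)}$ for some $\alpha\in(0,1)$. Lemma~\ref{lem:DerivativeJ_Semilinear} with $\mu=0$ gives the first-order optimality condition
$$P_{\mathcal U}\bigl((\alpha\varphi_1+(1-\alpha)\varphi_2)|_{\omega\times(0,T)}\bigr)=0,$$
where each $\varphi_i$ solves \eqref{eq:Adjoint_Semilinear}. Since $\varphi:=\alpha\varphi_1+(1-\alpha)\varphi_2$ satisfies a linear parabolic equation with bounded zeroth-order coefficient $F'(u)$, applying (the natural extension of) the continuation property of $\mathcal U$ forces $\varphi\equiv 0$ in $Q$. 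Evaluating at $t=T$ yields
$$\alpha(u(T)-u_1(T))\mathds{1}_{\mathcal{O}_1}+(1-\alpha)(u(T)-u_2(T))\mathds{1}_{\mathcal{O}_2}=0\quad\text{a.e.\ in }\Omega,$$
so $u(T)=u_1(T)$ on $\mathcal{O}_1\setminus\mathcal{O}_2$ and $u(T)=u_2(T)$ on $\mathcal{O}_2\setminus\mathcal{O}_1$, both of which sit in $\Omega\setminus\omega$ by hypothesis. A second, parabolic, unique-continuation step applied to $u-u_1$ and $u-u_2$ in $(\Omega\setminus\omega)\times(0,T)$ then propagates these identities across $\mathcal{O}_1\cap\mathcal{O}_2$ and delivers $u_1(T)\equiv u_2(T)$ on this nonempty overlap; backward uniqueness for the linearized parabolic equation finally returns $u_{0,1}=u_{0,2}$, the desired contradiction.

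The main obstacle I anticipate is precisely this unique-continuation machinery: in the linear setting one can invoke real-analyticity of $u(T)$ outside $\omega$, a tool that is not available for \eqref{pareto-15}. I would instead rely on Saut--Scheurer-type Carleman unique continuation for parabolic equations with $L^\infty$ potentials, and I would have to verify that (i) the continuation property assumed for $\mathcal U$ on homogeneous heat equations transfers to the adjoint system with potential $F'(u)$, and (ii) the geometric assumption $\mathcal{O}_1\Delta\mathcal{O}_2\subset\Omega\setminus\omega$ is actually strong enough to chain the local vanishing facts on $\mathcal{O}_1\setminus\mathcal{O}_2$ and $\mathcal{O}_2\setminus\mathcal{O}_1$ through the overlap $\mathcal{O}_1\cap\mathcal{O}_2$. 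Once these two points are settled, the proof assembles exactly as in the linear case.
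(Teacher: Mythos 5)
Your plan follows exactly the route the paper intends (it defers to the proof of Theorem~\ref{thm:mu0case_linear}): exact controllability to the trajectories for the first bullet, and, for the second, the first-order optimality condition plus the continuation property to force $\varphi:=\alpha\varphi_1+(1-\alpha)\varphi_2\equiv 0$, hence the vanishing of the terminal datum, hence (after propagating the identities $u(T)=u_1(T)$ and $u(T)=u_2(T)$ from $\mathcal{O}_1\setminus\mathcal{O}_2$ and $\mathcal{O}_2\setminus\mathcal{O}_1$) the contradiction $u_1(T)\equiv u_2(T)$ with backward uniqueness. Your first use of unique continuation is unproblematic: there the information is $P_{\mathcal U}(\varphi|_{\omega\times(0,T)})=0$ on the full space-time cylinder $\omega\times(0,T)$, where Saut--Scheurer-type results for parabolic equations with the bounded potential $F'(u)$ do apply (modulo restating the hypothesis on $\mathcal U$ for such equations, as you note).

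The genuine gap is the second, ``propagation'' step. In the linear proof this is done by spatial real-analyticity of $u(\cdot,T)$, $u_1(\cdot,T)$, $u_2(\cdot,T)$ at the single time $t=T$, so that vanishing of $u(T)-u_i(T)$ on a spatial open set forces vanishing on a whole component of $\Omega\setminus\overline{\omega}$. In the semilinear case you only know that $u(T)-u_1(T)=0$ on the spatial set $\mathcal{O}_1\setminus\mathcal{O}_2$ at the one time $t=T$; Saut--Scheurer/Carleman unique continuation requires vanishing on an open subset of \emph{space-time} and gives nothing from a single time slice. Unique continuation at a fixed time from a spatial open set is not available for parabolic equations with merely bounded, non-analytic lower-order terms, and $u(\cdot,T)$ has no reason to be analytic when $F$ is only $C^1$ with bounded derivative. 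The same difficulty already infects your first bullet: concluding that every $v$ with $J_1(v)=0$ has the same $J_2$-value requires determining $u(v;T)$ on $\mathcal{O}_2\setminus\mathcal{O}_1$ from its values on $\mathcal{O}_1$, again a one-time-slice continuation. So as written the argument does not close; you have correctly located the obstruction, but the tool you propose cannot remove it. (In fairness, the paper omits this proof with the remark that it is ``rather similar'' to the linear one, and the analyticity step is precisely the part that is not similar; an honest completion needs either an analyticity or quantitative unique continuation statement at $t=T$ for the semilinear flow, or a genuinely different argument.)
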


For the remainder of this section, we focus on the $\mu > 0$ case. Our first result concerns the existence of Pareto quasi-equilibria:
\begin{proposition}\label{th.2}
	Let us assume that $\mu>0.$ Then, for each $\alpha \in [0,1],$ there exists a control $\hat{v}\in L^2\left( \omega\times\left(0,T\right) \right)$ such that $J_{(\alpha)}(\hat{v}) \leqslant J_{(\alpha)}(v),$ for every $v \in L^2\left( \omega\times\left(0,T\right) \right).$ In particular, $\hat{v}$ is a Pareto quasi-equilibrium.
\end{proposition}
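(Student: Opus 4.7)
The plan is to apply the direct method of the calculus of variations to $J_{(\alpha)} = \alpha J_1 + (1-\alpha) J_2$ over the whole Hilbert space $L^2(\omega\times(0,T))$, and then invoke the first-order necessary condition together with Lemma \ref{lem:DerivativeJ_Semilinear} to promote the minimizer to a Pareto quasi-equilibrium.

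First I would note that $J_{(\alpha)} \geqslant 0$, so $m := \inf J_{(\alpha)} \in [0,+\infty)$ is well defined. Let $\{v_n\}$ be a minimizing sequence. Since $\mu > 0$ and
\[
J_{(\alpha)}(v_n) \geqslant \frac{\mu}{2}\|v_n\|^2_{L^2(\omega\times(0,T))},
\]
the sequence $\{v_n\}$ is bounded in $L^2(\omega\times(0,T))$, so a subsequence (still denoted $\{v_n\}$) satisfies $v_n \rightharpoonup \hat v$. By Lemma \ref{lemma3.1}, the corresponding states $u_n$ are bounded in $L^2(0,T;H^1_0(\Omega))\cap L^\infty(0,T;L^2(\Omega))$ with $u_{n,t}$ bounded in $L^2(0,T;H^{-1}(\Omega))$.

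Second, I would extract limits. The Aubin--Lions compactness lemma provides a subsequence such that $u_n \to \hat u$ strongly in $L^2(Q)$ (and a.e., up to a further subsequence). Since $F$ is $C^1$ with bounded derivative, $F$ is globally Lipschitz, so $F(u_n) \to F(\hat u)$ strongly in $L^2(Q)$. Passing to the limit in the weak formulation of \eqref{pareto-15} then identifies $\hat u$ as the unique state associated with $\hat v$. To handle the terminal values, I would use the bound on $u_{n,t}$ in $L^2(0,T;H^{-1})$ together with the uniform $L^\infty(0,T;L^2)$ estimate to conclude that $\{u_n(T)\}$ is bounded in $L^2(\Omega)$; identifying the weak limit via a test-function argument (integrating by parts in time against a smooth test function $\eta$ with $\eta(T)\neq 0$) yields $u_n(T) \rightharpoonup \hat u(T)$ weakly in $L^2(\Omega)$.

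Third, I would invoke lower semicontinuity. For each $i$, weak convergence of $u_n(T)$ in $L^2(\Omega)$ gives
\[
\|\hat u(T) - u_i(T)\|^2_{L^2(\mathcal{O}_i)} \leqslant \liminf_n \|u_n(T) - u_i(T)\|^2_{L^2(\mathcal{O}_i)},
\]
and $\|\hat v\|^2 \leqslant \liminf_n \|v_n\|^2$ in $L^2(\omega\times(0,T))$. Hence $J_{(\alpha)}(\hat v) \leqslant \liminf_n J_{(\alpha)}(v_n) = m$, so $\hat v$ is a minimizer. Finally, by Lemma \ref{lem:DerivativeJ_Semilinear}, the first-order condition $\langle J_{(\alpha)}'(\hat v), w - \hat v\rangle \geqslant 0$ holds for every $w \in L^2(\omega\times(0,T))$ (in fact with equality, since the minimization is unconstrained), which is precisely the definition of Pareto quasi-equilibrium with this value of $\alpha$.

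The main obstacle is the passage to the limit in the nonlinear term $F(u_n)$ combined with correctly identifying $\hat u(T)$: without strong $L^2(Q)$ compactness of $u_n$ (via Aubin--Lions) one cannot handle $F(u_n)$, and without the $H^1(0,T;H^{-1})$ bound one cannot make sense of the weak limit $u_n(T) \rightharpoonup \hat u(T)$ used in the lower semicontinuity step. Both bounds, however, follow from Lemma \ref{lemma3.1} and the Lipschitz character of $F$, so the argument goes through.
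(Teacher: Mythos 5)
Your proposal is correct and follows essentially the same route as the paper, which simply asserts that $J_{(\alpha)}$ is coercive and sequentially weakly lower semi-continuous and omits the details; you have supplied exactly those standard details (direct method, Aubin--Lions compactness to handle $F(u_n)$, weak convergence of $u_n(T)$, and the unconstrained first-order condition). No gaps.
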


The proof follows immediately from the fact that the functional $J_{\left(\alpha\right)} : L^2\left( \om \times \left(0,T\right) \right) \rightarrow \mathds{R}$ is sequentially weakly lower semi-continuous and coercive. It is possible to verify these properties of $J_{\left( \alpha \right)}$ in a standard way. For brevity, we omit the datails.

We now seek to guarantee that the previously identified Pareto quasi-equilibria are in fact Pareto equilibria. If we assume that $F$ is $C^2$ and its first and second order derivatives are uniformly bounded and we restrict ourselves to spatial dimensions not greater than three, this result holds:
\begin{theorem}\label{th.3}
	Let us assume that $N \leqslant 3$, $R>0$ and the set $\mathcal{U}_R := \left\{ v \in \mathcal{U} : \|v\|_{L^2\left( \om \times \left(0,T\right) \right)} \leqslant R \right\}$ is non-empty. There exists $k = k\left( \Om,\, T,\, \|u_{01}\|,\,\|u_{02}\|\right)$ such that, if $\mu > k \left( R + \|u_0\|_{L^2(\Omega)} \right),$ then $J_{\left(\alpha\right)}$ restricted to $\mathcal{U}_R$ is convex. Consequently, if $\hat{v}\in L^2(\om \times (0,T))$ is a Pareto quasi-equilibrium corresponding to some $\alpha\in \left[ 0,1 \right]$ and $\mu$ is sufficiently large, then $\hat{v}$ is a Pareto equilibrium.
\end{theorem}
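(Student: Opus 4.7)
The plan is to prove the convexity assertion by showing that each Hessian $J_i''(v)$ is positive semi-definite on $\mathcal{U}_R$; the convexity of $J_{(\alpha)} = \alpha J_1 + (1-\alpha)J_2$ then follows by convex combination. Differentiating the formula in Lemma \ref{lem:DerivativeJ_Semilinear} once more in the direction $h \in L^2(\om\times(0,T))$ yields
\begin{equation*}
\langle J_i''(v) h, h \rangle = \int_{\mathcal{O}_i} |z(T)|^2\,dx + \int_{\mathcal{O}_i} (u(T) - u_i(T))\, Z(T)\,dx + \mu \|h\|^2_{L^2(\om\times(0,T))},
\end{equation*}
where $z$ is the linearized state (source $h\mathds{1}_\om$, zero initial data) and $Z := u''(v)(h,h)$ is the second-order sensitivity, solving
\begin{equation*}
Z_t - \Delta Z + F'(u) Z = -F''(u)\, z^2 \text{ in } Q, \quad Z = 0 \text{ on } \Sigma, \quad Z(0) = 0 \text{ in } \Om.
\end{equation*}
The first and third terms are non-negative, so only the sign-indefinite middle term must be absorbed by the $\mu\|h\|^2$ contribution.

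From Lemma \ref{lemma3.1}, $\|u(T) - u_i(T)\| \leqslant C(\Om,T)(\|u_0\| + \|u_{0i}\| + R)$ whenever $v \in \mathcal{U}_R$. The key quadratic estimate $\|Z(T)\| \leqslant C\|h\|^2_{L^2(\om\times(0,T))}$ is what I expect to be the main obstacle, and it is precisely where $N \leqslant 3$ enters. Using that $F''$ is bounded and applying a parabolic energy estimate for $Z$ with source in $L^2(0,T; H^{-1}(\Om))$,
\begin{equation*}
\|Z(T)\| \leqslant C\|z^2\|_{L^2(0,T;H^{-1}(\Om))} \leqslant C\|z^2\|_{L^2(0,T;L^{6/5}(\Om))} = C\|z\|^2_{L^4(0,T;L^{12/5}(\Om))},
\end{equation*}
where the embedding $L^{6/5}(\Om) \hookrightarrow H^{-1}(\Om)$ (dual to $H^1_0 \hookrightarrow L^6$) requires $N\leqslant 3$. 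Gagliardo-Nirenberg interpolation between $L^\infty(0,T;L^2(\Om))$ and $L^2(0,T;L^6(\Om))$ (once more valid only for $N\leqslant 3$) then yields $\|z\|_{L^4(0,T;L^{12/5}(\Om))} \leqslant C\|z\|_{L^\infty(0,T;L^2)\cap L^2(0,T;H^1_0)}$, and the standard energy estimate for the linearized equation bounds this by $C\|h\|_{L^2(\om\times(0,T))}$. Combining these inequalities, the middle term is controlled in absolute value by $C(\|u_0\| + \|u_{0i}\| + R)\|h\|^2$, so that
\begin{equation*}
\langle J_i''(v) h, h \rangle \geqslant \left[\mu - C(\Om,T,\|u_{0i}\|)(R + \|u_0\|) \right] \|h\|^2_{L^2(\om\times(0,T))}.
\end{equation*}
Absorbing the data-dependent constants into the function $k = k(\Om,T,\|u_{01}\|,\|u_{02}\|)$ claimed in the statement, the requirement $\mu > k(R + \|u_0\|)$ delivers positive semi-definiteness of $J_i''$ on $\mathcal{U}_R$ for $i=1,2$, proving the convexity of $J_{(\alpha)}$ there.

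For the Pareto equilibrium consequence with $\alpha \in (0,1)$, I would first derive an a-priori bound on $\|\hat v\|$: testing the first-order condition $\langle J_{(\alpha)}'(\hat v), w-\hat v\rangle \geqslant 0$ with some $w_0\in\mathcal{U}_R$ and using $\|\varphi_i\| \leqslant C(\|u_0\| + \|u_{0i}\| + \|\hat v\|)$ yields $\|\hat v\| \leqslant R^*$ for $\mu$ large, with $R^*$ depending only on the data. By coercivity of $J_i$ for $\mu>0$, any candidate improver $v\in \mathcal{U}$ with $J_i(v) \leqslant J_i(\hat v)$, $i=1,2$, satisfies $\|v\| \leqslant R^{**}$ for some explicit $R^{**}$ depending on $R^*$ and the data. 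Enlarging $\mu$ so that the convexity part applies on $\mathcal{U}_{R^{**}}$, I invoke convexity of $J_{(\alpha)}$ on $\mathcal{U}_{R^{**}}$ together with the first-order condition along the segment joining $\hat v$ to $v$ (which stays in $\mathcal{U}_{R^{**}}$ by convexity of the ball) to conclude $J_{(\alpha)}(v) \geqslant J_{(\alpha)}(\hat v)$. Combined with $J_i(v) \leqslant J_i(\hat v)$ for $i=1,2$, this prevents any strict improvement, so $\hat v$ is a Pareto equilibrium. For the boundary cases $\alpha\in\{0,1\}$, strict convexity from $\mu>0$ identifies the quasi-equilibrium $\hat v$ as the unique minimizer of $J_2$ (respectively $J_1$) on $\mathcal{U}$, which is automatically Pareto-optimal.
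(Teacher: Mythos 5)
Your proof is correct, and its core — the convexity estimate — takes a genuinely different route from the paper's. The paper computes $J_{(\alpha)}''$ through the adjoint formulation: it introduces the linearized state $y$ and a second adjoint $\psi$ solving a backward equation with source $-F''(u)\varphi y$ and terminal datum $y(T)\left(\alpha\mathds{1}_{\mathcal{O}_1}+(1-\alpha)\mathds{1}_{\mathcal{O}_2}\right)$, integrates by parts to get $J_{(\alpha)}''=-\iint_Q F''(u)\varphi|y|^2+\int_\Omega|y(T)|^2(\cdots)+\mu\|w-v\|^2$, and controls the indefinite term via $\|y\|_{L^2(0,T;L^\infty(\Omega))}\leqslant C\|w-v\|$ (which rests on $H^2(\Omega)\hookrightarrow L^\infty(\Omega)$, hence $N\leqslant 3$) together with $\|\varphi\|_{L^2(Q)}\leqslant C(1+R+\|u_0\|)$. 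You instead differentiate the control-to-state map twice, producing the second-order sensitivity $Z$ with source $-F''(u)z^2$, so the indefinite term becomes $\int_{\mathcal{O}_i}(u(T)-u_i(T))Z(T)$, and you bound $\|Z(T)\|$ by the chain $L^{6/5}(\Omega)\hookrightarrow H^{-1}(\Omega)$ plus interpolation of $L^{12/5}$ between $L^2$ and $L^6$; this uses $N\leqslant 3$ only through $H^1_0\hookrightarrow L^6$ and plain energy estimates, with no maximal parabolic regularity. You also establish convexity of each $J_i$ separately and take convex combinations, whereas the paper works directly with $J_{(\alpha)}$ — an immaterial difference. Your handling of the ``consequently'' clause (a priori bound on $\hat v$, coercivity bound on candidate improvers, enlarging $\mu$ so that convexity holds on the larger ball, then the variational inequality plus convexity along the segment) is substantially more detailed than the paper's one-line appeal to coercivity, and in fact supplies the convexity-on-a-sufficiently-large-ball argument that the one-liner presupposes. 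The only cosmetic discrepancy is a factor of $2$ in your second-derivative formula, inherited from the paper's own normalization of $J_i'$, which affects nothing.
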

\begin{proof}
The second part is immediate from the coerciveness of $J_{\left(\alpha\right)}$ for positive $\mu.$ Let us prove that there exists a constant $k > 0,$ depending only on $\Om,\, T,\,F,\, u_1$ and $u_2,$ such that
\begin{equation} \label{eq:2ndOrderDerivativeIsPositive}
     J_{\left(\alpha\right)}^{\prime\prime}\left(v; w - v,w - v\right) \geq \left[ \mu - k\left(1+ R + \|u_0\|_{L^2(\Omega)} \right) \right]\|w - v\|^2_{L^2(\omega\times(0,T))}
\end{equation}
for every $v \in \mathcal{U}_R$ and every $w\in \mathcal{U}.$ After this, if $\mu$ is sufficiently large, namely
$$
\mu > k\left(1+ R + \|u_0\|_{L^2(\Omega)} \right),
$$
it will be clear that $J_{\left( \alpha \right)}$ is convex. 

Let us fix $v \in \mathcal{U}_R$ and let us introduce $\varphi,\, y$ and $\psi,$ with $\varphi := \alpha \varphi^1 + \left( 1-\alpha \right)\varphi^2,$  	
\begin{equation}\label{pareto-20}
  \begin{cases}
    y_{ t} -\Delta y + F^\prime(u)y  = (w - v)\mathds{1}_\mathcal{\omega}, \quad &\mbox{in} \ Q, \\
    y = 0, \quad &\mbox{on} \ \partial\Omega\times(0,T),\\
    y(0) = 0,	\quad &\mbox{in} \ \Omega ,
  \end{cases}
\end{equation}	
and
\begin{equation}\label{pareto-21}
  \begin{cases}
    -{\psi}_{ t} -\Delta{{\psi}} + F'(u)\psi =  -F''(u)\varphi{y}, \quad &\mbox{in} \ Q, \\
    {\psi} = 0, \quad &\mbox{on} \ \partial\Omega\times(0,T),\\
    {\psi}(T) = y(T)\left( \alpha  \mathds{1}_{\mathcal{O}_1} + (1-\alpha)\mathds{1}_{\mathcal{O}_2} \right),	\quad &\mbox{in} \ \Omega .
  \end{cases}
\end{equation}
With these notations, it is standard and not difficult to deduce that $J_{(\alpha)}$ is twice differentiable and
\begin{equation}\label{pareto-22}
	 J_{\left(\alpha\right)}^{\prime\prime}\left(v; w - v,w - v\right) = \iint_{\omega\times(0,T)} \left[ \psi + \mu(w-v) \right](w-v) \ dx\,dt,
\end{equation}
for each $w \in \mathcal{U}.$

From \eqref{pareto-22}-\eqref{pareto-23}, an integration by parts yields
\begin{align}\label{pareto-23}
	\begin{split}
		 \iint_{\omega\times(0,T)} \psi(w-v) \ dx\,dt &= \iint_{Q} \psi[y_t - \Delta{y} + F'(u)y] \ dx\,dt \\
		 &= -\iint_{Q} F^{\prime\prime}(u)\varphi|y|^2 \ dx\,dt + \int_\Omega |y(T)|^2( \alpha \mathds{1}_{\mathcal{O}_1} + (1-\alpha)\mathds{1}_{\mathcal{O}_2}  ) \ dx.
	\end{split}
\end{align}
Since $N \leqslant 3,$ we have the estimates
\begin{equation}\label{pareto-25}
	\|y\|_{L^2(0,T;L^{\infty}(\Omega))} \leqslant C\|v\|_{L^2(\omega\times(0,T))},
\end{equation}
and
\begin{align}\label{pareto-26}
  \begin{split}
	 \|\varphi\|_{L^2(Q)} &\leqslant C\left( 1 + \|u(T)\|_{L^2(\Omega)} \right) \\
	 &\leqslant C\left( 1 + \|v\|_{L^2(\omega\times(0,T))} + \|u_0\|_{L^2(\Omega)}\right),
  \end{split}	
\end{align}
where $C > 0$ depends only on $\Om,\,T,\,F,\,u_1$ and $u_2.$ Employing \eqref{pareto-25}-\eqref{pareto-26}, we can estimate the first integral appearing in the right of \eqref{pareto-23} as follows:
\begin{equation}\label{pareto-27}
  \left| \iint_{Q} F^{\prime\prime}(u)\varphi|y|^2 \ dx\,dt \right| \leq C\left( 1 + R + \|u_0\|_{L^2(\Omega)} \right) \|w - v\|^2_{L^2(\omega\times(0,T))}.
\end{equation}

Therefore, using estimate \eqref{pareto-27} in \eqref{pareto-23}, and recalling \eqref{pareto-22}, we have
$$
 J_{\left(\alpha\right)}^{\prime\prime}\left(v; w - v,w - v\right) \geq (\mu - \mu_0)\|w-v\|^2_{L^2(\omega\times(0,T))},
$$
where $\mu_0$ is of the form $\mu_0 = k(\Omega,\, T,\, F,\,u_1,\,u_2)(1 + R + \|u_0\|_{L^2(\Omega)}).$ This completes the proof.
\end{proof}

\section{The bilinear control case} \label{sec:bilinear}

We devote this section to the analysis of Pareto equilibria of the model:
\begin{equation}\label{pareto-28}
    \begin{cases}
      u_t -\Delta{u} = -uv \mathds{1}_{\omega}, \quad &\mbox{in} \ Q, \\
      u = 0, \quad &\mbox{on} \ \Sigma,\\
      u(0) = u_0,\quad &\mbox{in} \ \Omega.
    \end{cases}
\end{equation}

As before, we will assume that the admissible set $\mathcal{U}$ is a non-empty closed convex set of $L^2(\om \times (0,T))$. We refer to this framework as the bilinear case since the control acts multiplicatively. As in the previous sections, we fix two uncontrolled trajectories $u_1$ and $u_2$ respectively corresponding to distinct initial data, $u_{01}$ and $u_{02}$. Our objective functionals are defined precisely as in \eqref{def:ObjFuncs}. Once again, we consider Pareto equilibria and quasi-equilibria as in Definitions \ref{def:ParetoEq} and \ref{def:ParetoQuasiEq}.

\begin{lemma}
Assume that $N=2$. For every $v\in L^2(\om \times (0,T))$ and every $u_0\in L^2(\Om)$, there exists a unique function $u\in C^0_W([0,T];L^2(\Om))\cap L^2(0,T;H^1_0(\Om))$, with $u_t\in L^2(0,T;W^{-1,\alpha}(\Om))$ for all $\alpha < 2$, which is a weak solution to $(\ref{pareto-28})$. Furthermore, 
$$\|u\|_{L^\infty(0,T;L^2(\Om))}+\|u\|_{L^2(0,T;H^1_0(\Om))}+\|u_t\|_{L^2(0,T;W^{-1,\alpha}(\Om))} \leqslant C(\|u_0\|+\|v\|_{L^2(\om \times (0,T))}).$$\end{lemma}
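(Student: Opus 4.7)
The plan is to proceed via a Faedo--Galerkin scheme. Let $(e_k)_{k\geq 1}$ be the $L^2(\Omega)$-orthonormal basis of Dirichlet eigenfunctions of $-\Delta$, set $V_n := \mathrm{span}(e_1,\ldots,e_n)$, and project \eqref{pareto-28} with initial datum $P_n u_0$ onto $V_n$. The resulting system of ODEs for the coefficients has Carathéodory coefficients (the time-dependence enters linearly through $v$, which is $L^2$ in $t$), so a solution $u_n$ exists locally; the energy estimates below show it is global on $[0,T]$.

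The crucial estimate is obtained by testing the Galerkin equation against $u_n$, giving
$$\frac{1}{2}\frac{d}{dt}\|u_n\|^2 + \|\nabla u_n\|^2 = -\int_\omega v\,u_n^2\,dx.$$
I bound the right-hand side by $\|v(t)\|_{L^2(\omega)}\|u_n\|_{L^4(\omega)}^2$ and then invoke the two-dimensional Ladyzhenskaya inequality $\|u_n\|_{L^4}^2 \leq C\|u_n\|\|\nabla u_n\|$, followed by Young's inequality, to arrive at
$$\frac{d}{dt}\|u_n\|^2 + \|\nabla u_n\|^2 \leq C\,\|v(t)\|_{L^2(\omega)}^2\,\|u_n\|^2.$$
Since $t\mapsto \|v(t)\|_{L^2(\omega)}^2 \in L^1(0,T)$, Grönwall yields a uniform bound on $u_n$ in $L^\infty(0,T;L^2(\Om))\cap L^2(0,T;H^1_0(\Om))$. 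Testing against an arbitrary $\phi\in H^1_0(\Om)$, and using that the $L^2$-projection $P_n$ is simultaneously the $H^1_0$-orthogonal projection onto $V_n$ (since the $e_k$ are also orthogonal in $H^1_0$), together with Hölder and Gagliardo--Nirenberg, gives a uniform bound on $u_{n,t}$ in $L^{4/3}(0,T;H^{-1}(\Om))$.

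With these bounds, Aubin--Lions compactness provides a subsequence converging strongly in $L^2(Q)$ to some $u$, together with the usual weak and weak-$*$ convergences; strong $L^2(Q)$ convergence is enough to pass to the limit in the bilinear term $vu_n\mathds{1}_\omega \to vu\mathds{1}_\omega$ in $L^1(Q)$, and hence in the weak formulation. To obtain the stated estimate on $u_t$ for the limit, I use the two-dimensional embedding $W^{1,\alpha'}_0(\Om) \hookrightarrow L^\infty(\Om)$ valid for every $\alpha'>2$; by duality this gives $L^1(\Om) \hookrightarrow W^{-1,\alpha}(\Om)$ for every $\alpha < 2$. Combined with the Cauchy--Schwarz bound $\|vu\|_{L^1(\omega)} \leq \|v\|_{L^2(\omega)}\|u\|_{L^2(\omega)}$ and the already-established $L^\infty(L^2)$ bound on $u$, this yields $vu\mathds{1}_\omega \in L^2(0,T;W^{-1,\alpha}(\Om))$; adding $\Delta u \in L^2(0,T;H^{-1}(\Om))$ closes the estimate.

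Uniqueness is obtained by writing the equation satisfied by the difference $w = u - \tilde u$ of two solutions with the same data (which is again of the form $w_t - \Delta w = -vw\mathds{1}_\omega$, $w(0)=0$) and running the same energy estimate, Ladyzhenskaya inequality, and Grönwall argument to force $w\equiv 0$. The main obstacle throughout is the low regularity of the multiplicative coefficient, which is only square-integrable in space-time; it is precisely the two-dimensional Ladyzhenskaya/Gagliardo--Nirenberg inequality that makes all the key nonlinear estimates work in $N=2$ and would fail identically in $N\geq 3$, which is why the lemma is restricted to dimension two.
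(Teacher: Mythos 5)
Your argument is correct and follows essentially the same route as the paper, which defers the proof of this lemma to the estimates appearing in the proof of Theorem~\ref{thm:Bilinear1stMainThm}: these are precisely your energy estimate combined with the two-dimensional Ladyzhenskaya inequality and Gr\"onwall, the bound on $v u\,\mathds{1}_{\omega}$ in $L^2(0,T;L^1(\Omega))$ together with the embedding $L^1(\Omega)\hookrightarrow W^{-1,\alpha}(\Omega)$ for $\alpha<2$, and Aubin--Lions compactness. The only step deserving an extra line is the energy identity in your uniqueness argument, since $w_t$ is a priori only in $L^{4/3}(0,T;H^{-1}(\Omega))$ so the pairing $\left\langle w_t,w\right\rangle$ is not covered by the standard Lions--Magenes lemma and should be justified (e.g.\ by Steklov averaging), a routine point that the paper itself does not address.
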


The proof relies on the fact that $N=2$. It is a little more involved than the proofs Lemmas \ref{lemma2.1} and \ref{lemma3.1}, but can also be achieved with the help of standard estimates (in fact, these estimates will appears soon, in the proof of Theorem \ref{thm:Bilinear1stMainThm}).

Note that a similar result can be established for $N=3$ if we assume that $v \in L^\infty(\om \times (0,T))$. More precisely, one has:

\begin{lemma}
Assume that $N=3$. For every $v\in L^\infty(\om \times (0,T))$ and every $u_0\in L^2(\Om)$, there exists a unique function $u\in C^0([0,T];L^2(\Om))\cap L^2(0,T;H^1_0(\Om))$, with $u_t\in L^2(0,T;H^{-1}(\Om))$, which is a weak solution to $(\ref{pareto-28})$. Furthermore, 
$$\|u\|_{L^\infty(0,T;L^2(\Om))}+\|u\|_{L^2(0,T;H^1_0(\Om))}+\|u_t\|_{L^2(0,T;H^{-1}(\Om))} \leqslant C(\|u_0\|+\|v\|_{L^2(\om \times (0,T))}).$$\end{lemma}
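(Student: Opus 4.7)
The plan is to argue along the lines of Lemmas \ref{lemma2.1} and \ref{lemma3.1}, but treating $v\,\mathds{1}_\omega$ as a zero-order coefficient rather than a forcing term. The fact that $v\in L^\infty(\omega\times(0,T))$ is the key: it lets us absorb the product $vu$ into standard parabolic estimates even when $N=3$, and it makes the constant $C$ in the final bound depend on $\|v\|_{L^\infty(\omega\times(0,T))}$ (alongside $\Omega$ and $T$), so that the resulting inequality in terms of $\|u_0\|+\|v\|_{L^2(\omega\times(0,T))}$ holds trivially.

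First I would build approximate solutions by a Faedo–Galerkin scheme, using the orthonormal basis $\{w_k\}$ of $L^2(\Omega)$ consisting of Dirichlet eigenfunctions of $-\Delta$. Writing $u_n(t)=\sum_{k=1}^n c_k^n(t)w_k$, I would require
$$
(u_n'(t),w_k)+(\nabla u_n(t),\nabla w_k)+\int_\omega v(x,t)\,u_n(x,t)\,w_k(x)\,dx=0,\quad 1\leqslant k\leqslant n,
$$
with $u_n(0)=P_n u_0$. This is a linear ODE system with $L^\infty$-in-time coefficients, so Carath\'eodory theory yields a unique absolutely continuous solution on $[0,T]$.

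Second, testing with $u_n$ itself gives the identity
$$
\tfrac{1}{2}\tfrac{d}{dt}\|u_n(t)\|^2+\|\nabla u_n(t)\|^2=-\int_\omega v\,u_n^2\,dx,
$$
whose right-hand side is controlled by $\|v\|_{L^\infty(\omega\times(0,T))}\|u_n(t)\|^2$. Gr\"onwall's inequality yields a uniform bound on $\|u_n\|_{L^\infty(0,T;L^2(\Omega))}$, and a further time integration produces a uniform bound on $\|u_n\|_{L^2(0,T;H^1_0(\Omega))}$, both depending on $\Omega$, $T$, $\|v\|_{L^\infty}$ and $\|u_0\|$. For the time-derivative estimate, I would test the Galerkin equation against $P_n\phi$ for $\phi\in H^1_0(\Omega)$: since $v u_n\mathds{1}_\omega\in L^2(Q)$ with norm $\leqslant\|v\|_{L^\infty}\|u_n\|_{L^2(Q)}$, and $-\Delta u_n$ is bounded in $L^2(0,T;H^{-1})$ by $\|u_n\|_{L^2(0,T;H^1_0)}$, I obtain a uniform bound on $\|u_n'\|_{L^2(0,T;H^{-1}(\Omega))}$. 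Recovering the stated inequality in the form $C(\|u_0\|+\|v\|_{L^2(\omega\times(0,T))})$ is then immediate once $C$ is allowed to absorb the exponential factor in $\|v\|_{L^\infty}$.

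Third, I would pass to the limit. The uniform bounds together with the Aubin–Lions compactness lemma deliver a subsequence $u_{n_k}$ converging weakly-$\ast$ in $L^\infty(0,T;L^2)$, weakly in $L^2(0,T;H^1_0)$ and strongly in $L^2(Q)$ to some $u$. Strong $L^2(Q)$ convergence and $v\in L^\infty$ let me pass to the limit in $\int_\omega v u_n\phi\,dx$, so $u$ satisfies the weak formulation of \eqref{pareto-28}. Continuity $u\in C^0([0,T];L^2(\Omega))$ follows from the combination $u\in L^2(0,T;H^1_0)$, $u_t\in L^2(0,T;H^{-1})$ via the standard Lions–Magenes embedding. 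For uniqueness, the difference $w$ of two solutions with the same $u_0$ solves the same equation with zero initial data; the energy identity plus Gr\"onwall, identical to the one used for the a priori bound, forces $w\equiv 0$.

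The only technical point worth highlighting is dimensional: in $N=3$ the space-time product $vu$ is only as good as its factors allow, so merely $v\in L^2$ would not place $vu\mathds{1}_\omega$ in $L^2(0,T;H^{-1}(\Omega))$ (this is precisely why the previous lemma was stated for $N=2$). The hypothesis $v\in L^\infty$ is what repairs this and makes both the existence proof and the uniqueness argument go through; the stated bound with $\|v\|_{L^2(\omega\times(0,T))}$ is then a weakening of the genuine $L^\infty$-dependent bound and poses no difficulty.
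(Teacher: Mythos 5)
Your proof is correct and follows essentially the same route the paper intends: the paper omits a detailed proof but points to the "standard estimates" that appear in the proof of Theorem 4.4(a), which are exactly your key step of absorbing $\int_\omega v\,u^2\,dx$ via $\|v\|_{L^\infty(\omega\times(0,T))}\|u\|^2$ followed by Gr\"onwall, with the Galerkin construction, the $u_t\in L^2(0,T;H^{-1}(\Omega))$ bound from $vu\mathds{1}_\omega\in L^2(Q)$, and the Lions--Magenes continuity all standard. Your closing remark correctly identifies why $L^\infty$ controls repair the $N=3$ case, and your honest observation that the constant $C$ must depend on $\|v\|_{L^\infty}$ (so the stated $L^2$-form of the bound is a trivial consequence) is consistent with the paper's own convention, since already in the $N=2$ lemma the constant implicitly depends on $\|v\|_{L^2(\omega\times(0,T))}$ through the Gr\"onwall exponent.
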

As before, we set 
$$
J_{\left( \alpha \right)} := \alpha J_1 + \left( 1-\alpha \right)J_2,
$$ 
for each $\alpha\in \left( 0,1 \right).$

\begin{theorem}\label{thm:Bilinear1stMainThm}
	We assume that $N=2$ and $\mu > 0.$ For each $\alpha\in \left[ 0,1 \right],$ there exists a control $\hat{v}_\alpha \in L^2(\omega\times(0,T))$ which minimizes the functional $J_{\left( \alpha \right)},$ i.e. that is a Pareto quasi-equilibrium corresponding to $\alpha.$ Moreover, if we restrict ourselves to an $\alpha\in \left( 0,1 \right)$, then $\hat{v}_{\left( \alpha \right)}$ is actually a Pareto equilibrium.
\end{theorem}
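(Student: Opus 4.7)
My plan is to apply the direct method of the calculus of variations to produce a minimizer of $J_{(\alpha)}$, and then to recover the Pareto equilibrium property for $\alpha\in(0,1)$ by an elementary contradiction argument. Coercivity will come immediately from the Tikhonov penalty, namely $J_{(\alpha)}(v)\geqslant \tfrac{\mu}{2}\|v\|_{L^{2}(\om\times(0,T))}^{2}$, so any minimizing sequence $\{v_n\}$ will be bounded and, passing to a subsequence, I will have $v_n\rightharpoonup \hat v$ weakly in $L^{2}(\om\times(0,T))$. The substance of the argument lies in showing that $J_{(\alpha)}$ is weakly sequentially lower semi-continuous along such sequences, which will require good compactness properties of the nonlinear state map $v\mapsto u(v)$.

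The central step is to extract strong compactness of the associated states $u_n:=u(v_n)$. Testing \eqref{pareto-28} with $u_n$ yields the nonlinear term $-\iint u_{n}^{2}v_n$, and in dimension $N=2$ I would handle it via Ladyzhenskaya's inequality $\|u_n\|_{L^{4}(\Om)}^{2}\leqslant C\|u_n\|_{L^{2}(\Om)}\|\nabla u_n\|_{L^{2}(\Om)}$ together with Young's inequality to absorb the gradient. A Gronwall argument will then furnish uniform bounds for $u_n$ in $L^{\infty}(0,T;L^{2}(\Om))\cap L^{2}(0,T;H^{1}_{0}(\Om))$, and inspection of the equation supplies a uniform bound for $u_{n,t}$ in $L^{2}(0,T;W^{-1,q}(\Om))$ for some $q<2$. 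The Aubin--Lions lemma then gives $u_n\to u$ strongly in $L^{2}(Q)$, so that the product $u_n v_n$, a strong--weak pair, converges to $u\hat v$ in the sense of distributions; passing to the limit in the weak formulation of \eqref{pareto-28} and invoking the uniqueness part of Lemma 4.1 identifies $u=u(\hat v)$. A similar passage in the equation also yields $u_n(T)\rightharpoonup u(T)$ weakly in $L^{2}(\Om)$.

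With these ingredients, weak lower semi-continuity of $J_{(\alpha)}$ at $\hat v$ follows: each fitting term $\int_{\mathcal{O}_i}|u_n(T)-u_i^T|^{2}\,dx$ is weakly lower semi-continuous because $u_n(T)\rightharpoonup u(T)$, and the Tikhonov term is so as a squared norm. Therefore $J_{(\alpha)}(\hat v)\leqslant \liminf_n J_{(\alpha)}(v_n)=\inf J_{(\alpha)}$, proving that $\hat v$ is a minimizer and hence a Pareto quasi-equilibrium. For the Pareto equilibrium assertion when $\alpha\in(0,1)$, I would argue by contradiction: if some $v$ satisfied $J_i(v)\leqslant J_i(\hat v)$ for $i=1,2$ with at least one strict inequality, then, since both $\alpha$ and $1-\alpha$ are strictly positive, we would have $J_{(\alpha)}(v)<J_{(\alpha)}(\hat v)$, contradicting the minimality of $\hat v$. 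The principal obstacle is the passage to the limit in the bilinear term $u_n v_n$; this is exactly the point at which the restriction $N=2$ enters through Ladyzhenskaya's inequality, and without the two-dimensional scaling the energy estimate would not close and the Aubin--Lions compactness would fail.
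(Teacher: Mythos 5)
Your proposal is correct and follows essentially the same route as the paper's proof: coercivity from the Tikhonov term, the two-dimensional Ladyzhenskaya estimate plus Gronwall to bound the states, a $W^{-1,q}$ ($q<2$) bound on $u_{n,t}$ feeding into Aubin--Lions compactness, strong $L^{2}(Q)$ convergence of $u_n$ to identify the limit state and obtain $u_n(T)\rightharpoonup u(T)$, and finally weak lower semi-continuity of each term of $J_{(\alpha)}$. Your explicit identification of the limit via the strong--weak product $u_nv_n$ and the uniqueness of weak solutions, as well as the spelled-out contradiction argument for the Pareto equilibrium claim when $\alpha\in(0,1)$, only make explicit steps the paper leaves implicit.
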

\begin{proof}
It is not restrictive to assume that $\mathcal{U}=L^2(\om \times (0,T))$. Similarly as in Theorem \ref{th.2}, we just have to prove that the functional $J_{\left( \alpha \right)}$ is weakly lower semi-continuous
and coercive. The second property is obvious, whence we focus on the first one. Let the $v_n$ satisfy $v_n \rightharpoonup v$ weakly in $L^2(\omega\times(0,T))$ and let $u_n$ be, for each $n$, the state corresponding to the control $v_n,$ i.e.
\begin{equation}\label{pareto-29}
  \begin{cases}
    u_{n,t} -\Delta{u_n} = -v_n u_n \mathds{1}_{\omega}, \quad &\mbox{in} \ Q, \\ 
    u_n = 0, \quad &\mbox{on} \ \Sigma,\\
    u_n(0) = u_0,\quad &\mbox{in} \ \Omega.
  \end{cases}
\end{equation}
By multiplying \eqref{pareto-29} by $u_n$ and integrating over $\Omega,$ we have:
\begin{align*}
    \frac{1}{2}\frac{d}{dt}\|u_n\|_{L^2(\Omega)}^2 + \frac{1}{2}\| \nabla u_n\|_{L^2(\Omega)^2}^2 &= -\int_\om v_n\left| u_n \right|^2 \ dx \\
	&\leqslant C\|v_n\|_{L^2(\omega)}\|u_n\|_{L^4(\Omega)}^2\\
	&\leqslant C\|v_n\|_{L^2(\omega)}\|u_n\|_{L^2(\Omega)}\| \nabla u_n\|_{L^2(\Omega)^2} \\
	&\leqslant \frac{1}{4}\| \nabla u_n\|^2_{L^2(\Omega)} + C\|v_n\|^2_{L^2(\omega)}\|u_n\|^2_{L^2(\Omega)}.
\end{align*}
Integrating the last inequality in time, from $0$ to $t,$ we obtain
\begin{equation}\label{pareto-30}
 \|u_n\|^2_{L^2(\Omega)} + \int_0^t \| \nabla u_n\|^2_{L^2(\Omega)^2} \ dt \leq \|u_0\|^2_{L^2(\Omega)} + C\int_0^t \|v_n\|_{L^2(\omega)}^2\|u_n\|_{L^2(\Omega)}^2 \ dt 
\end{equation}
which, thanks to Gronwall's Lemma, gives
\begin{equation}\label{pareto-31}
	\|u_n\|_{L^{\infty}(0,T;L^2(\Omega))} + \|u_n\|_{L^2(0,T; H_0^1(\Omega))} \leq C.
\end{equation}
It is not difficult to deduce the estimates

\begin{equation}\label{pareto-32}
	\| v_n u_n \mathds{1}_{\omega}\|_{L^2\left( 0,T;L^{1}(\Omega) \right)} \leq C\|v_n\|_{L^2(\omega\times(0,T))}\|u_n\|_{L^\infty(0,T;L^2(\Omega))} \leqslant C.
\end{equation}
From \eqref{pareto-29}, \eqref{pareto-31} and \eqref{pareto-32} we find that, for any $\alpha<2$, 
\begin{equation}\label{pareto-33}
	\|u_{n,t}\|_{L^2(0,T;W^{-1, \alpha}(\Omega))} \leq C.
\end{equation}
Then, from \eqref{pareto-30} and \eqref{pareto-33}, we deduce that the sequence $\left\{ u_n \right\}_{n \geqslant 1}$ is confined to a fixed compact set in $L^2(Q).$ Consequently, at least for a subsequence, we can assume that $u_n$ converges to the state $u$ corresponding to $v$ strongly in $L^2(Q).$  From the previous estimates, we see that the convergence also takes place weakly in $L^2\left(0,T;H^1_0\left(\Om\right) \right)$ and weakly$-*$ in $L^\infty\left( 0,T; L^2\left( \Om \right) \right).$ Furthermore, from \eqref{pareto-33}, we have that $u\in C_W^0([0,T];L^2(\Om))$ and $u_n(T)$ converges to $u(T)$ weakly in $L^2(\Om)$.

From this, we conclude that

$$
\liminf_{n\rightarrow \infty} J_{\left(\alpha\right)}(v_n) \geqslant J_{\left( \alpha\right)}(v).
$$ 
This ends the proof.
\end{proof}

\begin{theorem}\label{th.5}
Let us suppose that $N=2,$ $\mu > 0$ and $\hat{v}$ is a Pareto quasi-equilibrium corresponding to $\alpha \in \left(0,1\right),$ i.e.
$$
\left\langle J_{\left(\alpha\right)}^\prime (\hat{v} ),w-\hat{v} \right\rangle \geqslant 0 \hspace{1.0cm} \forall w \in \mathcal{U}.
$$ 
There exists $k > 0,$ depending only on $\Om,\, T,\, \|\hat{v}\|_{L^2\left( \om \times \left(0,T\right)\right)},\, \left\|u_{01}\right\|$ and $\left\|u_{02}\right\|$ such that, if $\mu > k\|u_0\|^2,$ then $\hat{v}$ is a Pareto equilibrium.
\end{theorem}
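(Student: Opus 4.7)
The plan is to show that, under the hypothesis $\mu > k\|u_0\|^2$, the second G\^ateaux derivative of $J_{(\alpha)}$ at $\hat v$ is non-negative along every admissible direction. Once this is established, the variational inequality defining $\hat v$ as a Pareto quasi-equilibrium is upgraded to a global minimization of $J_{(\alpha)}$ over $\mathcal{U}$, and the analogue of Proposition \ref{prop:ParetoQuasiEqToEq} for the bilinear case (valid for $\alpha \in (0,1)$) then gives that $\hat v$ is a Pareto equilibrium. The overall line of reasoning mirrors that of Theorem \ref{th.3} for the semilinear case, but replaces the semilinear Sobolev estimates by $N=2$-specific ingredients tailored to the bilinear state equation.

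To begin, I would obtain a workable expression for $J''_{(\alpha)}(\hat v; h, h)$. For $h \in L^2(\omega\times(0,T))$, let $y$ denote the linearized state,
\begin{equation*}
y_t - \Delta y + \hat v\, y\, \mathds{1}_\omega = -\hat u\, h\, \mathds{1}_\omega \text{ in } Q,\quad y=0 \text{ on } \Sigma,\quad y(0)=0,
\end{equation*}
and let $\varphi := \alpha\varphi_1 + (1-\alpha)\varphi_2$, where each $\varphi_i$ solves the backward adjoint $-\varphi_{i,t} - \Delta\varphi_i + \hat v\,\varphi_i\,\mathds{1}_\omega = 0$ in $Q$ with $\varphi_i|_\Sigma=0$ and $\varphi_i(T) = (\hat u(T) - u_i(T))\mathds{1}_{\mathcal{O}_i}$. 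Differentiating $\langle J'_{(\alpha)}(\hat v + \epsilon h), h\rangle$ at $\epsilon = 0$ and using an integration by parts to eliminate the second-order derivative of the state (analogous to the calculation leading to \eqref{pareto-23}), one arrives at
\begin{equation*}
J''_{(\alpha)}(\hat v; h, h) = \int_\Omega |y(T)|^2 \bigl(\alpha\mathds{1}_{\mathcal{O}_1} + (1-\alpha)\mathds{1}_{\mathcal{O}_2}\bigr)\,dx - 2\iint_{\omega\times(0,T)}\varphi\,h\,y\,dx\,dt + \mu\|h\|^2_{L^2(\omega\times(0,T))}.
\end{equation*}
The first term is non-negative, so the problem reduces to bounding $|\iint \varphi\,h\,y|$ by $C\|u_0\|^2\|h\|^2$.

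For this, I would estimate the cross term via H\"older's inequality with exponents $(4,2,4)$,
\begin{equation*}
\left|\iint_{\omega\times(0,T)} \varphi\,h\,y\,dx\,dt\right| \leq \|\varphi\|_{L^4(Q)}\,\|h\|_{L^2(\omega\times(0,T))}\,\|y\|_{L^4(Q)},
\end{equation*}
and control the $L^4(Q)$-norms through the two-dimensional Ladyzhenskaya embedding $L^\infty(0,T;L^2(\Omega)) \cap L^2(0,T;H^1_0(\Omega)) \hookrightarrow L^4(Q)$. Bilinear energy estimates of the type developed in the proof of Theorem \ref{thm:Bilinear1stMainThm}, applied successively to $\hat u$, then to $\varphi_i$ (backward in time, with terminal datum bounded by $\|\hat u(T)\| + \|u_{0i}\|$), and finally to $y$ with source $-\hat u\,h\,\mathds{1}_\omega$, yield constants $C = C(\Omega,T,\|\hat v\|_{L^2(\omega\times(0,T))})$ such that
\begin{equation*}
\|\hat u\|_{L^\infty(0,T;L^2)\cap L^2(0,T;H^1_0)} \leq C\|u_0\|,\quad \|\varphi_i\|_{L^4(Q)} \leq C(\|u_0\|+\|u_{0i}\|),\quad \|y\|_{L^4(Q)} \leq C\,\|u_0\|\,\|h\|_{L^2(\omega\times(0,T))}.
\end{equation*}
Substituting and absorbing the cross products $\|u_0\|\|u_{0i}\|$ via Young's inequality into $\|u_0\|^2$ (with residual coefficients depending on $\|u_{01}\|$, $\|u_{02}\|$) produces $|2\iint \varphi h y| \leq k\|u_0\|^2\|h\|^2$ for some $k = k(\Omega, T, \|\hat v\|, \|u_{01}\|, \|u_{02}\|)$. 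Hence $J''_{(\alpha)}(\hat v;h,h) \geq (\mu - k\|u_0\|^2)\|h\|^2$, and combined with the first-order quasi-equilibrium condition and the coercivity of $J_{(\alpha)}$, a standard convex-analysis argument (reducing to the $L^2$-ball whose radius is controlled by coercivity) yields that $\hat v$ globally minimizes $J_{(\alpha)}$ on $\mathcal{U}$.

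The main technical obstacle will be closing the energy estimate $\|y\|_{L^4(Q)} \leq C\|u_0\|\|h\|$: the source $-\hat u\,h\,\mathds{1}_\omega$ couples an energy-class function to an $L^2$-perturbation, and the Gr\"onwall loop must handle both the drift contribution $\hat v\,y^2$ and the source cross term $\hat u\,h\,y$ via the Ladyzhenskaya inequality $\|w\|_{L^4(\Omega)} \leq C\|w\|^{1/2}\|w\|^{1/2}_{H^1_0}$. This is precisely where the restriction $N=2$ is indispensable; in higher dimensions the corresponding embedding degrades, and closing the scaling in the form $k\|u_0\|^2$ requires supplementary integrability on $v$, consistent with the $L^\infty$ assumption used for $N=3$ elsewhere in this section.
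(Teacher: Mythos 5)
Your proposal follows essentially the same route as the paper's proof: reduce to convexity of $J_{(\alpha)}$ on a ball via coercivity, compute $J''_{(\alpha)}$ through the linearized state and the adjoint, use the duality identity between them to isolate a non-negative terminal term, and bound the remaining cross term by $C\|u_0\|^2\|h\|^2$ using $L^4(Q)$-H\"older together with the two-dimensional embedding $L^\infty(0,T;L^2(\Omega))\cap L^2(0,T;H^1_0(\Omega))\hookrightarrow L^4(Q)$. The only cosmetic difference is that you keep the cross term $\iint \varphi\, h\, y$, whereas the paper keeps the equivalent term $\iint \psi\, u\,(w-v)$ involving the second-order adjoint $\psi$; the two are interchangeable through the same duality identity and require the same estimates.
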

\begin{proof}
We only have to prove that $\hat{v}$ minimizes $J_{\left( \alpha \right)}.$ First, note that, since $J_{\left( \alpha \right)}$ is coercive, there exists $R> 0$ such that, if $v\in \mathcal{U}$ and $||v||_{L^2(\om \times (0,T))} > R$, then 
$$
J_{\left(\alpha\right)}(v) \geqslant J_{\left(\alpha\right)}(0).
$$

Let us denote by $B_R$ the closed ball in $L^2(\om \times (0,T))$ of radius $R$. It will suffice to prove that, for any $R>0$, there exists $\mu_0 = \mu_0 (\Om, T, R, ||u_{01}||, ||u_{02}||)$ such that, if $\mu >\mu_0$, then $J_{(\alpha)}$ is convex in $\mathcal{U}_R := \mathcal{U}\cap B_R$. To this purpose, let us check that, for an appropriate $\mu_0$, one has

\begin{equation}\label{pareto-34}
 J_{\left(\alpha\right)}^{\prime\prime}(v;w-v,w-v) \geqslant (\mu - \mu_0)\|w-v\|^2_{L^2(\omega\times(0,T))} \quad \forall v\in \mathcal{U}_R, \quad \forall w\in \mathcal{U}.
\end{equation}

If we denote by $z$ the G\^ateaux derivative of the state associated to the control $v$ in the direction of $w-v,$ we have:
$$
\begin{cases}
z_t -\Delta{z} = -(w-v) u \mathds{1}_{\omega} -v z \mathds{1}_{\omega}, \quad &\mbox{in} \ Q , \\
z = 0, \quad &\mbox{on} \ \Sigma,\\
z(0) = 0,\quad &\mbox{in} \ \Omega.
\end{cases}
$$
Carrying out standard arguments, we see that $J_{\left(\alpha \right)}$ is differentiable, with
$$
\left\langle J_{\left(\alpha\right)}^\prime\left(v\right),w-v\right\rangle = \iint_{\om\times\left(0,T\right)} \left(\varphi u + \mu v\right)(w-v)\,dx\,dt,
$$
where the adjoint state $\varphi$ satisfies
\begin{equation} \label{eq:Bilinear_AdjointPDEs}
    \begin{cases}
-{\varphi}_{ t} -\Delta{{\varphi}}  = -v \varphi \mathds{1}_{\omega}, \quad &\mbox{in} \ Q \\
{\varphi} = 0, \quad &\mbox{on} \ \Sigma,\\
{\varphi}(T) = \alpha( u(T) - u_1(T))\mathds{1}_{\mathcal{O}_1} + (1-\alpha)( u(T) - u_2(T))\mathds{1}_{\mathcal{O}_2} ,	\quad &\mbox{in} \ \Omega.
\end{cases}
\end{equation}
Therefore, $J_{(\alpha)}$ is twice differentiable and the following identities hold (under self-explanatory notations):
\begin{align}\label{pareto-38}
  \begin{split}
	J_{\left(\alpha\right)}^{\prime\prime}(v;w-v,w-v) \\
	&:= \lim_{\epsilon \to 0} \frac{1}{\epsilon}\left( \left\langle J_{\left(\alpha\right)}^\prime(v+\epsilon (w-v)),w-v\right\rangle -\left\langle J_{\left(\alpha\right)}^\prime(v),w-v\right\rangle \right) \\
	 &= \lim_{\epsilon \to 0} \frac{1}{\epsilon} \iint_{\omega\times(0,T)}\left\{ \left[ \varphi_\epsilon{u_\epsilon} +\mu(v+\epsilon(w-v)) \right] - ( \varphi{u} +\mu{v} ) \right\}(w-v) \ dx\,dt \\
	 &= \lim_{\epsilon \to 0} \frac{1}{\epsilon}\iint_{\omega\times(0,T)} [\varphi_\epsilon(u_\epsilon-u) + (\varphi_\epsilon-\varphi)u](w-v) \ dxdt + \mu\iint_{\omega\times(0,T)} |w-v|^2 \ dx\,dt.
  \end{split}
\end{align}
Let us introduce $z_\epsilon := \epsilon^{-1}\left( u_\epsilon - u\right)$ and $\psi_\epsilon := \epsilon^{-1}\left(\varphi_\epsilon -\varphi\right).$ Then, $z_\epsilon$ and $\psi_\epsilon$ respectively satisfy
\begin{equation}\label{pareto-39}
  \begin{cases}
    z_{\epsilon ,t} -\Delta{z_\epsilon} = - [ v+\epsilon{(w-v)} ]{z_\epsilon}\mathds{1}_{\omega} - (w-v) u \mathds{1}_{\omega} , \quad &\mbox{in} \ Q , \\
    z_\epsilon = 0, \quad &\mbox{on} \ \Sigma ,\\
    z_\epsilon(0) = 0,\quad &\mbox{in} \ \Omega
  \end{cases}
\end{equation}
and
\begin{equation}\label{pareto-40}
  \begin{cases}
    -{\psi}_{\epsilon ,t} -\Delta{{\psi_\epsilon}} =  - (v+\epsilon{(w-v)}) \psi_\epsilon \mathds{1}_{\omega} - (w-v) \varphi \mathds{1}_{\omega}, \quad &\mbox{in} \ Q, \\
    {\psi_\epsilon} = 0, \quad &\mbox{on} \ \Sigma,\\
    {\psi_\epsilon}(T) =  \alpha z_\epsilon(T)\mathds{1}_{\mathcal{O}_1} + (1-\alpha)z_\epsilon(T)\mathds{1}_{\mathcal{O}_2},	\quad &\mbox{in} \ \Omega.
  \end{cases}
\end{equation}
Furthermore, $z_\epsilon \to z$ and $\psi_\epsilon \to \psi$ strongly in $L^2\left(Q\right),$ where $z$ and $\psi$ solve the coupled PDE system
\begin{equation}\label{pareto-41}
  \begin{cases}
    z_t -\Delta{z} = -v z\mathds{1}_{\omega} - (w-v) u\mathds{1}_{\omega}, \quad &\mbox{in} \ Q, \\
    -\psi_t -\Delta{\psi} = -v \psi \mathds{1}_{\omega} - (w-v) \varphi \mathds{1}_{\omega}, \quad &\mbox{in} \ Q, \\
    z = 0 \text{ and } \psi = 0, \quad &\mbox{on} \ \Sigma,\\
    z(0) = 0 \text{ and } {\psi}(T) =  (\alpha \mathds{1}_{\mathcal{O}_1} + (1-\alpha)\mathds{1}_{\mathcal{O}_2})z(T), \quad &\mbox{in} \ \Omega .
  \end{cases}
\end{equation}
Consequently, from \eqref{pareto-38}, the following is found:
\begin{equation}\label{pareto-42}
	J_{\left(\alpha\right)}^{\prime\prime} (v; w-v,w-v) = \iint_{\omega\times(0,T)} (\varphi{z} + \psi{u})(w-v) \ dx\,dt + \mu\iint_{\omega\times(0,T)} |w-v|^2 \ dx\,dt.
\end{equation}
We also observe from \eqref{pareto-41} that
\begin{equation}\label{pareto-43}
\iint_{\omega\times(0,T)} \varphi z (w-v)  \ dx\,dt = \iint_{\omega\times(0,T)} \psi  u(w-v) \, dx\,dt + \int_\Omega (\alpha \mathds{1}_{\mathcal{O}_1} + (1-\alpha)\mathds{1}_{\mathcal{O}_2})|z(T)|^2\,dx,
\end{equation}
whence
\begin{equation}\label{pareto-44}
	 J_{\left(\alpha\right)}^{\prime\prime}(v;w-v,w-v) \geq 2\iint_{\omega\times(0,T)} \psi u (w-v) \ dx\,dt + \mu\iint_{\omega\times(0,T)} |w-v|^2 \, dx\,dt.
\end{equation}
Moreover, we have the estimate
$$
\left| \int_{\om \times \left(0,T\right)} \psi (w-v) u \,dx\,dt \right| \leqslant \|\psi\|_{L^4(Q)}\|w-v\|_{L^2\left(\om\times\left(0,T\right)\right)}\|u\|_{L^4(Q)}. 
$$
Arguing as in the proof of Theorem \ref{thm:Bilinear1stMainThm}, we also find that
$$
\|u\|_{L^4(Q)} + \|\varphi\|_{L^4(Q)} \leqslant C\left( \Om,\,T,\,R,\,\|u_{01}\|,\,\|u_{02}\| \right)\|u_0\|
$$
and
$$
\|\psi\|_{L^4(Q)} \leqslant C\left( \Om,\,T,\,R,\,\|u_{01}\|,\,\|u_{02}\| \right)\|u_0\|\|w-v\|_{L^2\left( \om \times \left(0,T\right) \right)}.
$$
Consequently,
\begin{align}
    \begin{split}
        J_{\left(\alpha\right)}^{\prime\prime} (v;w-v,w-v) &\geqslant \mu \iint_{\om \times \left(0,T\right)}|w-v|^2\,dx\,dt - 2\|\psi\|_{L^4(Q)}\|w-v\|_{L^2\left(\om\times\left(0,T\right)\right)}\|u\|_{L^4(Q)} \\
        &\geqslant \left[ \mu - C\left( \Om,\,T,\,R,\,\|u_{01}\|,\,\|u_{02}\| \right)\|u_0\|^2 \right] \|w-v\|_{L^2\left( \om \times \left(0,T\right) \right)}^2.
    \end{split}
\end{align}
This ends the proof.
\end{proof}

As a consequence of the proof of Theorem \ref{thm:Bilinear1stMainThm}, we obtain the following result.
\begin{corollary}
Let us assume that $N=2.$ For each $R>0,$ there exists $\mu_0 > 0$ such that, if $\mu > \mu_0,$ then the restriction of the functional $J_{\left(\alpha\right)}$ to the set
$$
\mathcal{U}_R = \left\{ v \in \mathcal{U} : \|v\|_{L^2\left( \om \times \left(0,T\right)\right)} \leqslant R \right\}
$$
is $(\mu-\mu_0)-$convex.
\end{corollary}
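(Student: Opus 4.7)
The plan is to read off the corollary directly from the key second‐derivative estimate already obtained in the proof of the preceding theorem. Indeed, in that proof we established, for every $v \in \mathcal{U}_R$ and every $w \in \mathcal{U}$, the inequality
$$
J_{(\alpha)}^{\prime\prime}(v; w-v, w-v) \geqslant \left[ \mu - C\!\left(\Om, T, R, \|u_{01}\|, \|u_{02}\| \right) \|u_0\|^2 \right] \|w-v\|^2_{L^2(\om \times (0,T))}.
$$
Setting $\mu_0 := C\!\left(\Om, T, R, \|u_{01}\|, \|u_{02}\| \right) \|u_0\|^2$, this gives the lower bound
$$
J_{(\alpha)}^{\prime\prime}(v; w-v, w-v) \geqslant (\mu - \mu_0)\|w - v\|^2_{L^2(\om\times(0,T))},
$$
uniformly over $v \in \mathcal{U}_R$ and $w \in \mathcal{U}$.

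Next, I would use that $\mathcal{U}_R = \mathcal{U} \cap B_R$ is convex (intersection of two convex sets), so that for any $v_0, v_1 \in \mathcal{U}_R$ the whole segment $v_s := (1-s)v_0 + s v_1$ lies in $\mathcal{U}_R$ for $s \in [0,1]$. Writing $g(s) := J_{(\alpha)}(v_s)$ and applying the chain rule twice, one has
$$
g^{\prime\prime}(s) = J_{(\alpha)}^{\prime\prime}(v_s; v_1 - v_0, v_1 - v_0) \geqslant (\mu - \mu_0)\|v_1 - v_0\|^2_{L^2(\om\times(0,T))}
$$
for every $s \in [0,1]$, by applying the previous estimate at $v = v_s \in \mathcal{U}_R$ with $w - v = v_1 - v_0$ (which is legitimate since $v_s + (v_1 - v_0) \in \mathcal{U}$ for at least the appropriate shift, but more robustly since the quadratic form only depends on the direction and the reference point $v_s \in \mathcal{U}_R$).

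Integrating twice in $s$ from the standard identity $g(s) - (1-s)g(0) - s g(1) = -\int_0^1 k(s,\sigma)\, g^{\prime\prime}(\sigma)\, d\sigma$ with a non‐negative kernel $k(s,\sigma)$ satisfying $\int_0^1 k(s,\sigma)\,d\sigma = \tfrac{1}{2}s(1-s)$, one obtains
$$
J_{(\alpha)}\bigl((1-s)v_0 + s v_1\bigr) \leqslant (1-s) J_{(\alpha)}(v_0) + s J_{(\alpha)}(v_1) - \tfrac{1}{2}(\mu - \mu_0)\, s(1-s)\, \|v_1 - v_0\|^2_{L^2(\om\times(0,T))},
$$
which is exactly the definition of $(\mu - \mu_0)$-convexity on $\mathcal{U}_R$, and this completes the proof.

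The content is entirely routine once the key second‐derivative bound is available; the only mild subtlety, and what I would be careful to justify, is that the bound proved earlier for $v \in \mathcal{U}_R$ and $w \in \mathcal{U}$ suffices to conclude convexity on $\mathcal{U}_R$, which indeed follows because along any segment inside $\mathcal{U}_R$ one can apply the estimate at the base point $v_s \in \mathcal{U}_R$.
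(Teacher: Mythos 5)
Your proposal is correct and follows essentially the same route as the paper, which states this corollary as an immediate consequence of the uniform second-derivative lower bound $J_{(\alpha)}^{\prime\prime}(v;w-v,w-v)\geqslant(\mu-\mu_0)\|w-v\|^2_{L^2(\om\times(0,T))}$ established in the proof of the preceding theorem (the paper's pointer to the first existence theorem rather than to that estimate appears to be a slip). Your explicit handling of the segment argument --- in particular noting that the quadratic form's homogeneity lets you pass from directions of the form $w-v_s$ with $w\in\mathcal{U}$ to the fixed direction $v_1-v_0$ --- merely fills in routine details the paper leaves implicit.
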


The proof of the previous results has the dimensionality assumption $N=2$ as an essential restriction. Indeed, the key embedding $$W^{-1,\alpha}(\Om)\hookrightarrow L^1(\Om)$$ employed in \ref{pareto-33} does not hold in the three-dimensional setting. However, if we impose to the admissible control set to be a subset of $L^\infty(\om\times (0,T))$, we can obtain results which are still valid for $N=3$.

\begin{theorem}
Let us assume that $\mathcal{U} \subseteq \left\{ v \in L^2(\om \times (0,T)):\ |v|\leqslant R\  \text{a.e. in}\ Q \right\}$ for some $R>0.$ Then:

(a) For each $\alpha \in \left[0,1\right],$ the functional $J_{\left(\alpha\right)}$ attains its minimum in $\mathcal{U}$.

(b) If $N\leqslant 3,$ $u_0,u_{01},u_{02} \in H^1_0(\Om) $ and $\mu$ is sufficiently large, any Pareto quasi-equilibrium $\hat{v}$ corresponding to an $\alpha \in \left(0,1\right)$ is a Pareto equilibrium.
\end{theorem}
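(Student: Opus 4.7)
\textbf{Part (a): existence of a minimizer.} My plan is the standard direct method. Any $v\in\mathcal{U}$ satisfies $\|v\|_{L^2(\om\times(0,T))}\leqslant R|\om|^{1/2}T^{1/2}$, so a minimizing sequence $\{v_n\}\subset\mathcal{U}$ is bounded in $L^2(\om\times(0,T))$ and admits a subsequence with $v_n \rightharpoonup \hat v$ weakly; since $\mathcal{U}$ is convex and closed in $L^2$ it is weakly closed, and the pointwise bound $|v|\leqslant R$ is preserved under weak limits, so $\hat v\in\mathcal{U}$. Let $u_n$ be the state associated with $v_n$. Using the lemma for $N\leqslant 3$ with $L^\infty$ controls, the $u_n$ are uniformly bounded in $L^\infty(0,T;L^2(\Om))\cap L^2(0,T;H^1_0(\Om))$ with $u_{n,t}$ bounded in $L^2(0,T;H^{-1}(\Om))$, so Aubin--Lions gives a subsequence $u_n\to u$ strongly in $L^2(Q)$. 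The bilinear term passes to the limit because $v_n u_n \rightharpoonup \hat v\, u$ in $\mathcal{D}'(Q)$ (strong $\times$ weak), and $u_n(T)\rightharpoonup u(T)$ weakly in $L^2(\Om)$ by the time-continuity in a weaker norm. Finally, weak lower semi-continuity of $v\mapsto \tfrac{\mu}{2}\|v\|^2_{L^2}$ and of $v\mapsto \int_{\mathcal{O}_i}|u(T)-u_i^T|^2\,dx$ (the latter via convexity of the square applied to the weak $L^2(\Om)$ limit $u(T)$) yields $J_{(\alpha)}(\hat v)\leqslant \liminf_n J_{(\alpha)}(v_n)$, so $\hat v$ is the minimizer.

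\textbf{Part (b): quasi-equilibrium $\Rightarrow$ equilibrium for large $\mu$.} I would mimic the strategy of Theorem~\ref{th.5}: show that $J_{(\alpha)}$ is convex on $\mathcal{U}$ when $\mu$ is large, so that any Pareto quasi-equilibrium is a minimizer of $J_{(\alpha)}$, and then invoke the analogue of Proposition~3.3(b) (valid because $\alpha\in(0,1)$) to conclude it is a Pareto equilibrium. Fix $v\in\mathcal{U}$ and $w\in\mathcal{U}$, define $z,\psi$ by the linearized/adjoint systems \eqref{pareto-41} and reuse the identity \eqref{pareto-44}
\[
    J_{(\alpha)}''(v;w-v,w-v) \;\geqslant\; 2\iint_{\om\times(0,T)}\psi\, u\,(w-v)\,dx\,dt + \mu\iint_{\om\times(0,T)}|w-v|^2\,dx\,dt.
\]
The goal is an estimate
\[
    \left|\iint_{\om\times(0,T)}\psi\,u\,(w-v)\,dx\,dt\right| \;\leqslant\; C(\Om,T,R,\|u_{01}\|,\|u_{02}\|)\,\bigl(1+\|u_0\|_{H^1_0}\bigr)^{2}\|w-v\|_{L^2(\om\times(0,T))}^2,
\]
which will yield $J_{(\alpha)}''\geqslant(\mu-\mu_0)\|w-v\|_{L^2}^2$ and therefore convexity for $\mu>\mu_0$.

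\textbf{Where the regularity hypotheses enter.} The assumption $|v|\leqslant R$ a.e.\ together with $u_0\in H^1_0(\Om)$ lets me multiply the state equation by $-\Delta u$ and obtain $u\in L^\infty(0,T;H^1_0(\Om))\cap L^2(0,T;H^2(\Om))$; since $N\leqslant 3$, Sobolev gives $u\in L^\infty(0,T;L^6(\Om))$ and in particular $u\in L^4(Q)$. Because $u_{01},u_{02}\in H^1_0(\Om)$, the terminal datum of the adjoint $\varphi$ in \eqref{eq:Bilinear_AdjointPDEs} lies in $L^6(\Om)$, so the parabolic $L^p$ theory for the equation $-\varphi_t-\Delta\varphi=-v\varphi\mathds{1}_\om$ with $v\in L^\infty$ yields $\varphi\in L^\infty(0,T;L^6(\Om))\subset L^4(Q)$; similarly, linear energy estimates for \eqref{pareto-41} produce $\|z\|_{L^\infty(0,T;L^2(\Om))}+\|\psi\|_{L^4(Q)}\leqslant C\|u_0\|_{H^1_0}\|w-v\|_{L^2(\om\times(0,T))}$. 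With these bounds in hand, H\"older gives
$\bigl|\iint \psi u(w-v)\bigr|\leqslant \|\psi\|_{L^4(Q)}\|u\|_{L^4(Q)}\|w-v\|_{L^2(\om\times(0,T))}$, closing the estimate.

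\textbf{Expected main obstacle.} The tricky step is promoting $\varphi$ and $\psi$ from the basic energy regularity $L^\infty_t L^2_x\cap L^2_t H^1_x$ (which in $N=3$ only embeds into $L^{10/3}(Q)$, falling just short of $L^4(Q)$) up to $L^4(Q)$. This is exactly what the added hypothesis $u_{0i}\in H^1_0(\Om)$ is designed to enable, through parabolic $L^p$ estimates for the adjoint and the linearization; implementing these estimates carefully in dimension three, with an $L^\infty$-coefficient on the zeroth-order term, is the technical heart of the proof.
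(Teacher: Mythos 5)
Your argument is correct and, in substance, it is the paper's own: part (a) is the same weak-lower-semicontinuity argument (uniform energy bounds from $|v^n|\leqslant R$ plus Gronwall, then weak convergence of $u^n(T)$ in $L^2(\Om)$), and part (b) follows the identical strategy of proving convexity of $J_{(\alpha)}$ on $\mathcal{U}$ for large $\mu$ starting from the second-derivative lower bound \eqref{pareto-44}. The only real divergence is how the trilinear term $\iint_{\om\times(0,T)}\psi\,u\,(w-v)\,dx\,dt$ is closed. You split it as $L^4(Q)\times L^4(Q)\times L^2$ and therefore must push $\varphi$ and $\psi$ past the $L^{10/3}(Q)$ barrier, which you propose to do with parabolic $L^p$ theory; the paper instead uses the split $\|u\|_{L^6(Q)}\|\psi\|_{L^3(Q)}\|w-v\|_{L^2}$, so that both factors are controlled by $L^\infty(0,T;H^1_0(\Om))$ norms through the single embedding $H^1_0(\Om)\hookrightarrow L^6(\Om)$ (see \eqref{eq:Est1BilinearUnifBdd}--\eqref{eq:Est2BilinearUnifBdd}), and no $L^p$ parabolic machinery is needed. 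What your route buys is robustness at one delicate point: the terminal data $\varphi(T)$ and $\psi(T)$ in \eqref{eq:Bilinear_AdjointPDEs} and \eqref{pareto-41} are $H^1$-type functions multiplied by $\mathds{1}_{\mathcal{O}_i}$, hence in general only in $L^6(\Om)$ and not in $H^1_0(\Om)$; your $L^6$-based estimates apply directly to such data, whereas the paper's stated bounds on $\sup_t\|\nabla\varphi\|$ and $\sup_t\|\nabla\psi\|$ need some extra justification for these non-$H^1$ terminal values. Either way, the quantitative conclusion $J''_{(\alpha)}(v;w-v,w-v)\geqslant(\mu-C)\|w-v\|^2_{L^2(\om\times(0,T))}$ and the final step (convexity plus the variational inequality imply that $\hat v$ minimizes $J_{(\alpha)}$, and a minimizer with $\alpha\in(0,1)$ is a Pareto equilibrium) coincide with the paper's.
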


\begin{proof}
($a$) It suffices to check that $J_{\left(\alpha\right)}$ is sequentially weakly lower semicontinuous. Let us assume that $v^n \in \mathcal{U}$ for all $n \geqslant 1$ and $v^n \rightharpoonup v$ weakly in $L^2\left( \om \times \left( 0,T \right) \right).$ Let $u^n$ denote the solution corresponding to $v^n.$ It is then clear that 
$$
\frac{1}{2}\frac{d}{dt}\left[ \int_\Om \left|u^n\right|^2\,dx \right] + \int_\Om \left| \nabla u^n \right|^2\,dx \leqslant R \int_\Om \left| u^n\right|^2\,dx
$$
and, from Gronwall's Lemma, we get
$$
\sup_{\left[0,T\right]}\|u^n\|^2 + \|\nabla u^n\|^2 \leqslant C \|u_0\|^2,
$$
for a constant $C$ depending only on $\Om,\,T$ and $R.$ Now, standard arguments allow to deduce that, at least for a subsequence, $u^n(T)$ converges to $u(T)$ weakly in $L^2(\Om),$ where $u$  is the state corresponding to $v.$ Therefore, we conclude that $J_{\left( \alpha \right)}(v) \leqslant \liminf_{n \rightarrow \infty} J_{\left( \alpha \right)}(v^n),$ as desired.

($b$) Proceeding as in Theorem \ref{th.5}, see \eqref{pareto-44}, we have:
\begin{equation} \label{eq:SecondDeriv_UnifBddControls}
     J_{\left(\alpha\right)}^{\prime\prime} ( v; w-v,w-v) \geq 2\iint_{\omega\times(0,T)} \psi u (w-v)\ dx\,dt + \mu\iint_{\omega\times(0,T)} |w-v|^2 \, dx\,dt,
\end{equation}
where $u$ is the state corresponding to $v$ and $(z,\psi)$ solves \eqref{pareto-41}. Next, we observe that
\begin{equation} \label{eq:Est1BilinearUnifBdd}
    \sup_{\left[0,T\right]}\left(  \|\nabla u\| + \|\nabla \varphi\| \right)\leqslant C\left( \Om,\, T,\, R,\, \|u_{01}\|_{H^1_0(\Om)},\,\|u_{02}\|_{H^1_0(\Om)} \right)\|u_0\|_{H^1_0(\Om)}.
\end{equation}
Moreover, we can also infer from \eqref{pareto-41} the following inequality
\begin{equation} \label{eq:Est2BilinearUnifBdd}
    \sup_{\left[0,T\right]}\|\nabla \psi\| \leqslant C\left( \Om,\, T,\, R,\, \|u_{01}\|_{H^1_0(\Om)},\,\|u_{02}\|_{H^1_0(\Om)} \right)\|u_0\|_{H^1_0(\Om)}\|w-v\|_{L^2\left( \om \times \left(0,T\right) \right)}.
\end{equation}
For brevity, let us denote $L^\infty(0,T;H^1_0(\Om))$ by $W.$ With the help of the embedding $H^1(\Om) \hookrightarrow L^6(\Om)$ (valid for $N\leqslant 3$), we conclude from \eqref{eq:SecondDeriv_UnifBddControls}, \eqref{eq:Est1BilinearUnifBdd} and \eqref{eq:Est2BilinearUnifBdd} what follows:
\begin{align*}
     J_{\left(\alpha\right)}^{\prime\prime} (v;w-v,w-v) &\geqslant \mu \|w-v\|^2_{L^2\left( \om \times \left(0,T\right) \right)} - \|w-v\|_{L^2\left( \om \times \left(0,T\right) \right)} \|u\|_{L^6(Q)}\|\psi\|_{L^3(Q)} \\
    &\geqslant \mu \|w-v\|^2_{L^2\left( \om \times \left(0,T\right) \right)} - C\|w-v\|_{L^2\left( \om \times \left(0,T\right) \right)} \|u\|_{W}\|\psi\|_{W} \\
    &\geqslant \left( \mu - C\right) ||w-v||^2_{L^2(\om \times (0,T))}.
\end{align*}
This ends the proof.
\end{proof}

\section{Computation of equilibria (I): Algorithms}\label{sec:Numerics1}

\subsection{Linear problem} \label{subsec:Linear_Algos}
We refer here to the computation of Pareto equilibria for (\ref{def:ObjFuncs}) and (\ref{pareto-1}). Recall that the associated optimality systems are (\ref{pareto-1}), (\ref{pareto-5}), (\ref{paretoeq_linear_mupos}) for $\alpha \in [0,1]$.
\subsubsection{The Conjugate Gradient Algorithm (CGA)} \label{subsubsec:Linear_ConjGrad}

We first recall an algorithm intended to solve the problem
\begin{equation}\label{eq:LinearVariationalProblem}
a(u,v)=L(v), \quad \forall v\in V,\ u\in V,
\end{equation}
where $V$ is a Hilbert space with norm and scalar product respectively denoted by $||\cdot||_V$ and $(\cdot,\cdot)_V$, $a:V\times V \to \mathbb{R}$ is a symmetric continuous and coercive bilinear form and $L:V\to \mathbb{R}$ is a continuous linear form. The classical Lax-Milgram Lemma ensures that \eqref{eq:LinearVariationalProblem} has a unique solution. It can be found via the so called (optimal step) conjugate gradient algorithm:
\begin{algorithm} \label{algo:ConjugateGradientLinearProblem}
\SetAlgoLined
\KwResult{The solution $u$ of problem \eqref{eq:LinearVariationalProblem}.}
Initialize by providing the tolerance $\epsilon_0,$ the error variable $\epsilon,$ $k=0,$ an initial guess for the state variable $u^0 \in V,$ the solution $g^0$ to the problem
$$
\left( g^0,v \right)_V=a(u^0,v)-L(v) \quad  \forall v \in V,\ g^0\in V
$$
and $z^0 := g^0.$\\
\While{$\epsilon > \epsilon_0,$}{
$1.$ Set $\rho^k := \|g^k\|^2/a(z^k,z^k);$\\
$2.$ Update $u^{k+1} \gets u^k-\rho^k z^k;$\\
$3.$ Compute the solution $g^{k+1}$ to the problem
$$
\left( g^{k+1},v \right)_V=a(g^k,v)-\rho^k(z^k,v), \quad \forall v \in V,\ g^{k+1}\in V;
$$\\
$4.$ Set $z^k \gets g^{k+1}+ \frac{\|g^{k+1}\|^2}{\|g^k\|^2} z^k;$\\
$5.$ Update $\epsilon;$\\
$6.$ $k \gets k+1.$
}
\caption{The CGA for the linear problem}
\end{algorithm}



Concerning the convergence of Algorithm $1$, we have that
$$
\lim_{k \to \infty}\|u^k-u \|=0,
$$
where $u$ is the solution to problem \eqref{eq:LinearVariationalProblem}. In fact, it can be shown that
$$
\|u^k-u\|\leqslant C\|u^0-u\|\left[\frac{\sqrt{c_A}-1}{\sqrt{c_A}+1}\right]^k$$
(see \cite{daniel1971approximate}), where $c_A$ is the condition number of the operator $A$ defined by  $\left(Au, v\right)_V = a(u,v)$ for all $u,v\in V$, that is, 
$$
c_A=\|A\|_{\mathcal{L}(V)}\cdot\|A^{-1}\|_{\mathcal{L}(V)}.
$$ 
We can use the steps Algorithm $1$ to produce Algorithm $2$ (see below) which allows to compute Pareto optimal strategies in the linear case. As for Algorithm $1,$ it can be proved that
$$
\|v^k - v\| \leqslant C\|v^0-v\|\lambda^k,
$$
for a suitable $0 < \lambda < 1.$
\begin{algorithm}
\SetAlgoLined
\KwResult{The Pareto optimal control $v.$}
Initialize by inputting the tolerance $\epsilon_0,$ the error variable $\epsilon,$ $k=0,$ an initial guess $v^0 \in V$ for the Pareto equilibrium, the solutions $u^0,\, \varphi^0_1$ and $\varphi^0_2$ to the problems
\begin{equation*}
\begin{cases}
  {u}^0_t -\Delta {u}^0= v^0\mathds{1}_\omega,  & \text{in}\ Q, \\
  {u}^0 = 0, &\text{on} \ \Sigma,\\
  {u}^0(0) =u_0 ,	&\text{in} \ \Omega 
\end{cases}
\end{equation*}
and
\begin{equation*}
\begin{cases}
 -{\varphi}^0_{i,t} -\Delta{{\varphi}^0_i}= 0, & \text{in}\ Q, \\
  {\varphi}^0_i = 0, &\text{on} \ \Sigma,\\
  {\varphi}^0_i(T) = ({u}^0(T)-u_i(T))\mathds{1}_{\mathcal{O}_i}, 	&\text{in} \ \Omega 
\end{cases}
\end{equation*}
and set $g^0 :=\alpha\varphi^0_1\mathds{1}_\omega+(1-\alpha)\varphi^0_2\mathds{1}_\omega+\mu v^0$ and $z^0:=g^0.$\\
\While{$\epsilon > \epsilon_0,$}{
$1.$ Solve, via Algorithm $1,$ the problems:
\begin{equation*}
\begin{cases}
  w^k_t -\Delta w^k= z^k\mathds{1}_\omega , & \text{in}\ \Omega\times(0,T), \\
  w^k = 0, &\text{on} \ \partial\Omega\times(0,T),\\
  w^k(0) =0, 	&\text{in} \ \Omega ;
\end{cases}
\end{equation*}
\begin{equation*}
\begin{cases}
 -\psi^k_{i,t} -\Delta{\psi^k_i}= 0, & \text{in}\ Q, \\
  \psi^k_i = 0, &\text{on} \ \Sigma,\\
  \psi^k_i(T) = w^k(T)\mathds{1}_{\mathcal{O}_i}, 	&\text{in} \ \Omega,\,\text{ for } i \in \left\{1,2\right\}; 
\end{cases}
\end{equation*}\\
$2.$ Set $\bar{g}^k=\alpha\psi^k_1\mathds{1}_\omega+(1-\alpha)\psi^k_2\mathds{1}_\omega+\mu z^k;$\\
$3.$ Put $\rho^k= \|g^k\|_V^2 / \iint_{\omega\times (0,T)} \left( \alpha\psi^k_1z^k+(1-\alpha)\psi^k_2z^k+\mu (z^k)^2 \right) \, dx\,dt;$\\
$4.$ Update $v:= v^{k+1} \gets v^k-\rho^kz^k;$\\
$5.$ Update $g^{k+1}\gets g^k-\rho^k\bar{g}^k;$\\
$6.$ Update $z^{k+1}\gets g^{k+1}+\frac{\|g^{k+1}\|^2}{\|g^k\|^2}z^k;$\\
$7.$ Update $\epsilon;$\\
$8.$ Update $k\gets k+1.$
}
\caption{The CGA to compute a Pareto equilibrium in the linear model}
\end{algorithm}

\subsection{Semilinear problem} \label{subsec:Semilinear_Algos}
In this section, the state equation is (\ref{pareto-15}) and the cost functions are again given by (\ref{def:ObjFuncs}). Recall that the optimality system are in this case given by (\ref{pareto-15}), (\ref{eq:Adjoint_Semilinear}) and (\ref{paretoeq_linear_mupos}) for $\alpha \in [0,1]$.
\subsubsection{A Fixed Point Algorithm} \label{subsubsec:Semilinear_FixedPoint}




\begin{lemma} \label{lem:ConvSemilinearAlgo}
Assume that $v, v^\prime$ $\in L^2(\omega\times (0,T)),$ $\|v\|_{L^2(\omega\times (0,T))} \leqslant R$ and $\|v^\prime\|_{L^2(\omega\times (0,T))}\leqslant R.$ Let us denote by $u$ and $u^{\prime}$ the solutions to \eqref{pareto-15} corresponding to $v$ and $v^\prime.$ Also, denote by $\varphi$ and $\varphi^{\prime}$ as the corresponding solutions to the adjoint systems \eqref{eq:Adjoint_Semilinear}. Then, there exists a constant $C = C(\Om,\, T,\, F,\, R)$ such that
\begin{align*}
    \|u-u^{\prime}&\|_{L^\infty(0,T;L^2(\Omega))}+\|u-u^{\prime}\|_{L^2(0,T;H^1_0(\Omega))} \\
    &+\|\varphi-\varphi^{\prime}\|_{L^\infty(0,T;L^2(\Omega))}+\|\varphi-\varphi^{\prime}\|_{L^2(0,T;H^1_0(\Omega))} \\
    &\leqslant C\|v-v^\prime\|_{L^2(\omega\times (0,T))}
\end{align*}
\end{lemma}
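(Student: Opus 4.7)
The plan is a standard two-step energy argument: first control the state difference $U := u - u'$, then the adjoint difference $\Phi := \varphi - \varphi'$, using the fact that the former is driven directly by $v-v'$ while the latter inherits dependence on $v-v'$ through both its terminal datum $U(T)\mathds{1}_{\mathcal{O}_i}$ and a source term encoding the discrepancy of the linearization points.

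First I would subtract the state equations for $u$ and $u'$ to obtain
\begin{equation*}
    U_t - \Delta U + a(x,t)\, U = (v-v')\mathds{1}_\omega, \quad U|_\Sigma = 0, \quad U(0)=0,
\end{equation*}
where $a(x,t) := \int_0^1 F'\bigl(\theta u + (1-\theta)u'\bigr)\, d\theta$ satisfies $\|a\|_{L^\infty(Q)} \leqslant \|F'\|_{L^\infty}$. Testing with $U$, using Cauchy--Schwarz and Young on the right-hand side, and then Gronwall's lemma, yields
\begin{equation*}
    \|U\|_{L^\infty(0,T;L^2(\Om))} + \|U\|_{L^2(0,T;H^1_0(\Om))} \leqslant C\|v-v'\|_{L^2(\om\times (0,T))},
\end{equation*}
with $C = C(\Om, T, F)$. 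In particular, $\|U(T)\|^2 \leqslant C\|v-v'\|^2_{L^2(\om\times(0,T))}$.

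Next I would subtract the adjoint systems \eqref{eq:Adjoint_Semilinear} to get
\begin{equation*}
    -\Phi_t - \Delta \Phi + F'(u)\,\Phi = \bigl(F'(u') - F'(u)\bigr)\varphi', \quad \Phi|_\Sigma = 0, \quad \Phi(T) = U(T)\mathds{1}_{\mathcal{O}_i}.
\end{equation*}
Using the implicit $C^2$ hypothesis on $F$ (as in Theorem \ref{th.3}), one has $|F'(u) - F'(u')| \leqslant \|F''\|_{L^\infty}|U|$. A standard energy bound for the adjoint state $\varphi'$ itself (bounded potential $F'(u')$, terminal datum in $L^2$) gives $\|\varphi'\|_{L^\infty(0,T;L^2(\Om))} + \|\varphi'\|_{L^2(0,T;H^1_0(\Om))} \leqslant C(R)$, using that $\|u'(T)\| \leqslant C(\|u_0\| + R)$ by Lemma \ref{lemma3.1}. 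Now I would test the $\Phi$-equation with $\Phi$ and integrate in time from $t$ to $T$:
\begin{equation*}
    \tfrac12\|\Phi(t)\|^2 + \int_t^T\|\nabla\Phi\|^2\,ds \leqslant \tfrac12\|U(T)\|^2 + \|F'\|_\infty\int_t^T\|\Phi\|^2\,ds + \|F''\|_\infty\iint_{t}^{T}\!\!\int_\Omega |U||\varphi'||\Phi|\,dx\,ds.
\end{equation*}
The trilinear term is bounded via H\"older by $\|U\|_{L^4(\Om)}\|\varphi'\|_{L^4(\Om)}\|\Phi\|_{L^2(\Om)}$; invoking the embedding $H^1_0(\Om) \hookrightarrow L^4(\Om)$ (valid for $N\leqslant 3$) and Young's inequality, I can split it into $\varepsilon\|\nabla U\|^2 + \varepsilon\|\nabla\Phi\|^2 + C_\varepsilon\|\varphi'\|^2_{L^4}\|\Phi\|^2$ contributions. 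The $\nabla\Phi$ piece gets absorbed into the left-hand side; the $\nabla U$ piece is integrable in $s$ by step one; and backward Gronwall in $t$ then closes the bound on $\Phi$, yielding the desired adjoint estimate.

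The main obstacle is the trilinear source term $(F'(u') - F'(u))\varphi'\Phi$: closing the estimate requires both a quantitative Lipschitz bound on $F'$, which forces the $C^2$ hypothesis with bounded $F''$, and a sufficiently strong a priori bound on $\varphi'$, which is the route through which the constant $C$ picks up its dependence on $R$ (via $\|u'(T)\|\lesssim \|u_0\| + R$). Everything else is a textbook parabolic energy estimate.
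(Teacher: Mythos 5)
Your argument follows the same route as the paper's proof: subtract the state equations, test with $u-u'$, use the Lipschitz bound on $F$ and Gronwall; then subtract the adjoint equations, split $F'(u)\varphi-F'(u')\varphi'$ into $F'(u)(\varphi-\varphi')+(F'(u)-F'(u'))\varphi'$, and close with a backward Gronwall argument. If anything, your treatment of the trilinear term via H\"older, the embedding $H^1_0(\Omega)\hookrightarrow L^4(\Omega)$ and absorption is more explicit than the paper's, which simply asserts the bound $\int_\Omega |\varphi'|\,|F'(u)-F'(u')|\,|\varphi-\varphi'|\,dx\leqslant C(\|u-u'\|^2+\|\varphi-\varphi'\|^2)$ without detailing the required control on $\varphi'$.
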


\begin{proof}
We observe that $u-u^{\prime}$ satisfies 
\begin{equation} \label{eq:uvMinusUvprimePDE}
  \begin{cases}
    (u-u^{\prime})_t -\Delta(u-u^{\prime})+ F(u)-F(u^{\prime})= (v-v^\prime) \mathds{1}_{\omega}, \quad &\mbox{in} \ Q, \\
    u-u^{\prime}= 0, \quad &\mbox{on} \ \Sigma,\\
    u-u^{\prime}(0) = 0,\quad &\mbox{in} \ \Omega,
  \end{cases}
\end{equation}
Multiplying the PDE \eqref{eq:uvMinusUvprimePDE} by $u - u^{\prime}$ and integrating over $\Om,$ we deduce 
\begin{align*}
\frac{1}{2}\frac{d}{dt} \|u-u^{\prime}\|^2+\|\nabla(u-u^{\prime})\|^2 &= -\int_{\Omega} (F(u)-F(u^{\prime}))(u-u^{\prime}) \, dx+ \int_{\Omega} (v-v^\prime)(u-u^{\prime}) \,dx \\
&\leqslant \int_{\Omega} |F(u)-F(u^{\prime})||u-u^{\prime}| \, dx + \int_{\Omega} |v-v^\prime||u-u^{\prime}| \,dx \\
&\leqslant C\|u-u^{\prime}\|^2 +C\|v-v^\prime\|^2. 
\end{align*}
Therefore,
$$
\frac{d}{dt} \|u-u^{\prime}\|^2+\|\nabla(u-u^{\prime})\|^2 \leqslant C\|u-u^{\prime}\|^2+\|v-v^\prime\|^2. 
$$
whence Gronwall's Lemma implies
$$
\|u-u^{\prime}\|_{L^\infty(0,T;L^2(\Omega))}^2+\|u-u^{\prime}\|_{L^2(0,T;H^1_0(\Omega))}^2\leqslant C\|v-v^\prime\|_{L^2\left( \om \times \left(0,T\right) \right)}^2.
$$
This establishes the estimate we asserted for $u - u^{\prime}.$ Now, we analyze difference of the adjoint variables $\varphi - \varphi^{\prime}.$ Proceeding as before, we see that
\begin{align*}
-\frac{1}{2}\frac{d}{dt} \|\varphi-\varphi^{\prime}\|^2+\|\nabla(\varphi-\varphi^{\prime})\|^2 &= -\int_{\Omega} (F^\prime(u)\varphi-F^\prime(u^{\prime})\varphi^{\prime})(\varphi-\varphi^{\prime}) \, dx \\
&\leqslant\int_{\Omega} |F^\prime(u)\varphi-F^\prime(u^{\prime})\varphi^{\prime}|\cdot|\varphi-\varphi^{\prime}| \, dx  \\
&\leqslant C\int_{\Omega}|F^\prime(u)\|\varphi-\varphi^{\prime}|^2\, dx \\
&\hspace{0.5cm}+ C\int_\Om |\varphi^{\prime}|\cdot|F^\prime(u)-F^\prime(u^{\prime})|\cdot|\varphi-\varphi^{\prime}| \, dx  \\
&\leqslant C (\|u-u^{\prime}\|^2+\|\varphi-\varphi^{\prime}\|^2)  \\
&\leqslant C(\|\varphi-\varphi^{\prime}\|^2+\|v-v^\prime\|^2).
\end{align*}
Employing Gronwall's Lemma once more, we conclude that
$$
\|\varphi-\varphi^{\prime}\|_{L^\infty(0,T;L^2(\Omega))}^2+\|\varphi-\varphi^{\prime}\|_{L^2(0,T;H^1_0(\Omega))}^2\leqslant C\|v-v^\prime\|^2.
$$
\end{proof}

This lemma suggests the following iterates, that correspond to a fixed-point formulation of the optimality system:

\begin{algorithm}
\SetAlgoLined
\KwResult{The Pareto optimal control $v,$ and the corresponding state variable $u.$}
Initialize with the error variable $\epsilon,$ the tolerance $\epsilon_0,$ $k=0,$ an initial guess $v^0$ for the control, the corresponding solution $u^0$ of \eqref{pareto-15}, and the adjoint states $\varphi_1^0$ and $\varphi_2^0$ of \eqref{eq:Adjoint_Semilinear}.\\
\While{$\epsilon > \epsilon_0$}{
$1.$ Update $v^{k+1} \gets P_{\mathcal{U}}\left( -\frac{1}{\mu}\left[ \alpha\varphi_1^k + \left(1-\alpha\right)\varphi_2^k \right] \right);$ \\
$2.$ Let $u^{k+1}$ be the solution of \eqref{pareto-15} corresponding to $v=v^{k+1};$\\
$3.$ Let $\varphi_i^{k+1}$ solve \eqref{eq:Adjoint_Semilinear}, with $u = u^{k+1};$\\
$4.$ Update $\epsilon;$\\
$5.$ $k \gets k+1.$
}
\caption{Fixed point iterative algorithm for the semilinear model}
\end{algorithm}

\begin{theorem}
If $\mu$ is sufficiently large, then the states $u^k$ and the controls $v^k$ computed via Algorithm $3$ converge in the following sense:
$$
u^k \rightarrow u \text{ strongly in } L^\infty(0,T; L^2(\Om)) \cap L^2\left( 0,T; H^1_0(\Om) \right),
$$
and
$$
v^k \rightarrow v \text{ strongly in } L^2\left( \om \times \left(0,T\right) \right).
$$
If $\mathcal{U}$ has the property that $P_\mathcal{U}(f^k) \rightarrow P_\mathcal{U}(f)$ a.e. in $Q$ whenever $f^k \rightarrow f$ a.e. in $Q,$ then the convergences above hold a.e. in $Q.$ 
\end{theorem}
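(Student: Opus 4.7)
The strategy is to recognize Algorithm 3 as the Banach fixed-point iteration associated to the map
\begin{equation*}
T(v) := P_\mathcal{U}\!\left(-\frac{1}{\mu}\bigl[\alpha\,\varphi_1(v)+(1-\alpha)\,\varphi_2(v)\bigr]\Big|_{\omega\times(0,T)}\right),
\end{equation*}
where $\varphi_i(v)$ denotes the adjoint state from \eqref{eq:Adjoint_Semilinear} corresponding to the solution $u(v)$ of \eqref{pareto-15} driven by $v$. A fixed point of $T$ is precisely the projected optimality condition that characterizes a Pareto quasi-equilibrium in the present context. Therefore, it suffices to show that $T$ is a strict contraction on some invariant ball of $\mathcal{U}$ when $\mu$ is large.

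The first step is to secure a uniform $L^2$ bound on the iterates. Standard energy estimates for \eqref{pareto-15} together with the uniform bound on $F'$ give $\|u(v)(T)\|\leqslant C(\|u_0\|+\|v\|_{L^2(\omega\times(0,T))})$, and an analogous estimate for the backward adjoint equation yields $\|\varphi_i(v)\|_{L^2(Q)}\leqslant C_0(1+\|v\|_{L^2(\omega\times(0,T))})$, with $C_0$ depending only on $\Omega$, $T$, $F$, $u_0$, $u_{0,1}$, $u_{0,2}$. Hence $\|T(v)\|_{L^2(\omega\times(0,T))}\leqslant (C_0/\mu)(1+\|v\|)$, and for $\mu$ exceeding a suitable multiple of $C_0$ one produces $R>0$ with $\|v^0\|\leqslant R$ such that $T$ maps $B_R\cap\mathcal{U}$ into itself; consequently the entire iteration $\{v^k\}$ stays in $B_R$.

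The core step is contractivity. Since $P_\mathcal{U}$ is non-expansive, Lemma \ref{lem:ConvSemilinearAlgo} applied on $B_R$ yields
\begin{equation*}
\|T(v)-T(v')\|_{L^2(\omega\times(0,T))} \leqslant \frac{C(\Omega,T,F,R)}{\mu}\,\|v-v'\|_{L^2(\omega\times(0,T))}
\end{equation*}
for all $v,v'\in B_R\cap\mathcal{U}$. Enlarging $\mu$ so that $\mu>C(\Omega,T,F,R)$ makes the constant $\rho:=C(\Omega,T,F,R)/\mu<1$, so Banach's theorem delivers a unique fixed point $v\in B_R\cap\mathcal{U}$ with $\|v^k-v\|_{L^2(\omega\times(0,T))}\leqslant C\rho^k$. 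A second invocation of Lemma \ref{lem:ConvSemilinearAlgo} transports this convergence to $u^k\to u$ and $\varphi_i^k\to\varphi_i$ in $L^\infty(0,T;L^2(\Omega))\cap L^2(0,T;H^1_0(\Omega))$, with the same geometric rate.

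For the pointwise conclusion, the geometric decay combined with Chebyshev's inequality gives $|\{|v^k-v|>\rho^{k/2}\}|\leqslant C\rho^k$, which is summable; Borel--Cantelli then forces $v^k\to v$ a.e.\ in $\omega\times(0,T)$, and the analogous estimates for $u^k$ and $\varphi_i^k$ give a.e.\ convergence in $Q$. Alternatively, under the continuity hypothesis on $P_\mathcal{U}$, one extracts a subsequence along which $\varphi_i^{k_j}\to\varphi_i$ a.e., chains through $P_\mathcal{U}$ to obtain $v^{k_j+1}\to v$ a.e., and extends to the full sequence by uniqueness of the $L^2$-limit. The principal obstacle is the uniform boundedness step: the constant in Lemma \ref{lem:ConvSemilinearAlgo} genuinely depends on $R$, so establishing an invariant ball before deploying the contraction is essential, and this is where the hypothesis that $\mu$ be sufficiently large (in a way that depends on the data) is consumed.
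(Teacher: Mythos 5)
Your proof is correct and follows essentially the same route as the paper: both turn Algorithm~3 into a contraction via Lemma \ref{lem:ConvSemilinearAlgo} and the non-expansiveness of $P_\mathcal{U}$, obtain geometric convergence of $v^k$ (hence of $u^k$ and $\varphi_i^k$), and deduce the a.e.\ convergence from the summability of the increments. You are in fact more careful than the paper on one point: you first establish an invariant ball $B_R$ for the iteration before invoking the $R$-dependent Lipschitz constant of Lemma \ref{lem:ConvSemilinearAlgo}, a step the paper's proof leaves implicit.
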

\begin{proof}
From Lemma \ref{lem:ConvSemilinearAlgo}, we have
\begin{align*}
    \|\varphi^{k+1} - \varphi^k\|_{L^\infty(0,T; L^2(\Om))} + \|\varphi^{k+1} - \varphi^k\|_{L^\infty(0,T; H^1_0(\Om))} &\leqslant \frac{C}{\mu}\|\varphi^k - \varphi^{k-1}\|_{L^2\left(\om \times \left(0,T\right) \right)} \\
    &\leqslant \left(\frac{C}{\mu}\right)^k\|\varphi^1 - \varphi^0\|_{L^2\left(\om \times \left(0,T\right) \right)},
\end{align*}
whence $v^k = P_\mathcal{U}\left( - \varphi^k / \mu \right) \rightarrow v$ in $L^2(\om\times (0,T))$, where $v:=P_\mathcal{U}\left( - \varphi / \mu \right)$. Since
$$
\sum_{k=1}^\infty \left( \|\varphi^{k+1} - \varphi^k\|_{L^\infty(0,T; L^2(\Om))} + \|\varphi^{k+1} - \varphi^k\|_{L^\infty(0,T; H^1_0(\Om))} \right) < +\infty,
$$
the almost everywhere convergence $v^k \rightarrow v$ also holds. From this, the convergence $u^k$ towards $u$ follows (in the appropriate topology of the statement).
\end{proof}

\subsubsection{A Newton-Raphson Algorithm}

We can also prove the convergence of a Newton-Raphson algorithm, for arbitrary $\mu>0,$ as long as the initial data of the controlled state is close enough to those of the trajectories. We introduce the mapping $\Phi : Y \mapsto Z$, with
$$
\Phi(u,\varphi) := \left( u_t -\Delta u +F(u) -\frac{1}{\mu}\varphi, -\varphi_{t} - \Delta \varphi + F^\prime(u)\varphi,  u\left(\cdot, 0\right), \varphi\left(\cdot,T\right) - u\left(\cdot,T\right)\right),
$$
where $Y=Y_0 \times Y_0$,
$$
Y_0 := \left\{ f\in L^\infty\left( 0,T; H^1_0\left( \Om \right) \right)\cap L^2\left( 0,T; H^2\left( \Om \right) \right)  : f_t \in L^2(Q) \right\}
$$
and 
$$
Z:= \left[ L^2\left(Q\right) \right]^2 \times \left[ H^1_0\left(\Om\right) \right]^2.
$$
We assume that the spaces $Y$ and $Z$ are endowed with the natural topologies (becoming consequently Banach spaces). We remark that $\Phi$ is well-defined. Furthermore, this mapping has the following properties:
\begin{lemma}
The mapping $\Phi : Y \mapsto Z$ is of class $C^1.$ Moreover, for any $u^*\in Y_0$, the linear mapping $\Phi^\prime(u^*,0) : Y \mapsto Z$ is bijective.
\end{lemma}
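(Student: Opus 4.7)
The plan is to treat the two assertions separately.

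For the $C^1$ claim, I would decompose $\Phi$ as $\Phi = \Phi_L + \Phi_N$, where $\Phi_L$ collects the linear contributions (i.e., $u_t - \Delta u$, $-\tfrac{1}{\mu}\varphi$, $-\varphi_t - \Delta \varphi$, $u(\cdot, 0)$ and $\varphi(\cdot, T) - u(\cdot, T)$), while $\Phi_N(u,\varphi) := (F(u), F'(u)\varphi, 0, 0)$. By the very definition of $Y_0$ together with standard parabolic trace results (in particular the embedding $Y_0 \hookrightarrow C([0,T]; H^1_0(\Omega))$), $\Phi_L: Y \to Z$ is continuous and linear, hence $C^\infty$. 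For the nonlinear part, the hypothesis that $F \in C^2$ with globally bounded $F'$ and $F''$ ensures that the Nemytskii maps $u \mapsto F(u)$ and $(u,\varphi)\mapsto F'(u)\varphi$ are continuously differentiable from $L^2(Q)$ (respectively, from $L^2(Q)^2$) into $L^2(Q)$, with derivatives $h \mapsto F'(u)h$ and $(h, k) \mapsto F''(u)\varphi\, h + F'(u)k$. Composing with the continuous embedding $Y \hookrightarrow L^2(Q)^2$ yields $\Phi_N \in C^1(Y; Z)$; evaluating at $(u^*, 0)$ kills the $F''(u^*)\varphi^* h$ contribution and produces the announced expression for the linearization.

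For the bijectivity of $\Phi'(u^*, 0)$, given $(f_1, f_2, g_1, g_2) \in Z$, the task is to find a unique $(h, k) \in Y$ with
\[
h_t - \Delta h + F'(u^*) h - \tfrac{1}{\mu}k = f_1,\quad -k_t - \Delta k + F'(u^*) k = f_2,
\]
\[
h(\cdot, 0) = g_1,\quad k(\cdot, T) - h(\cdot, T) = g_2.
\]
I would decouple this forward-backward system via a shooting argument parametrized by $\eta := h(\cdot, T) \in L^2(\Omega)$. For each $\eta$, the backward problem with terminal condition $k(\cdot, T) = \eta + g_2$ admits a unique $k^\eta \in Y_0$ (using the boundedness of $F'(u^*)$); in turn, the forward problem with initial datum $g_1$ and source $f_1 + k^\eta/\mu$ provides a unique $h^\eta \in Y_0$. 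The compatibility requirement $h^\eta(\cdot, T) = \eta$ reduces, after splitting off the data-dependent affine part, to an equation of the form $(I - \mathcal{A})\eta = b$ on $L^2(\Omega)$. Thanks to parabolic smoothing, $\mathcal{A}$ actually maps $L^2(\Omega)$ into $H^1_0(\Omega)$, so it is compact in $L^2(\Omega)$; by the Fredholm alternative, bijectivity of $I - \mathcal{A}$ reduces to its injectivity.

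For the injectivity step, assume $(I - \mathcal{A})\eta = 0$, so that $(h, k) \in Y$ solves the homogeneous system with $h(\cdot, T) = k(\cdot, T) = \eta$ and $h(\cdot, 0) = 0$. Multiplying the first equation by $k$, integrating over $Q$ and using the second equation together with the boundary conditions yields the key identity
\[
\|\eta\|^2 = \tfrac{1}{\mu}\|k\|_{L^2(Q)}^2,
\]
which, combined with the standard energy estimate $\|k\|_{L^2(Q)}^2 \leq C(T, \|F'\|_\infty)\|\eta\|^2$ for the backward equation, forces $\eta = 0$ (hence $h = k = 0$) as soon as $\mu$ is sufficiently large relative to $T$ and $\|F'\|_\infty$. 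I expect the main obstacle to be precisely this injectivity step: the naive energy identity closes only in a large-$\mu$ regime, so a sharper argument, likely exploiting unique continuation for the parabolic operator or a finer spectral analysis of the compact map $\mathcal{A}$, is needed to reach bijectivity for every $\mu > 0$ as the statement suggests.
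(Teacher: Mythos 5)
Your decomposition $\Phi=\Phi_L+\Phi_N$ is natural, but the key step for $\Phi_N$ is flawed: a Nemytskii operator $u\mapsto F(u)$ is Fr\'echet differentiable from $L^2(Q)$ into $L^2(Q)$ only when $F$ is affine, so for a general $C^2$ nonlinearity with bounded derivatives (e.g. the $F(s)=s(1+\sin s)$ used later in the paper) the claim that $u\mapsto F(u)$ is $C^1$ at the $L^2\to L^2$ level is false; worse, the candidate derivative of $(u,\varphi)\mapsto F'(u)\varphi$ contains the term $F''(u)\varphi h$, a product of two $L^2$ functions, which need not lie in $L^2(Q)$ at all. The differentiability has to be obtained as a map from the strong space $Y$ into $L^2(Q)$, exploiting the two-norm gap, and this is precisely what the paper does: it proves the local Lipschitz bound $\|\langle\Phi'(u,\varphi)-\Phi'(\bar u,\bar\varphi),(y,\psi)\rangle\|_Z\le C(1+\|(\bar u,\bar\varphi)\|_Y)\|(u-\bar u,\varphi-\bar\varphi)\|_Y\|(y,\psi)\|_Y$ by estimating $[F'(u)-F'(\bar u)]y$ through $\|u-\bar u\|_{L^4(Q)}\|y\|_{L^4(Q)}$ and the product term through $\sup_\Omega|y|\cdot\|u-\bar u\|_{L^4(\Omega)}\|\bar\varphi\|_{L^4(\Omega)}\le C\|\Delta y\|\,\|\nabla(u-\bar u)\|\,\|\nabla\bar\varphi\|$, all of which are controlled by the $Y$-norms when $N\le 3$. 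Your argument can be repaired by routing the embeddings of $Y$ into $L^4(Q)$ and $L^2(0,T;L^\infty(\Omega))$ rather than into $L^2(Q)$, but as written the central differentiability claim does not hold.

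For bijectivity the paper only asserts that the linear forward--backward system ``can be easily established by means of standard arguments,'' so your shooting-plus-Fredholm reduction supplies a proof the paper omits, and your identity $\|\eta\|^2=\frac{1}{\mu}\|k\|^2_{L^2(Q)}$ is correct for $\Phi$ exactly as printed. The obstacle you flag is genuine for that sign: taking $F\equiv 0$ and expanding in eigenfunctions of $-\Delta$, the compatibility map $\mathcal{A}$ acts diagonally with eigenvalues $\frac{1}{\mu}\cdot\frac{1-e^{-2\lambda_j T}}{2\lambda_j}$, so $I-\mathcal{A}$ loses injectivity whenever $\mu=\frac{1-e^{-2\lambda_j T}}{2\lambda_j}$ for some $j$; neither unique continuation nor a finer spectral analysis can rescue the statement for those values of $\mu$. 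The real resolution is that the first component of $\Phi$ should carry $+\frac{1}{\mu}\varphi$ in order to be consistent with the optimality condition $v=-\frac{1}{\mu}\varphi$ that the Newton iteration is designed to solve; with that sign your duality computation yields $\|\eta\|^2=-\frac{1}{\mu}\|k\|^2_{L^2(Q)}\le 0$, hence $\eta=0$, and injectivity (therefore bijectivity, by your Fredholm step) holds for every $\mu>0$ without any largeness assumption.
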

\begin{proof}
It is straightforward to show that the G\^ateux derivative of $\Phi$ at any $(u,\varphi)\in Y$ in the direction $(y,\psi)$ is given by
$$
\left\langle \Phi^\prime(u,\varphi),\, (y,\psi)\right\rangle = \left(y_t - \Delta y + F^\prime(u)y - \frac{\psi}{\mu},\, -\psi_t -\Delta \psi + F^\prime(u)\psi + F^{\prime\prime}(u)y\varphi,\,y(0),\, \psi(T) - y(T) \right).
$$
We will show that, for all $(u,\varphi),\ (\bar{u},\bar{\varphi}),\ (y,\psi)\in Y$, one has
\begin{equation} \label{eq:NewtonRaphson_ProofOfC1}
    \| \left\langle \Phi^\prime(u,\varphi) - \Phi^\prime(\overline{u},\overline{\varphi}) , (y,\psi)\right\rangle \|_Z \leqslant C(1+ \|( \overline{u},\, \overline{\varphi})\|_Y) \|(u-\overline{u},\, \varphi - \overline{\varphi})\|_Y \|(y,\psi)\|_Y, 
\end{equation}
 whence we will get in particular that $\Phi$ is of class $C^1$. We have:
\begin{equation} \label{eq:NR_Lemma_step1}
    \left\langle \Phi^\prime(u,\varphi) - \Phi^\prime(\overline{u},\overline{\varphi}) , (y,\psi)\right\rangle = (\phi_1,\phi_2,0,0),
\end{equation}
where
$$
\begin{cases}
\phi_1 = \left[ F^\prime(u) - F^\prime(\overline{u}) \right]y,\\
\phi_2 = \left[ F^\prime(u) - F^\prime(\overline{u}) \right]\psi + \left[F^{\prime\prime}(u)\varphi - F^{\prime\prime}(\overline{u})\overline{\varphi} \right]y.
\end{cases}
$$
It is easy to see that
\begin{align} \label{eq:NR_Lemma_step2}
  \begin{split}
    \|\phi_1\|_{L^2(Q)}^2 &\leqslant C\|u-\overline{u}\|_{L^4(Q)}\|y\|_{L^4(Q)} \\
    &\leqslant C\|u-\overline{u}\|_{L^\infty(0,T;H^1_0(\Om))}\|y\|_{L^\infty(0,T;H^1_0(\Om))} \\
    &\leqslant C\|( u-\overline{u},\,\varphi - \overline{\varphi})\|_{Y}\|( y,\,\psi)\|_{Y}.
  \end{split}
\end{align}
Similarly,
$$
\|\left[ F^\prime(u) - F^\prime(\overline{u}) \right]\psi\|_{L^2(Q)} \leqslant C \|(u-\overline{u},\varphi-\overline{\varphi})\|_Y \| (y,\psi) \|_Y
$$
and, writing that
$$
\left[F^{\prime\prime}(u)\varphi - F^{\prime\prime}(\overline{u})\overline{\varphi} \right]y = F^{\prime\prime}(u)(\varphi - \overline{\varphi})y +\left[F^{\prime\prime}(u)- F^{\prime\prime}(\overline{u})\right]\overline{\varphi} y =: \phi_2^1 + \phi_2^2,
$$
we likewise estimate
\begin{equation} \label{eq:NR_Lemma_step3}
    \|\phi_2^1\|_{L^2(Q)} \leqslant C \|(u-\overline{u},\varphi-\overline{\varphi})\|_Y \| (y,\psi) \|_Y.
\end{equation}
We also have
\begin{align*}
    \int_\Om |\phi_2^2|^2\,dx &\leqslant C \int_\Om |u-\overline{u}|^2|\overline{\varphi}|^2|y|^2\,dx \\
    &\leqslant C \sup_\Om |y|^2 \left( \int_\Om |u-\overline{u}|^4\,dx \right)^{1/2} \left( \int_\Om \left|\overline{\varphi} \right|^4 \,dx\right)^{1/2} \\
    &\leqslant C \|\Delta y\|^2 \|\nabla u-\nabla \overline{u}\|^2 \|\nabla \overline{\varphi}\|^2,
\end{align*}
whence
\begin{align} \label{eq:NR_Lemma_step4}
  \begin{split}
    \|\phi_2^2\|_{L^2(Q)} &\leqslant C \|\Delta y\|_{L^2(Q)} \| u- \overline{u}\|_{L^\infty(0,T;H^1_0(\Om))} \| \overline{\varphi}\|_{L^\infty(0,T;H^1_0(\Om))} \\
    &\leqslant C \|\left(\overline{u},\overline{\varphi}\right)\|_Y\|(u-\overline{u},\varphi-\overline{\varphi})\|_Y \| (y,\psi) \|_Y.
  \end{split}
\end{align}
Putting together \eqref{eq:NR_Lemma_step1}-\eqref{eq:NR_Lemma_step4}, we prove \eqref{eq:NewtonRaphson_ProofOfC1}.

Given $u^*_0 \in Y,$ and $(g,h,y_0,\psi_T) \in Z,$ to solve the equation $\Phi^\prime(u^*,0)\cdot(y,\psi) = (g,h,y_0,\psi_T)$ is equivalent to find $(y,\psi) \in Y$ such that
$$
\begin{cases}
y_t - \Delta y + F^\prime(u^*)y - \frac{\psi}{\mu} = g, &\text{ in } Q,\\
-\psi_t - \Delta \psi + F^\prime(u^*)\psi = h, &\text{ in } Q,\\
y = 0 \text{ and } \psi = 0, &\text{ on } \Sigma,\\
y(0) = y_0 \text{ and } \psi(T) = y(T) + \psi_T, &\text{ in } \Om.
\end{cases}
$$
But this can be easily established by means of standard arguments.
\end{proof}

\begin{algorithm}
\SetAlgoLined
\KwResult{The Pareto optimal control $v,$ and the corresponding state $u.$}
Initialize with the error variable $\epsilon,$ the tolerance $\epsilon_0,$ $k=0,$ an initial guess for the controlled state $u^0,$ and let $u^{\left(\alpha\right)}$ the uncontrolled trajectory corresponding to the initial data $\alpha u_{01} + (1-\alpha)u_{02}.$\\
\While{$\epsilon > \epsilon_0$}{
$1.$ Let $(y^k,\psi^k)$ solve
$$
\Phi^\prime(u^{(\alpha)},0)\cdot (y^k,\psi^k) = \Phi\left(u^k,\varphi^k\right) -  \left( 0,0, u_0, -u_{(\alpha)}^T\right),
$$
where $u_{(\alpha)}^T := \alpha u_1\left(T,\,\cdot\right) + \left(1-\alpha\right) u_2\left(T,\,\cdot\right);$\\
$2.$ Update $\left(u^{k+1},\varphi^{k+1} \right) \gets \left( u^k,\varphi^k\right) - (y^k,\psi^k);$\\
$3.$ Set $v := -\varphi^{k+1}/\mu;$\\
$4.$ Update $\epsilon;$\\
$5.$ $k\gets k+1;$
}
\caption{Newton-Raphson iterative algorithm for the semilinear model}
\end{algorithm}

\begin{theorem}
Let us assume $u_0,\,u_{01},\,u_{02} \in H^1_0(\Om).$ There exists $\delta> 0$ such that, if the initial data satisfy $\|u_0 - u_{01}\|_{H^1_0(\Om)} + \|u_0 - u_{02}\|_{H^1_0(\Om)} \leqslant \delta,$ then Algorithm 4 converges, i.e., $u^k \rightarrow u,$ strongly in $Y,$ and $v^k \rightarrow v,$ strongly in $L^2\left( \om \times \left(0,T\right) \right).$
\end{theorem}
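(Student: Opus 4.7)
The plan is to recast Algorithm $4$ as a Banach fixed-point iteration anchored at the base state $X^{\ast} := (u^{(\alpha)}, 0) \in Y$, which is the exact solution of the optimality system in the degenerate case $u_{0} = u_{01} = u_{02}$. Writing $X := (u, \varphi)$, $b := (0, 0, u_{0}, -u_{(\alpha)}^{T})$ and $A := \Phi'(X^{\ast}) \in \mathcal{L}(Y, Z)$ (bijective by the previous lemma, so $A^{-1} \in \mathcal{L}(Z, Y)$ by the open mapping theorem), the algorithm amounts to iterating $X^{k+1} = G(X^{k})$ with
$$
G(X) := X - A^{-1}\bigl[\Phi(X) - b\bigr].
$$
Any fixed point $X$ of $G$ satisfies $\Phi(X) = b$, i.e., solves the optimality system, and recovers the Pareto control as $v = -\varphi/\mu$.

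First I quantify that $X^{\ast}$ is an approximate solution of order $\delta$. A direct computation gives
$$
\Phi(X^{\ast}) - b = \bigl(0,\ 0,\ \alpha u_{01} + (1-\alpha)u_{02} - u_{0},\ u_{(\alpha)}^{T} - u^{(\alpha)}(T)\bigr).
$$
The third entry has $H^{1}_{0}$-norm at most $\delta$ by hypothesis. For the fourth, each $u_{i} - u^{(\alpha)}$ solves a linear parabolic equation with a bounded mean-value potential from $F$ and initial data $(1-\alpha)(u_{01} - u_{02})$ or $\alpha(u_{02} - u_{01})$; a standard $H^{1}_{0}$ energy estimate then yields $\|u_{(\alpha)}^{T} - u^{(\alpha)}(T)\|_{H^{1}_{0}} \leq C_{1}\delta$ for a constant $C_{1}$ depending only on $F$, $\Omega$ and $T$.

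Next I show that $G$ contracts on a small ball $\overline{B_{r}(X^{\ast})} \subset Y$. Since $G'(X) = I - A^{-1}\Phi'(X)$ and in particular $G'(X^{\ast}) = 0$, the local Lipschitz bound \eqref{eq:NewtonRaphson_ProofOfC1} on $\Phi'$ established in the preceding lemma supplies
$$
\|G'(X)\|_{\mathcal{L}(Y, Y)} \leq \|A^{-1}\|_{\mathcal{L}(Z, Y)}\,\|\Phi'(X) - \Phi'(X^{\ast})\|_{\mathcal{L}(Y, Z)} \leq C_{2}\,\|X - X^{\ast}\|_{Y}
$$
on any fixed bounded neighborhood of $X^{\ast}$. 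Fixing $r > 0$ with $C_{2}r \leq 1/2$, the mean-value inequality then gives $\|G(X_{1}) - G(X_{2})\|_{Y} \leq \tfrac{1}{2}\|X_{1} - X_{2}\|_{Y}$ for all $X_{1}, X_{2} \in \overline{B_{r}(X^{\ast})}$. Finally I impose $\delta$ so small that $\|A^{-1}\|_{\mathcal{L}(Z, Y)}(C_{1}+1)\delta \leq r/2$; then $\|G(X^{\ast}) - X^{\ast}\|_{Y} \leq r/2$, so $G$ maps $\overline{B_{r}(X^{\ast})}$ into itself. Banach's fixed point theorem furnishes a unique fixed point $X = (u, \varphi) \in \overline{B_{r}(X^{\ast})}$, reached with geometric rate from $X^{0} = X^{\ast}$ (or from any initial guess in that ball). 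Convergence in $Y$ implies convergence in every coarser norm, hence in particular $v^{k} = -\varphi^{k}/\mu \to -\varphi/\mu = v$ strongly in $L^{2}(\omega \times (0, T))$.

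The main delicate point is to guarantee that $\|A^{-1}\|_{\mathcal{L}(Z, Y)}$ stays bounded uniformly in the regime $\delta \to 0$; this holds because $A$ depends only on $u^{(\alpha)}$ and $\mu$, with $u^{(\alpha)}$ controlled by $\|u_{01}\| + \|u_{02}\|$, so the invertibility constant does not degenerate. Everything else is a routine combination of Gronwall estimates and the $C^{1}$-with-locally-Lipschitz-derivative structure of $\Phi$ already proven.
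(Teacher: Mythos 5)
Your proposal is correct and follows essentially the same route as the paper: both anchor the Newton iteration at $(u^{(\alpha)},0)$, use the bijectivity of $\Phi^\prime(u^{(\alpha)},0)$ together with the local Lipschitz bound \eqref{eq:NewtonRaphson_ProofOfC1} to obtain a contraction on a small ball, and use the $O(\delta)$ size of the data discrepancy $\left(0,0,\alpha u_{01}+(1-\alpha)u_{02}-u_0,\ u_{(\alpha)}^T-u^{(\alpha)}(T)\right)$ to keep the iterates in that ball. The only cosmetic difference is that the paper runs the induction and the Cauchy-sequence estimate by hand, whereas you package the same inequalities as Banach's fixed point theorem for the map $G(X)=X-A^{-1}[\Phi(X)-b]$.
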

\begin{proof}
Let us denote by $u^{(\alpha)}$ the uncontrolled state corresponding to the initial data $\alpha u_{01} + \left( 1- \alpha \right)u_{02}.$ We introduce $C_0 := \left\|\Phi^\prime\left( u^{(\alpha)},0\right)^{-1}\right\|_{\mathcal{L}(Y;Z)},$ and we take $\delta > 0$ such that
$$
\left\| \Phi^\prime\left( u,\varphi \right) - \Phi^\prime\left( u^{\left(\alpha\right)},0\right) \right\|_{\mathcal{L}(Y;Z)} \leqslant \frac{1}{2C_0}\ \forall (u,\varphi)\in Y\ \text{such that}\ \|(u,\varphi)-(u^{(\alpha)},0)\|_Y \leqslant \delta.
$$

Then we observe that
\begin{align*}
    \left( u^{k+1}, \varphi^{k+1} \right) =& -\Phi^\prime\left( u^{(\alpha)},0\right)^{-1}\left[ \Phi\left(u^k,\varphi^k\right) - \Phi\left( u^{(\alpha)},0 \right) - \Phi^\prime\left(u^{\left(\alpha\right)},0\right)\left(u^k,\varphi^k\right) \right] \\
    &- \Phi^\prime\left( u^{(\alpha)}, 0\right)^{-1}\left[ \Phi\left( u^{(\alpha)},0\right) - \left( 0,0,u_0,-u^{\left(\alpha\right)}\left(\cdot,T\right) \right) \right] \\
    =& -\Phi^\prime\left( u^{(\alpha)},0\right)^{-1}\left[ \Phi\left(u^k,\varphi^k\right) - \Phi\left( u^{(\alpha)},0 \right) - \Phi^\prime\left(u^{\left(\alpha\right)},0\right)\left(u^k,\varphi^k\right) \right] \\
    &- \Phi^\prime\left( u^{(\alpha)}, 0\right)^{-1} \left(  0, 0, \alpha u_{01} + (1-\alpha)u_{02} - u_0, u^{\left(\alpha\right)}\left(\cdot,T\right) -\alpha u_1(\cdot, T) - (1-\alpha) u_2(\cdot,T)\right),
\end{align*}
whence
\begin{align*}
    ( u^{k+1}, \varphi^{k+1} )& - ( u^{( \alpha )},0 ) = -\Phi^\prime( u^{(\alpha)},0)^{-1}\{ \Phi(u^k,\varphi^k) - \Phi( u^{(\alpha)},0 ) - \Phi^\prime(u^{(\alpha)},0)[ (u^k,\varphi^k) - ( u^{(\alpha)}, 0) ] \} \\
    &- \Phi^\prime( u^{(\alpha)}, 0)^{-1} (  0, 0, \alpha u_{01} + (1-\alpha)u_{02} - u_0, u^{(\alpha)}(\cdot,T) -\alpha u_1(\cdot, T) - (1-\alpha) u_2(\cdot,T)).
\end{align*}
Therefore,
\begin{align} \label{eq:InductiveStep_NewtRaph_Semilinear}
  \begin{split}
    \|( u^{k+1}, \varphi^{k+1} ) - ( u^{( \alpha )},0 )\|_Y \leqslant&\, C_0 \sup_{(u,\varphi) \in [(u^k,\varphi^k),(u^{(\alpha)},0)]} \|\Phi^\prime(u,\varphi) - \Phi^\prime(u^{(\alpha)},0)\|_{\mathcal{L}(Y;Z)} \|( u^{k}, \varphi^{k} ) - ( u^{( \alpha )},0 )\|_Y \\
    &+ C\|u_0 - u_{01}\| + C\|u_0 - u_{02}\|,
  \end{split}
\end{align}
where $C=C\left( \Om,\,T,\, F,\, u_{01},\, u_{02}\right).$ If $\|(u^k,\varphi^k) -(u^{(\alpha)},0)\| \leqslant \delta,$ the inequality \eqref{eq:InductiveStep_NewtRaph_Semilinear} implies
$$
\left\|\left( u^{k+1}, \varphi^{k+1} \right) - \left( u^{\left( \alpha \right)},0 \right)\right\|_Y \leqslant \frac{\delta}{2} + C\|u_0 - u_{01}\|_V + C\|u_0 - u_{02}\|_V \leqslant \delta,
$$
as long as $C\|u_0 - u_{01}\| + C\|u_0 - u_{02}\| \leqslant \delta/2.$ This way, we deduce by induction that the sequence $\left\{ (u^k,\varphi^k) \right\}$ is in fact confined to the $W-$ball centered at $\left(u^{\left(\alpha\right)},0\right)$ with radius $\delta.$ We can use this to estimate the distance between iterates:
\begin{align*}
    \| ( u^{k+1}, \varphi^{k+1} ) - ( u^k,\varphi^k) \|_Y &\leqslant C_0 \sup_{(u,\varphi) \in [( u^k,\varphi^k),( u^{k-1},\varphi^{k-1})] } \|\Phi^\prime(u,\varphi) - \Phi^\prime( u^{(\alpha)},0) \|_{\mathcal{L}(Y;Z)} \| ( u^k,\varphi^k)- ( u^{k-1},\varphi^{k-1})\|_Y \\
    &\leqslant \frac{1}{2}\| ( u^k,\varphi^k)- ( u^{k-1},\varphi^{k-1})\|_Y.
\end{align*}
We conclude that the iteration process is convergent, thus finishing the proof.
\end{proof}

\subsection{Bilinear control problem} \label{subsec:Bilinear_Algos}
Here, we consider Pareto equilibria for (\ref{pareto-28}) and (\ref{def:ObjFuncs}). The optimality systems for Pareto equilibria are given by (\ref{pareto-28}), (\ref{eq:Bilinear_AdjointPDEs}) and (\ref{paretoeq_linear_mupos}) where (again) $\alpha \in [0,1]$.
\subsubsection{A gradient descent algorithm}

\begin{algorithm}
\SetAlgoLined
\KwResult{The optimal control $v.$}
Initialize with the error variable $\epsilon,$ the tolerance $\epsilon_0,$ $k=0,$ the descent speed $\tau,$ and an initial guess $v^0$ for the control $v.$\\
\While{$\epsilon > \epsilon_0$}{
$1.$ Compute the solution $u^{k+1}$ of \eqref{pareto-28} with $v=v^k;$\\
$2.$ Calculate the solution $\varphi^{k+1}$ of \eqref{eq:Bilinear_AdjointPDEs} with $u=u^{k+1}$ and $v=v^k;$\\
$3.$ Update $v:=v^{k+1} \gets P_\mathcal{U}\left( v^k -\tau J^\prime\left(v^{k}\right) \right) = P_\mathcal{U}\left( \left( 1- \tau \mu \right)v^k - \tau \varphi^{k+1} u^{k+1} \right);$\\
$4.$ Update $\epsilon;$\\
$5.$ $k\gets k+1.$
}
\caption{Gradient descent algorithm for the bilinear problem}
\end{algorithm}

\begin{lemma} \label{lem:LipGradBilinear}
Let us suppose that $N\leqslant 3,$ $u_0 \in H^1_0(\Om)$ and, for some $R>0,$ $\mathcal{U} \subseteq \left\{ v : |v|\leqslant R\right\}.$ Then the mapping $v \mapsto \varphi u$ is Lipschitz-continuous.
\end{lemma}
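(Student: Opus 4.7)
The plan is to split $\varphi u - \varphi' u' = \varphi(u-u') + u'(\varphi-\varphi')$ and bound each term in $L^2(\omega\times(0,T))$ by a constant multiple of $\|v-v'\|_{L^2(\omega\times(0,T))}$. All required ingredients come from energy estimates on the state and adjoint systems, the Sobolev embedding $H^2(\Omega)\hookrightarrow L^\infty(\Omega)$ valid for $N\leqslant 3$, and the uniform $L^\infty$ restriction on $\mathcal{U}$.

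As a first step I would derive the uniform a priori regularity
$$
\|u\|_{L^\infty(0,T;H^1_0(\Omega))}+\|u\|_{L^2(0,T;H^2(\Omega))}+\|u_t\|_{L^2(Q)}\leqslant C
$$
for every $v\in\mathcal{U}$, by two successive energy tests on \eqref{pareto-28}: first against $u$ to obtain an $L^\infty(0,T;L^2)\cap L^2(0,T;H^1_0)$ bound via Gronwall and $|v|\leqslant R$, and then against $-\Delta u$, now using $u_0\in H^1_0(\Omega)$. The completely analogous backward energy argument applied to \eqref{eq:Bilinear_AdjointPDEs}, with the terminal datum controlled by the $u(T)$-bound just obtained, yields the same regularity for $\varphi$. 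Since $N\leqslant 3$, the embedding $H^2(\Omega)\hookrightarrow L^\infty(\Omega)$ places both $u$ and $\varphi$ uniformly into $L^2(0,T;L^\infty(\Omega))$, and a further bootstrap (via parabolic maximum principle applied to $u_t-\Delta u+(v\mathds{1}_\omega)u=0$ with bounded coefficient) upgrades this to a uniform $L^\infty(Q)$ bound on both $u$ and $\varphi$, with constants depending only on $\Omega$, $T$, $R$, $\|u_0\|_{H^1_0}$, $\|u_{01}\|$ and $\|u_{02}\|$.

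Given $v,v'\in\mathcal{U}$, the differences $\delta u:=u-u'$ and $\delta\varphi:=\varphi-\varphi'$ satisfy
$$
(\delta u)_t - \Delta\delta u + v\,\delta u\,\mathds{1}_\omega = -(v-v')u'\,\mathds{1}_\omega,\qquad \delta u(0)=0,
$$
and
$$
-(\delta\varphi)_t - \Delta\delta\varphi + v\,\delta\varphi\,\mathds{1}_\omega = -(v-v')\varphi'\,\mathds{1}_\omega,\qquad \delta\varphi(T)=\bigl[\alpha\mathds{1}_{\mathcal{O}_1}+(1-\alpha)\mathds{1}_{\mathcal{O}_2}\bigr]\delta u(T).
$$
Testing by $\delta u$ and $\delta\varphi$, absorbing the $v$-terms via $|v|\leqslant R$, and using the $L^\infty(Q)$ bounds on $u'$ and $\varphi'$ to estimate
$$
\|(v-v')u'\|_{L^2(Q)}\leqslant \|u'\|_{L^\infty(Q)}\|v-v'\|_{L^2(\omega\times(0,T))}
$$
and similarly for $\varphi'$, Gronwall's lemma yields $\|\delta u\|_{L^\infty(0,T;L^2)}+\|\delta\varphi\|_{L^\infty(0,T;L^2)}\leqslant C\|v-v'\|_{L^2(\omega\times(0,T))}$; the $\delta u(T)$-coupling in the adjoint terminal data is harmless since $\|\delta u(T)\|$ is already controlled by the previous state estimate.

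The conclusion then follows from H\"older's inequality,
$$
\|\varphi\,\delta u\|_{L^2(\omega\times(0,T))}+\|u'\,\delta\varphi\|_{L^2(\omega\times(0,T))} \leqslant \bigl(\|\varphi\|_{L^\infty(Q)}+\|u'\|_{L^\infty(Q)}\bigr)\bigl(\|\delta u\|_{L^2(Q)}+\|\delta\varphi\|_{L^2(Q)}\bigr),
$$
which, by the previous step, is bounded by $C\|v-v'\|_{L^2(\omega\times(0,T))}$. The main technical obstacle is precisely the bootstrap from $u_0\in H^1_0(\Omega)$ to the uniform $L^\infty(Q)$ bound on $u$ (and hence on $\varphi$): it is this step that genuinely combines the $L^\infty$ restriction $|v|\leqslant R$ with the dimension constraint $N\leqslant 3$, and without it one would obtain only the weaker H\"older-$1/2$ continuity of $v\mapsto\varphi u$ rather than the true Lipschitz property.
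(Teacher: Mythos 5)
Your overall strategy is the same as the paper's: split $\varphi u-\varphi'u'=\varphi(u-u')+u'(\varphi-\varphi')$, establish a priori regularity for the state and adjoint, derive Lipschitz estimates for the differences $\delta u$ and $\delta\varphi$ via energy tests and Gronwall, and conclude by H\"older. The difference equations you write down and the $L^\infty(0,T;L^2)$ control of $\delta u,\delta\varphi$ in terms of $\|v-v'\|_{L^2(\omega\times(0,T))}$ match the paper's computation.

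There is, however, one step that does not hold as stated: the ``further bootstrap (via parabolic maximum principle)'' to a \emph{uniform} $L^\infty(Q)$ bound on $u$ and $\varphi$. The maximum principle for $u_t-\Delta u+v\mathds{1}_\omega u=0$ with $|v|\leqslant R$ would give $\|u\|_{L^\infty(Q)}\leqslant e^{RT}\|u_0\|_{L^\infty(\Omega)}$, but the hypothesis is only $u_0\in H^1_0(\Omega)$, and $H^1_0(\Omega)\not\hookrightarrow L^\infty(\Omega)$ for $N\geqslant 2$; the situation is even worse for the adjoint, whose terminal datum $\alpha(u(T)-u_1(T))\mathds{1}_{\mathcal{O}_1}+(1-\alpha)(u(T)-u_2(T))\mathds{1}_{\mathcal{O}_2}$ is merely $L^2$ and discontinuous across $\partial\mathcal{O}_i$. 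So the final H\"older step in the form $\|\varphi\,\delta u\|_{L^2(Q)}\leqslant\|\varphi\|_{L^\infty(Q)}\|\delta u\|_{L^2(Q)}$ rests on a bound you have not actually proved. The fix is exactly what the paper does, and you already have every ingredient for it: use the mixed-norm H\"older inequality
\begin{equation*}
\iint_Q |\varphi|^2|\delta u|^2\,dx\,dt\ \leqslant\ \|\delta u\|_{L^\infty(0,T;L^2(\Omega))}^2\int_0^T\|\varphi(t)\|_{L^\infty(\Omega)}^2\,dt\ \leqslant\ C\,\|\delta u\|_{L^\infty(0,T;L^2(\Omega))}^2\int_0^T\|\Delta\varphi(t)\|^2\,dt,
\end{equation*}
which only requires $\varphi\in L^2(0,T;H^2(\Omega))$ (hence, by $H^2(\Omega)\hookrightarrow L^\infty(\Omega)$ for $N\leqslant 3$, $\varphi\in L^2(0,T;L^\infty(\Omega))$) together with $\delta u\in L^\infty(0,T;L^2(\Omega))$ --- both of which you have already derived --- and symmetrically for $u'\,\delta\varphi$. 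With the unjustified $L^\infty(Q)$ detour removed and replaced by this $L^2_tL^\infty_x\times L^\infty_tL^2_x$ pairing, your argument coincides with the paper's proof.
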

\begin{proof}
Let us denote by $u^v$ and $\varphi^v$ the state and adjoint state corresponding to the control $v$. It is straightforward to prove that
$$
\sup_{\left[0,T\right]}\left(\|\nabla u^v\|^2 + \|\Delta u^v\|^2\right)+ \sup_{\left[0,T\right]}\left(\|\nabla \varphi^v\|^2 + \|\Delta \varphi^v\|^2\right)\leqslant C\| u_0 \|_{H^1_0(\Om)}^2.
$$

For any $v,v^\prime \in \mathcal{U},$ let us set $\delta u := u^v - u^{v^\prime}$ and $\delta \varphi := \varphi^v - \varphi^{v^\prime}.$ Then,
\begin{align*}
    \frac{1}{2}\frac{d}{dt}\left( \int_\Om |\delta u|^2\,dx\right) + \int_\Om |\nabla \delta u|^2\,dx &= \int_\Om v (\delta u)^2\,dx + \int_\Om u^{v^\prime}(v-v^\prime)\delta u \,dx \\
    &\leqslant C \left( \|\delta u \|^2 + \|\nabla u^{v^\prime}\|\|v-v^\prime\|\|\nabla \delta u\| \right) \\
    &\leqslant C \left( \|\delta u \|^2 + \|\delta v\|^2 \right) + \frac{1}{2}\|\nabla \delta u\|^2,
\end{align*}
whence
$$
\sup_{\left[0,T\right]}\left(\|\delta u\|^2 + \|\nabla \delta u\|^2\right) \leqslant C\|v-v^\prime\|^2.
$$
Similar estimates give
$$
\sup_{\left[0,T\right]}\|\delta \varphi\|^2 + \|\nabla \delta \varphi\|^2 \leqslant C\|v-v^\prime\|^2.
$$
Finally, we deduce that
\begin{align*}
    \iint_Q \left| u^v \varphi^v - u^{v^\prime}\varphi^{v^\prime}\right|^2\,dx\,dt &\leqslant C\iint_Q \left( \left|\delta u\right|^2 \left|\varphi^v \right|^2 + \left|u^{v^\prime}\right|^2\left|\delta \varphi \right|^2 \right)\,dx\,dt \\
    &\leqslant \int_0^T \left[ \|\Delta \varphi^{v}\|^2 \|\delta u\|^2 + \|\Delta u^{v^\prime}\|^2 \|\delta \varphi\|^2 \right]\,dt \\
    &\leqslant C \|v-v^\prime\|^2.
\end{align*}
\end{proof}

From Lemma \ref{lem:LipGradBilinear} and Theorem 10.5.8 in \cite{allaire2007numerical}, the following convergence result holds:

\begin{theorem}
Under the assumptions of Lemma \ref{lem:LipGradBilinear}, if $\mu/\|u_0\|$ is sufficiently large and the descent step $\tau > 0$ is sufficiently small, then Algorithm $5$ converges, i.e., $v^k \rightarrow v$ strongly in $L^2\left( \om \times \left(0,T\right)\right).$
\end{theorem}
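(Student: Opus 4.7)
The plan is to realize Algorithm 5 as the standard projected gradient algorithm applied to $J_{\left(\alpha\right)}$, and then to invoke the classical convergence theorem for projected gradient schemes on strongly convex functionals with Lipschitz continuous gradient (Theorem 10.5.8 in \cite{allaire2007numerical}). This requires verifying two structural properties of $J_{\left(\alpha\right)}$ on $\mathcal{U}$: (i) its G\^ateaux gradient $J_{\left(\alpha\right)}^\prime(v) = \varphi^v u^v + \mu v$ is Lipschitz continuous from $L^2\left(\omega\times(0,T)\right)$ into itself, and (ii) $J_{\left(\alpha\right)}$ is $\beta$-strongly convex for some $\beta>0$.

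For the Lipschitz property, I would simply invoke Lemma \ref{lem:LipGradBilinear}, which gives that $v\mapsto \varphi^v u^v$ is Lipschitz with some constant $L_0 = L_0(\Om,T,R,\|u_{01}\|_{H^1_0(\Om)},\|u_{02}\|_{H^1_0(\Om)},\|u_0\|_{H^1_0(\Om)})$; combining this with the trivial Lipschitz character of $v\mapsto \mu v$ yields a global Lipschitz constant $L = L_0 + \mu$ for $J_{\left(\alpha\right)}^\prime$ on $\mathcal{U}$.

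For strong convexity, I would reproduce the second-derivative analysis already carried out in the proof of the last theorem of Section \ref{sec:bilinear} (the one handling uniformly bounded controls in dimension $N\leqslant 3$): combining \eqref{eq:SecondDeriv_UnifBddControls}--\eqref{eq:Est2BilinearUnifBdd} with the embedding $H^1(\Om)\hookrightarrow L^6(\Om)$, one obtains
\begin{equation*}
J_{\left(\alpha\right)}^{\prime\prime}(v;w-v,w-v) \geqslant \left(\mu - C\|u_0\|_{H^1_0(\Om)}^2\right)\|w-v\|_{L^2(\omega\times(0,T))}^2,
\end{equation*}
with $C$ depending only on $\Om,T,R,\|u_{01}\|_{H^1_0(\Om)},\|u_{02}\|_{H^1_0(\Om)}$. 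Hence, provided $\mu/\|u_0\|_{H^1_0(\Om)}$ (and thus $\mu/\|u_0\|_{H^1_0(\Om)}^2$ after choosing appropriate thresholds) is large enough, $J_{\left(\alpha\right)}$ is $\beta$-strongly convex on $\mathcal{U}$ with $\beta := \mu - C\|u_0\|_{H^1_0(\Om)}^2 > 0$.

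With (i) and (ii) in hand, Theorem 10.5.8 of \cite{allaire2007numerical} directly gives that, for any descent step $\tau \in (0, 2\beta/L^2)$, the projected gradient iteration $v^{k+1} = P_\mathcal{U}\left(v^k - \tau J_{\left(\alpha\right)}^\prime(v^k)\right)$ converges linearly to the unique minimizer $v$ of $J_{\left(\alpha\right)}$ on $\mathcal{U}$, strongly in $L^2\left(\om\times(0,T)\right)$. Since the updates in Algorithm 5 coincide with this projected gradient scheme (step 3 reads exactly $v^{k+1} = P_\mathcal{U}(v^k - \tau J_{\left(\alpha\right)}^\prime(v^k))$), the conclusion follows. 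The only subtle point I anticipate is tracking the precise dependence of the constants so as to show that the prescribed smallness of $\tau$ and largeness of $\mu/\|u_0\|$ can indeed be made compatible; this amounts to a careful bookkeeping of the constants appearing in Lemma \ref{lem:LipGradBilinear} and in the second-order estimate above, but introduces no essentially new difficulty.
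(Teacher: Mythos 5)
Your proposal is correct and follows essentially the same route as the paper, which proves this theorem in one line by citing Lemma \ref{lem:LipGradBilinear} together with Theorem 10.5.8 of \cite{allaire2007numerical}. Your write-up merely makes explicit the two hypotheses (Lipschitz continuity of $J_{(\alpha)}^\prime$ and strong convexity of $J_{(\alpha)}$ for $\mu/\|u_0\|$ large) that the paper leaves implicit in that citation.
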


\subsubsection{An iterative fixed-point algorithm}

\begin{algorithm}
\SetAlgoLined
\KwResult{The optimal control $v.$}
Initialize with the error variable $\epsilon,$ the tolerance $\epsilon_0,$ $k=0,$ and an initial guess $v^0$ for the control $v.$\\
\While{$\epsilon > \epsilon_0$}{
$1.$ Compute the solution $u^{k+1}$ of \eqref{pareto-28} with $v=v^k;$\\
$2.$ Compute the solution $\varphi^{k+1}$ of \eqref{eq:Bilinear_AdjointPDEs} with $u=u^{k+1}$ and $v=v^k;$\\
$3.$ Update $v:=v^{k+1} \gets P_\mathcal{U}\left( - u^{k+1} \varphi^{k+1} / \mu \right);$\\
$4.$ Update $\epsilon;$\\
$5.$ $k\gets k+1.$
}
\caption{Iterative algorithm for the bilinear problem}
\end{algorithm}

\begin{theorem}
Let us assume that $N\leqslant 3,$ $\mathcal{U} \subseteq \left\{v: |v|\leqslant R\right\}$ and also that the projection $P_\mathcal{U}$ has the following property: $$ 
\text{If}\ f^k \rightarrow f \ \text{a.e. in $Q$, then}\  P_\mathcal{U}(f^k) \rightarrow P_\mathcal{U}(f) \text{ a.e. in } Q.
$$ 
Then, the sequence $\left\{ v^k\right\}$ furnished by Algorithm $6$ converges to a Pareto equilibrium $v$, that is, $v^k \rightarrow v$ strongly in $L^1(Q)$ and a.e. in $Q.$
\end{theorem}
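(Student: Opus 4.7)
The plan is to mimic the compactness plus pass-to-the-limit argument from Theorem \ref{thm:Bilinear1stMainThm}, and to close the iteration using the continuity assumption on $P_\mathcal{U}$. Since $|v^k|\leqslant R$ a.e.\ in $Q$, standard energy estimates on \eqref{pareto-28} with $v=v^k$ (multiply by $u^{k+1}$, integrate in $\Om$, and apply Gronwall's lemma) give a uniform bound
$$
\|u^{k+1}\|_{L^\infty(0,T;L^2(\Om))}+\|u^{k+1}\|_{L^2(0,T;H^1_0(\Om))}\leqslant C,
$$
with $C=C(\Om,T,R,\|u_0\|)$ independent of $k$. The right-hand side of \eqref{eq:Bilinear_AdjointPDEs} then propagates to a uniform bound on $\{\varphi^{k+1}\}$ in the same spaces (its terminal data are controlled by $\|u^{k+1}(T)\|$, $\|u_1(T)\|$, $\|u_2(T)\|$), and the PDEs themselves furnish uniform bounds on $u^{k+1}_t$ and $\varphi^{k+1}_t$ in $L^2(0,T;H^{-1}(\Om))$.

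By the Aubin--Lions compactness theorem, the sequences $\{u^k\}$ and $\{\varphi^k\}$ are relatively compact in $L^2(Q)$. Extracting a subsequence (not relabelled), I may assume that $u^k\to \tilde u$ and $\varphi^k\to\tilde\varphi$ strongly in $L^2(Q)$ and a.e.\ in $Q$, weakly in $L^2(0,T;H^1_0(\Om))$, and weakly-$\ast$ in $L^\infty(0,T;L^2(\Om))$, while $v^k$ converges weakly-$\ast$ in $L^\infty(Q)$ to some $\tilde v$ with $|\tilde v|\leqslant R$ a.e. The uniform $L^\infty$-bound on $v^k$ together with dominated convergence yields $v^k u^{k+1}\to \tilde v\,\tilde u$ and $v^k\varphi^{k+1}\to \tilde v\,\tilde\varphi$ in $L^2(Q)$, which is enough to pass to the limit in \eqref{pareto-28} and \eqref{eq:Bilinear_AdjointPDEs} and identify $(\tilde u,\tilde\varphi)$ as the state/adjoint state associated with $\tilde v$. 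Since $u^{k+1}\varphi^{k+1}\to \tilde u\,\tilde\varphi$ a.e.\ in $Q$, the assumed a.e.\ continuity of $P_\mathcal{U}$ gives
$$
v^{k+1} = P_\mathcal{U}\!\left(-\tfrac{1}{\mu}u^{k+1}\varphi^{k+1}\right)\ \longrightarrow\ P_\mathcal{U}\!\left(-\tfrac{1}{\mu}\tilde u\,\tilde\varphi\right) \text{ a.e.\ in } Q.
$$
Matching this pointwise limit with the previously identified weak-$\ast$ limit forces $\tilde v = P_\mathcal{U}(-\tilde u\,\tilde\varphi/\mu)$, which is precisely the optimality characterization for a Pareto quasi-equilibrium and, under the present hypotheses (echoing the proofs of Theorems \ref{th.5} and \ref{thm:Bilinear1stMainThm}), for a Pareto equilibrium. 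Finally, the uniform bound $|v^k|\leqslant R$ and dominated convergence upgrade the a.e.\ convergence $v^k\to \tilde v=:v$ to strong convergence in $L^1(Q)$.

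The main obstacle I foresee is the passage from subsequential convergence to convergence of the \emph{entire} sequence $\{v^k\}$, since the compactness argument above only delivers cluster points. The standard remedy is to observe that every subsequence of $\{v^k\}$ admits, by the previous reasoning, a further subsequence converging to a fixed point of the map $v\mapsto P_\mathcal{U}(-u^v\varphi^v/\mu)$; uniqueness of this fixed point—following from a contractivity/coercivity estimate in the spirit of Theorem \ref{th.5}, made quantitative by the a priori bounds $\|u^v\|$, $\|\varphi^v\|\leqslant C(\Om,T,R,\|u_0\|)$—then forces all cluster points to coincide and hence yields convergence of the full sequence. A second, more routine but delicate, technical point is keeping track of the two-level dependence $v^k\leadsto u^{k+1}\leadsto\varphi^{k+1}\leadsto v^{k+1}$ when identifying limits inside the bilinear product $u^{k+1}\varphi^{k+1}$; this is handled cleanly by the strong $L^2(Q)$ convergences above together with the $L^\infty$ bound on $v^k$.
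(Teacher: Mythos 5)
Your proof follows essentially the same route as the paper's: uniform energy estimates for $u^k$ and $\varphi^k$, compactness yielding strong $L^2(Q)$ and a.e.\ convergence, hence a.e.\ convergence of the product $u^{k+1}\varphi^{k+1}$, and the assumed a.e.\ continuity of $P_\mathcal{U}$ to close the iteration and upgrade to $L^1(Q)$ convergence by dominated convergence. You are in fact more explicit than the paper (whose proof ends with ``the result now follows easily''), and the subsequence-versus-full-sequence issue you flag is genuinely present and left unaddressed in the paper's own argument.
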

\begin{proof}
Carrying out standard energy estimates in the equation satisfied by $u^k,$ we obtain:
$$
\sup_{\left[0,T\right] }\left(\|u^k(t)\| + \|\nabla u^k\|\right) \leqslant C.
$$
Likewise, the same estimates for $\varphi^k$ yield
$$
\sup_{\left[0,T\right]}\left(\|\varphi^k(t)\| + \|\nabla \varphi^k \|\right) \leqslant C\left\|u^k\left(\cdot,T\right)\right\| \leqslant C.
$$
Arguing as usual, we can assume that there exist $u,\varphi \in L^2(Q)$ for which $u^k \rightarrow u$ and $\varphi^k \rightarrow \varphi$ a.e. in $Q,$ as well as strongly in $L^2(Q);$ hence $u^k \varphi^k \rightarrow u \varphi$ a.e. in $Q$ and strongly in $L^1(Q).$ The result now follows easily.
\end{proof}




\eject

\section{Computation of equilibria (II): Numerical experiments}\label{sec:Numerics2}

\subsection{Introduction}

In this section, we intend to illustrate the behavior of the algorithms in Section \ref{sec:Numerics1}. For each model, i.e. the linear, semilinear and bilinear systems, we present 2D and 3D results. More precisely, the data for problem (\ref{def:ObjFuncs})-(\ref{pareto-1}) have been the following:
{ \begin{itemize}

    \item Consider $\mathcal{B}_{r}(x_0,y_0)$ the ball centered in $(x_0,y_0)$ with r-radius;
    
    \item $OX_{\omega}:= [-1.5, \, 1.5]$,  \ $O_1X_{\omega}:=[-1.5, \, 0.3]$, \ $O_2X_{\omega}:=[-0.3, \, 1.5]$, \ $OY_{\omega}:=[0, \, 1.5]$, and $OZ:=[0, \, 3]$;
    
    \item If $N=2$, $\Omega= \mathcal{B}_{3}(0,0)$, \  $\om= OX_{\omega} \times OY_{\omega}$, $T=0.5$, \ $\mathcal{O}_1= O_1X_{\omega} \times OY_{\omega}$ and \ $\mathcal{O}_2= O_2X_{\omega} \times OY_{\omega}$;
    
    For the linear and semilinear cases, we consider the initial dates: 
    
    \begin{equation*}
    \begin{cases}
     u_{0}(x,y)=0;\\
     u_{0,1}(x,y)= 3-\sqrt{(x^2+y^2)};\\ 
     u_{0,2}(x,y)= \sqrt{(x^2+y^2)}-3.
     \end{cases}
    \end{equation*}
    
    in the bilinear case, we consider: 
    
      \begin{equation*}
    \begin{cases}
     u_{0}(x,y)=x^3y^3\sin\biggl(\displaystyle\frac{2\pi}{3}\sqrt{(x^2+y^2)}\biggr);\\
     u_{0,1}(x,y)= 3-\sqrt{(x^2+y^2)};\\ 
     u_{0,2}(x,y)= \sqrt{(x^2+y^2)}-3.
     \end{cases}
    \end{equation*}
 
      \item If $N=3$, $\Omega= \mathcal{B}_{3}(0,0) \times OZ$, \  $\om= OX_{\omega} \times OY_{\omega} \times OZ$, $T=0.5$, \ $\mathcal{O}_1= O_1X_{\omega} \times OY_{\omega} \times OZ$ and \ $\mathcal{O}_2= O_2X_{\omega} \times OY_{\omega} \times OZ$. 
      
     Now, for the linear and semilinear cases, we consider:  
    
    \begin{equation*}
    \begin{cases}
      u_{0}(x,y,z)=0;\\
     u_{0,1}(x,y,z)= \bigl(3-\sqrt{(x^2+y^2+z^2)}\bigr);\\ 
     u_{0,2}(x,y,z)=  \bigl(\sqrt{(x^2+y^2+z^2)}-3\bigr).
      \end{cases}
    \end{equation*}
    
    and finally, the bilinear case we consider: 
  
    \begin{equation*}
    \begin{cases}
     \displaystyle u_{0}(x,y,z)=x^3 y^3z^2\sin\biggl(\frac{2\pi}{3}z\sqrt{(x^2+y^2)}\biggr);\\
     u_{0,1}(x,y,z)= \bigl(3-\sqrt{(x^2+y^2)}\bigr)z;\\ 
     u_{0,2}(x,y,z)= \bigl(\sqrt{(x^2+y^2)}-3\bigr)z.
     \end{cases}
    \end{equation*}
   
\end{itemize}}

The meshes used in the numerical tests are displayed in Fig. 1. On the other hand, the targets, that is, the $u_i$ at time $T$, are depicted in Fig. 2 and 3.

\begin{figure}[H]
\centering
\includegraphics[scale = 0.4]{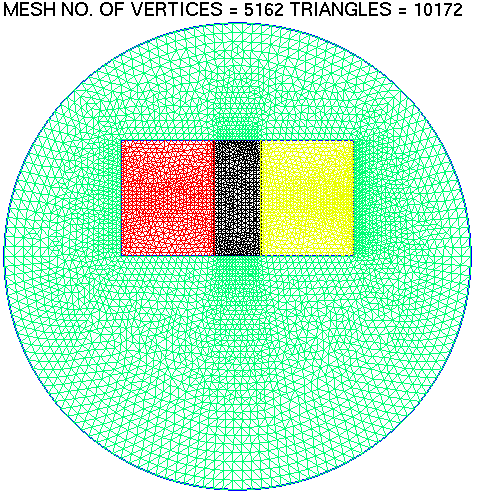}
\includegraphics[scale = 0.4]{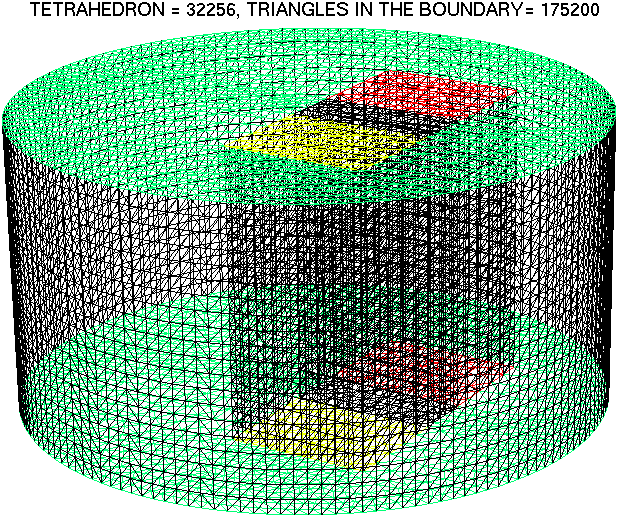}
\caption{The mesh of the domains. In the left, we showcase that of the 2D domain, and in the right, the one we used in the 3D experiments.}
\label{fig:domains_meshes}
\end{figure}

\begin{figure}[H]
\centering
\includegraphics[scale = 0.42]{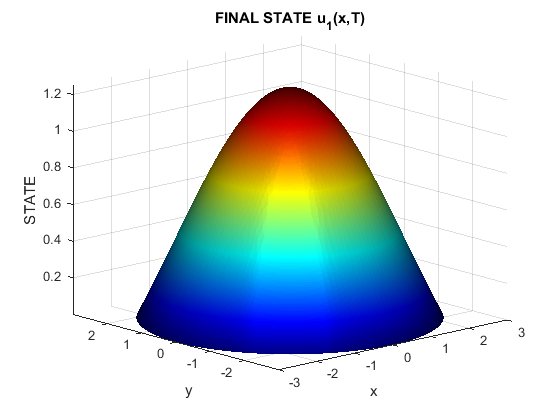} 
\includegraphics[scale = 0.42]{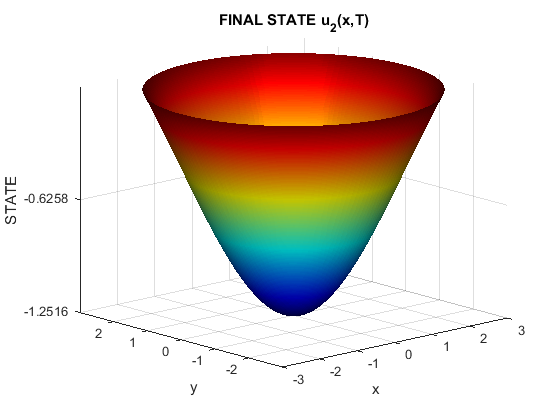}
\caption{Surfaces of the trajectories at terminal time, $u_1(\cdot,T)$ (left) and $u_2(\cdot,T)$ (right), that we used in the experiments of the 2D problems.}
\label{fig:targets_2D}
\end{figure}

\begin{figure}[H]
\centering
\includegraphics[scale = 0.42]{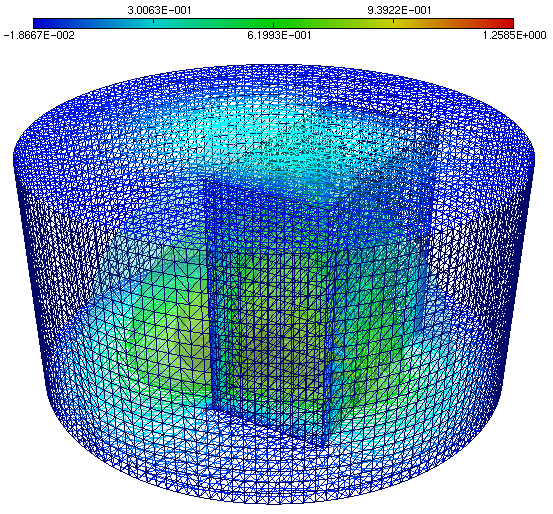}
\includegraphics[scale = 0.42]{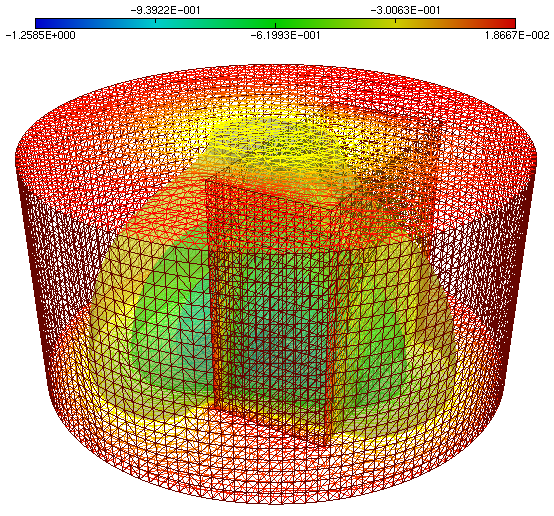}
\caption{The values of the trajectories at terminal time, $u_1(\cdot,T)$ (left) and $u_2(\cdot,T)$ (right), that we used in the experiments of the 3D problems.}
\label{fig:targets_3D}
\end{figure}

\subsection{Linear model}

The problem to solve in this case is (\ref{pareto-1}), (\ref{pareto-5}), (\ref{paretoeq_linear_mupos}). This has been achieved with Algorithm $1$. The final states corresponding to several values of $\alpha$ have been shown in Fig. 4.

\subsubsection{Experiments in the 2D case (Test 1)}\label{COMP-L-2D}



\begin{figure}[H]
\centering
\includegraphics[scale = .33]{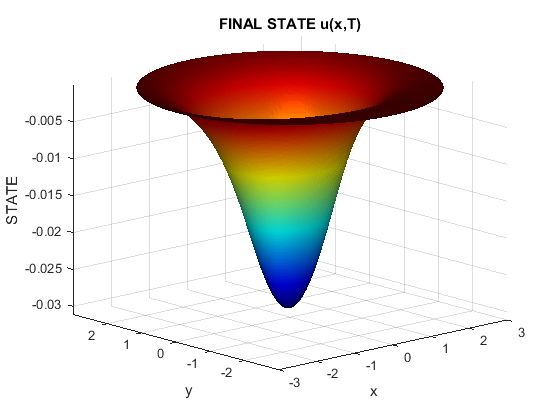}
\includegraphics[scale = .33]{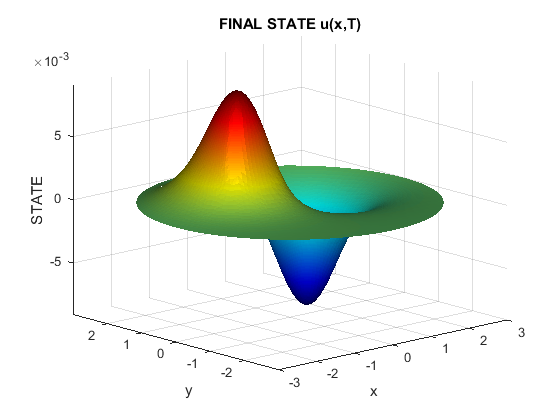}
\includegraphics[scale = .33]{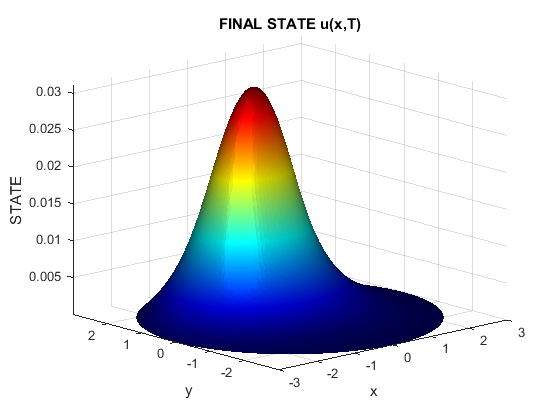}
\caption{Test 1 - The final state $u(\cdot,T),$ under the action of the Pareto optimal control $v$ corresponding to $\mu=5$ and $\alpha=0.05$ (left), $\alpha=0.5$ (middle), and $\alpha=0.95$ (right).}
\end{figure}

We have also shown the norms $\left\|u(T)-u_1(T)\right\|$ and $\left\|u(T)-u_2(T)\right\|$ associated to several values of $\mu$ and $\alpha$ in Fig. 5.

\begin{figure}[H]
\centering
 \includegraphics[scale = 0.4]{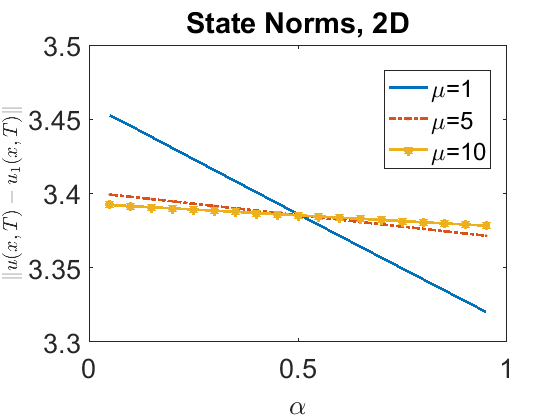}
\includegraphics[scale = 0.4]{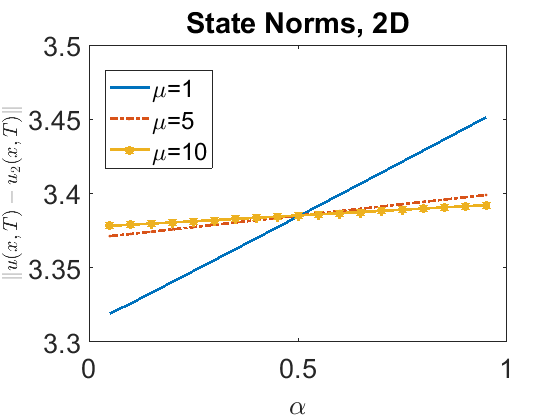}
\caption{Test 1 - The norms $\left\|u(T)-u_1(T)\right\|$ and $\left\|u(T)-u_2(T)\right\|$ corresponding to the computed Pareto equilibria for $\mu \in \left\{ 1,\, 5,\,10\right\}$ and various $\alpha \in \left]0, 1\right[.$}
\end{figure}

On the other hand, the norm of the computed controls have been depicted in Fig. 6. Also, the numbers of iterates appears in Fig. 7.

\begin{figure}[H]
    \centering
    \includegraphics[scale = .5]{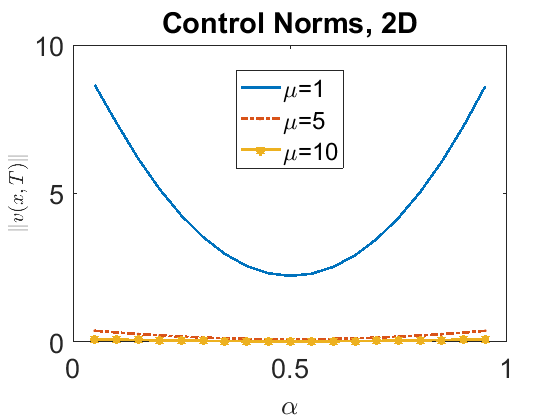}
    \caption{Test 1 - The norms $\|v\|_{L^2\left( \left(0,T\right)\times \omega\right)}$ of the computed Pareto equilibria for $\mu \in \left\{ 1,\, 5,\, 10\right\}$ and various $\alpha \in \left]0,1\right[.$}
\end{figure}

\begin{figure}[H]
\centering
\includegraphics[scale = 0.5]{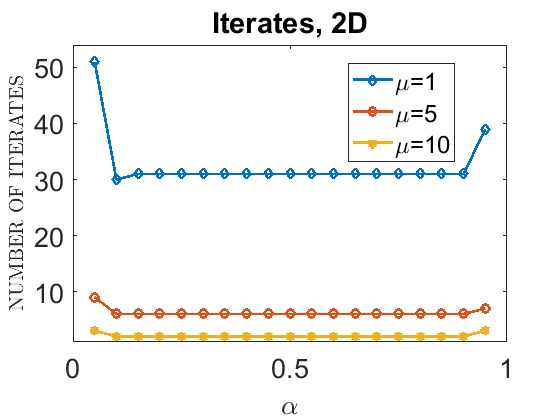}
\caption{Test 1 - The numbers of iterates needed to attain convergence with Algorithm $1$. Tolerance = $10^{-8}$.}
\end{figure}

\subsubsection{Experiments in the 3D case (Test 2)}\label{COMP-L-3D}
We have shown the norms $\left\|u(T)-u_1(T)\right\|$ and $\left\|u(T)-u_2(T)\right\|$ associated to several values of $\mu$ and $\alpha$ in Fig. 8. 
\begin{figure}[H]
\centering
\includegraphics[scale = 0.4]{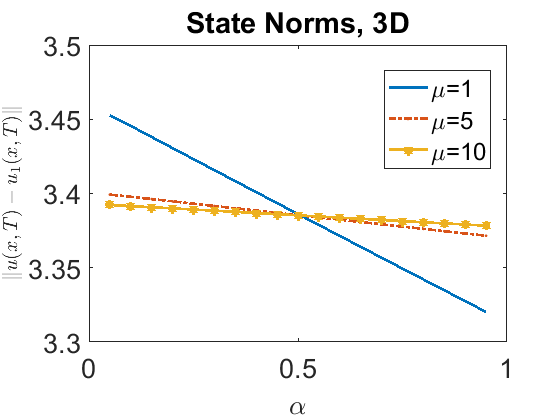}
\includegraphics[scale = 0.4]{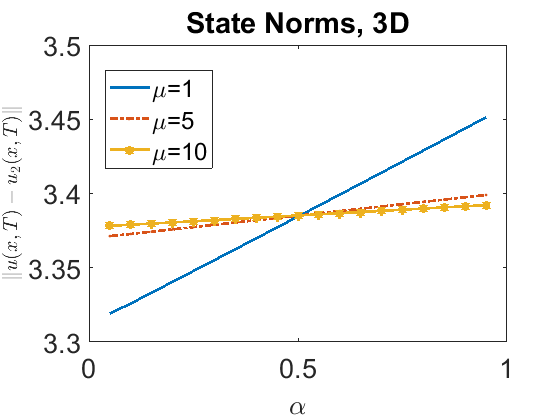}
\caption{Test 2 - The norms $\left\|u(T)-u_1(T)\right\|$ and $\left\|u(T)-u_2(T)\right\|$ corresponding to the computed Pareto equilibria for $\mu \in \left\{ 1,\, 5,\,10\right\}$ and various $\alpha \in \left]0, 1\right[.$}
\end{figure}
On the other hand, the norms of the computed controls have been depicted in Fig. 9. Also, the number of iterates appears in Fig. 10.
\begin{figure}[H]
\centering
\includegraphics[scale = 0.5]{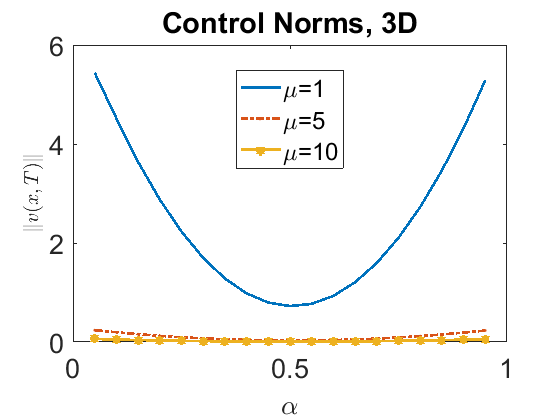}
\caption{Test 2 - The norms $\|v\|_{L^2\left( \left(0,T\right) \times \omega \right)}$ of the computed Pareto equilibria for $\mu \in \left\{ 1,\, 5,\, 10 \right\}$ and various $\alpha \in \left]0,1\right[.$}
\end{figure}

\begin{figure}[H]
\centering
\includegraphics[scale = 0.5]{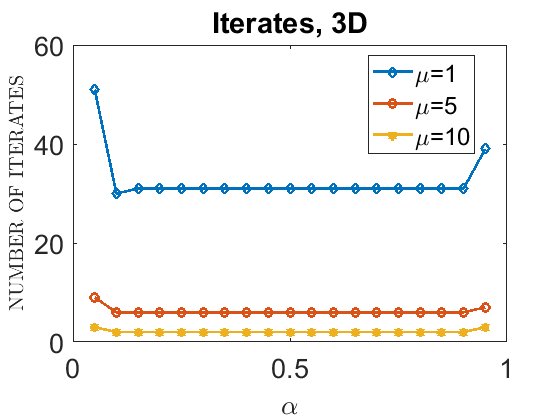}
\caption{Test 2 - The number of iterates needed to attain convergence with Algorithm 1. Tolerance = $10^{-8}$.}
\end{figure}

\subsection{Semilinear model}

Here, we try to compute Pareto equilibria for (\ref{pareto-15}) and (\ref{def:ObjFuncs}). The function $F$ is given by $${ F(s)= s\cdot(1+sin(s)), \ \ \ \forall s\in \mathbb{R}. }$$

\subsubsection{Experiments for the 2D case (Test 3)}\label{COMP-SL-2D}
\begin{center}
    \textbf{Algorithm 3}
\end{center}




\begin{figure}[H]
\centering
\includegraphics[scale = 0.4]{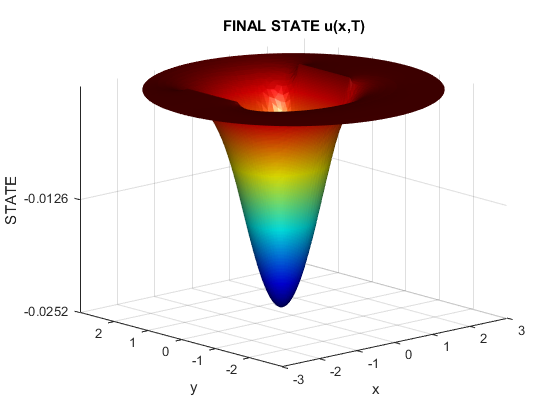}
\includegraphics[scale = 0.4]{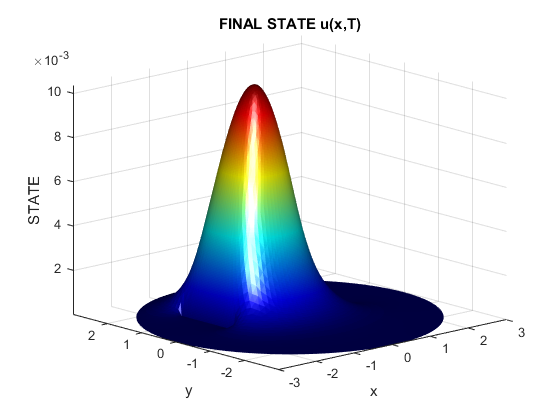}
\caption{Test 3 - The final state $u(\cdot,T)$ under the action of the Pareto optimal control $v$ corresponding to $\mu=5$ and $\alpha=0.05$ (left) and $\alpha = 0.95$ (right).}
\end{figure}

We have also shown the norms $\left\|u(T)-u_1(T)\right\|$ and $\left\|u(T)-u_2(T)\right\|$ associated to several values of $\mu$ and $\alpha$ in Fig. 12.

\begin{figure}[H]
\centering
\includegraphics[scale = 0.4]{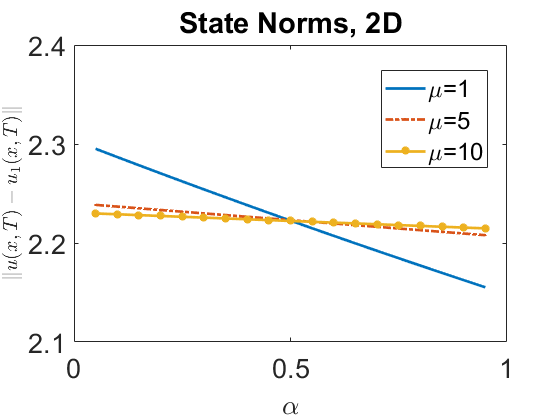}
\includegraphics[scale = 0.4]{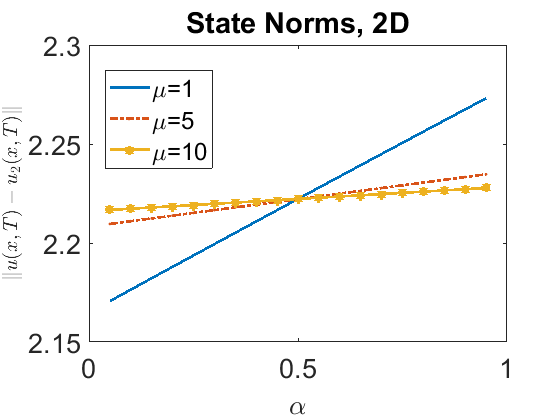}
\caption{Test 3 - The norms of $\|u(T)-u_1(T)\|$ and $\|u(T)-u_2(T)\|$ corresponding to the computed Pareto equilibria for $\mu\in \left\{ 1,\, 5,\,10\right\}$ and various $\alpha \in \left] 0, 1 \right[.$}
\end{figure}

On the other hand, the norms of the computed controls have been depicted in Fig. 13. Also, the number of iterates appears in Fig. 14.

\begin{figure}[H]
\centering
\includegraphics[scale = 0.5]{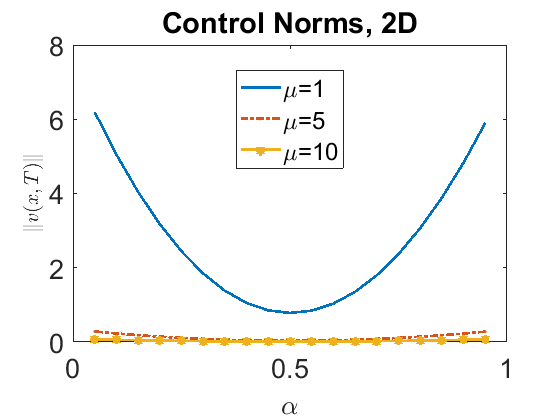}
\caption{Test 3 - The norms $\|v\|_{L^2\left( \left(0,T\right) \times \omega \right)}$ of the computed Pareto equilibria for $\mu \in \left\{ 1,\, 5,\,10\right\}$ and various $\alpha\in\left] 0,1\right[.$}
\end{figure}

\begin{figure}[H]
\centering
\includegraphics[scale = 0.5]{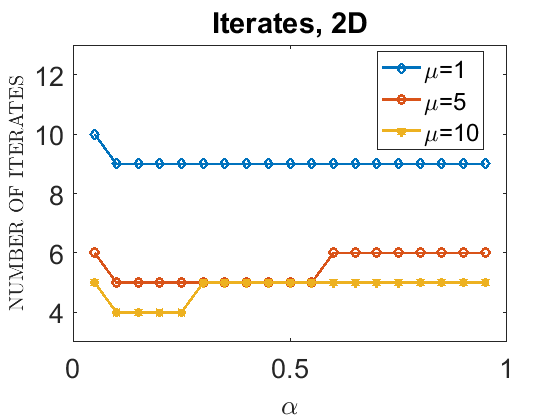}
\caption{Test 3 - The numbers of iterates needed for convergence. Tolerance = $10^{-8}$.}
\end{figure}

\begin{center}
    \textbf{Algorithm 4}
\end{center}




\begin{figure}[H]
\centering
\includegraphics[scale = 0.4]{SL-u-alpha005}
\includegraphics[scale = 0.4]{SL-u-alpha095}
\caption{Test 3 - The final state $u(\cdot,T)$ under the action of the Pareto optimal control $v$ corresponding to $\mu=5$ and $\alpha=0.05$ (left) and $\alpha = 0.95$ (right).}
\end{figure}

We have also shown the norms $\left\|u(T)-u_1(T)\right\|$ and $\left\|u(T)-u_2(T)\right\|$ associated to several values of $\mu$ and $\alpha$ in Fig. 16.

\begin{figure}[H]
\centering
\includegraphics[scale = 0.4]{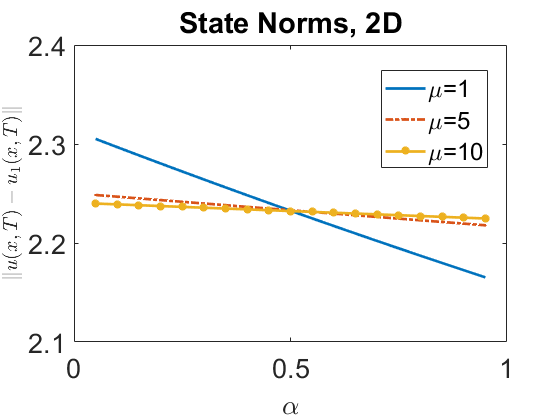}
\includegraphics[scale = 0.4]{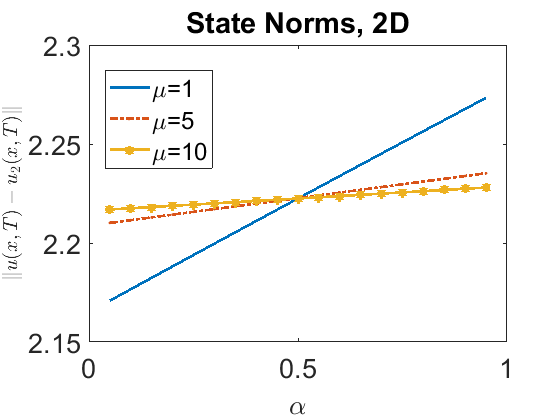}
\caption{Test 3 - The norms of $\|u(\cdot,T)-u_1(\cdot,T)\|$ and $\|u(\cdot,T)-u_2(\cdot,T)\|$ corresponding to the computed Pareto equilibria for $\mu\in \left\{ 1,\, 5,\,10\right\}$ and various $\alpha \in \left] 0, 1 \right[.$}
\end{figure}

On the other hand, the norms of the computed controls have been depicted in Fig. 17. Also, the number of iterates appears in Fig. 18.

\begin{figure}[H]
\centering
\includegraphics[scale = 0.5]{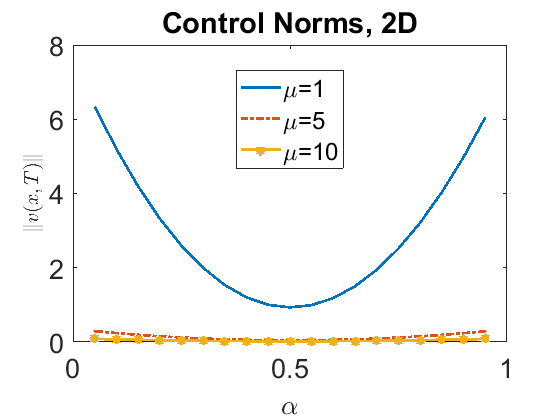}
\caption{Test 3 - The norms $\|v\|_{L^2\left( \left(0,T\right) \times \omega \right)}$ of the computed Pareto equilibria for $\mu \in \left\{ 1,\, 5,\,10\right\}$ and various $\alpha\in\left] 0,1\right[.$}
\end{figure}

\begin{figure}[H]
\centering
\includegraphics[scale = 0.5]{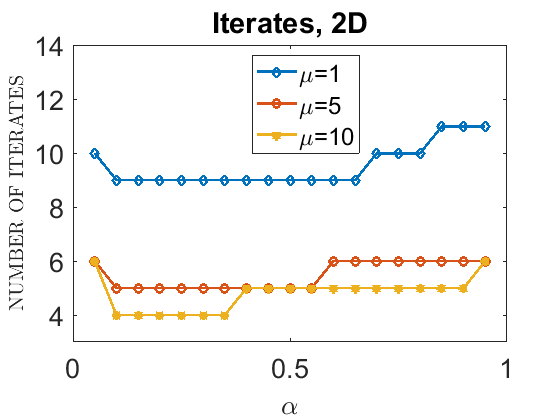}
\caption{Test 3 - The numbers of iterates needed for convergence. Tolerance = $10^{-8}$.}
\end{figure}

\subsubsection{Experiments in the 3D case (Test 4)}

\begin{center}
    \textbf{Algorithm 3}
\end{center}

\begin{figure}[H]
\centering
\includegraphics[scale = 0.4]{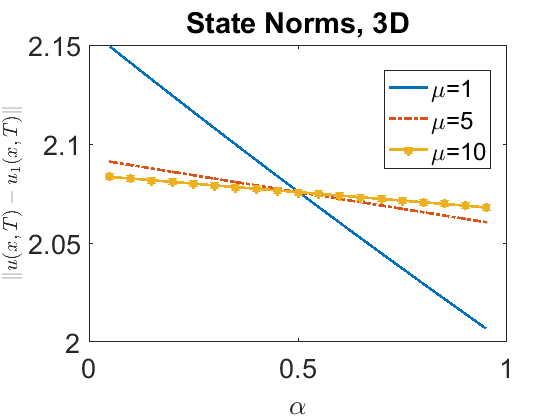}
\includegraphics[scale = 0.4]{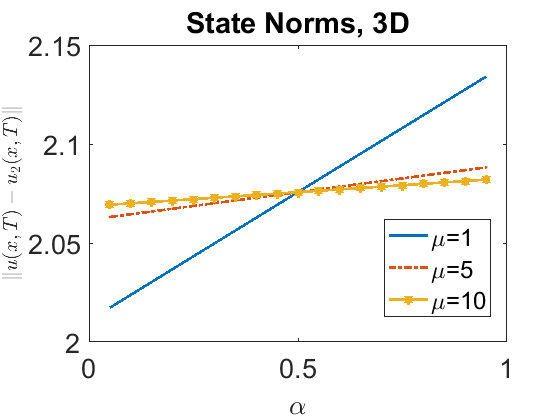}
\caption{Test 4 - Plot of the norms $\|u(T)-u_1(T)\|$ (left) and $\|u(T)-u_2(T)\|$ (right) for the Algorithm 3, with parameters ranging $\mu \in \left\{ 1,\, 5,\, 10 \right\}$ and $\alpha \in \left] 0,1 \right[.$}
\end{figure}

\begin{figure}[H]
\centering
\includegraphics[scale = 0.5]{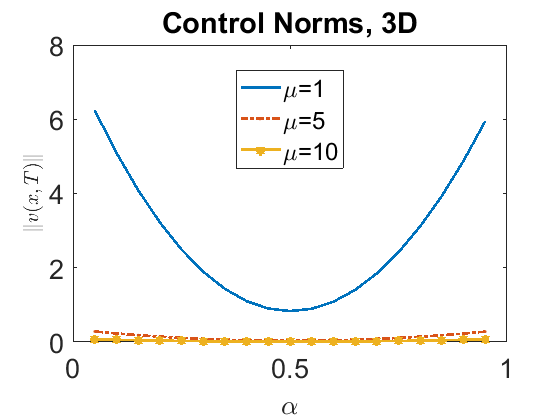}
\caption{Test 4 - Plot of the norms $\|v\|_{L^2\left( \left(0,T\right) \times \omega\right)}$ for parameter values ranging $\mu  \in \left\{ 1,\, 5,\,  10\right\}$ and $\alpha \in \left] 0, 1\right[.$}
\end{figure}

\begin{figure}[H]
\centering
\includegraphics[scale = 0.5]{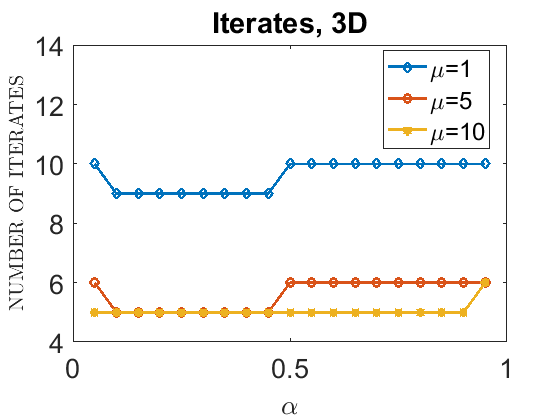}
\caption{Test 4 - Plot of the number of iterates we needed for convergence. Tolerance = $10^{-8}$.}
\end{figure}

\begin{center}
    \textbf{Algorithm 4}
\end{center}

\begin{figure}[H]
\centering
\includegraphics[scale = 0.4]{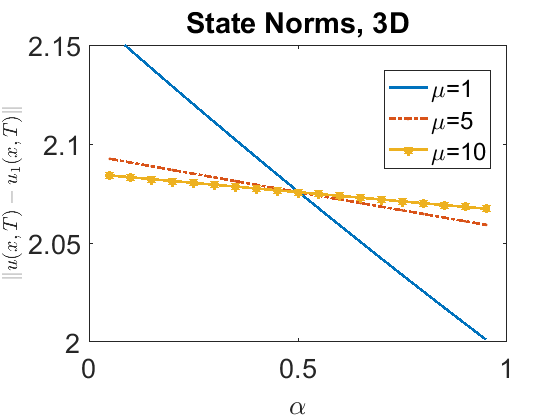}
\includegraphics[scale = 0.4]{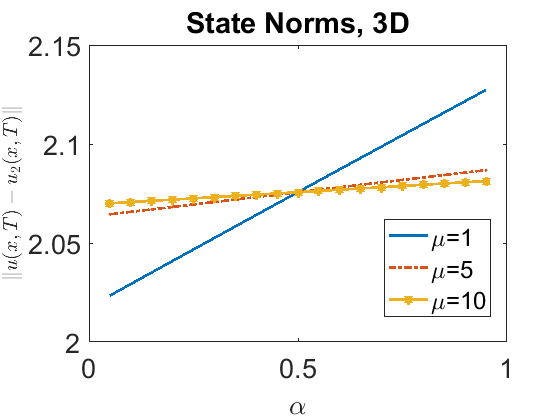}
\caption{Test 4 - Plot of the norms $\|u(T)-u_1(T)\|$ (left) and $\|u(T)-u_2(T)\|$ (right) for the Algorithm 3, with parameters ranging $\mu \in \left\{ 1,\, 5,\, 10 \right\}$ and $\alpha \in \left] 0,1 \right[.$}
\end{figure}

\begin{figure}[H]
\centering
\includegraphics[scale = 0.5]{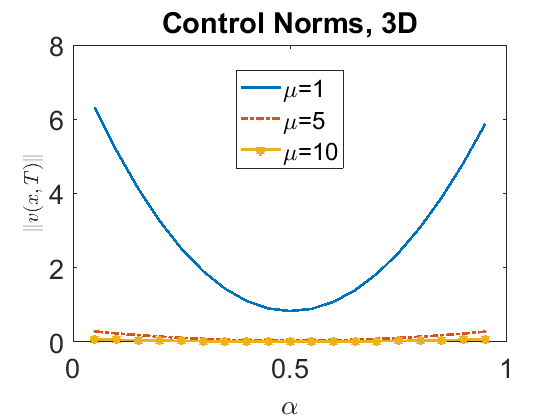}
\caption{Test 4 - Plot of the norms $\|v\|_{L^2\left( \left(0,T\right) \times \omega\right)}$ for parameter values ranging $\mu  \in \left\{ 1,\, 5,\,  10\right\}$ and $\alpha \in \left] 0, 1\right[.$}
\end{figure}

\begin{figure}[H]
\centering
\includegraphics[scale = 0.5]{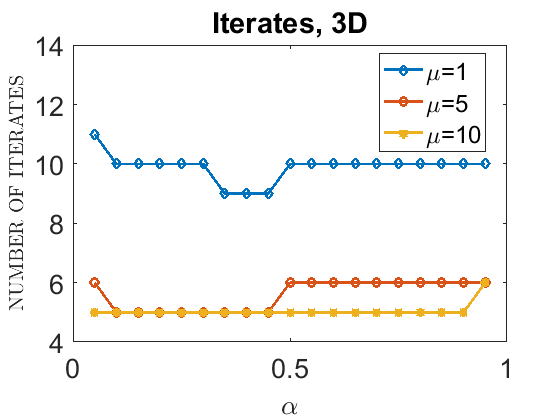}
\caption{Test 4 - Plot of the number of iterates we needed for convergence. Tolerance = $10^{-8}$.}
\end{figure}

\subsection{The bilinear control case} 

\subsubsection{Experiments in  the 2D case (Test 5)}\label{COMP-BL-2D}

\begin{center}
    \textbf{Algorithm 5}
\end{center}

\begin{figure}[H]
\centering
\includegraphics[scale = 0.4]{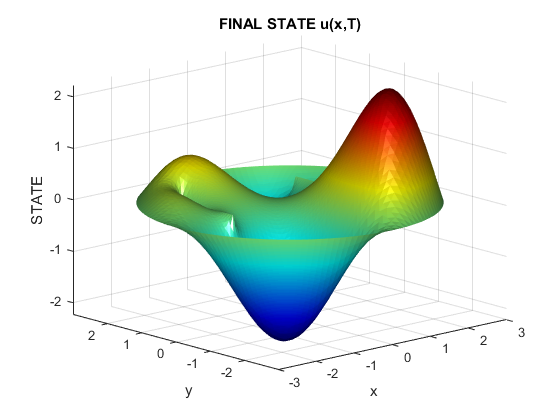}
\caption{Test 5 - Surface of the terminal controlled state $u(\cdot,T)$ corresponding to the parameter $\mu=5$ and $\alpha=0.5.$}
\end{figure}

\begin{figure}[H]
\centering
\includegraphics[scale = 0.4]{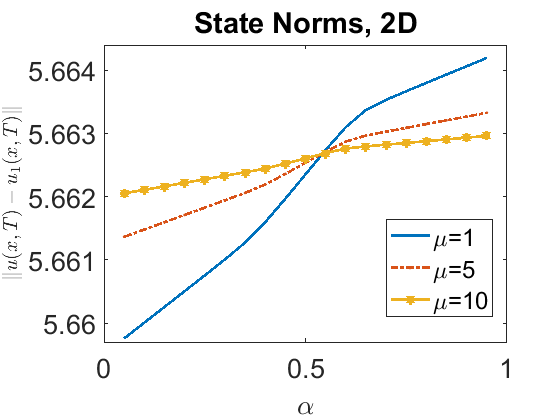}
\includegraphics[scale = 0.4]{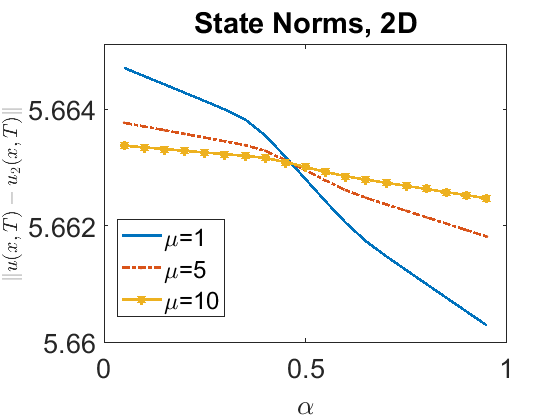}
\caption{Test 5 - Plot of the norms $\|u(T)-u_1(T)\|$ (left) and $\|u(T)-u_2(T)\|$ (right) for the Algorithm 5, with parameters ranging $\mu \in \left\{ 1,\, 5,\, 10 \right\}$ and $\alpha \in \left] 0,1 \right[.$}
\end{figure}

\begin{figure}[H]
\centering
\includegraphics[scale = 0.5]{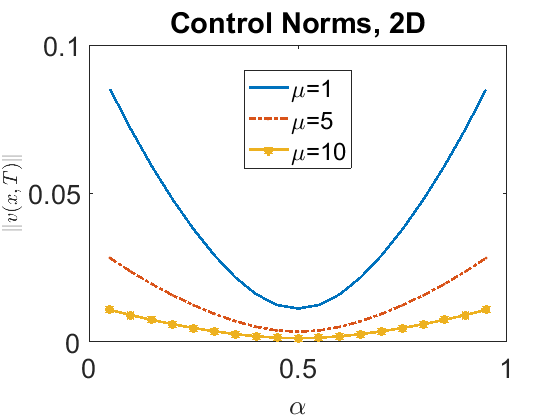}
\caption{Test 5 - Plot of the norms $\|v\|_{L^2\left( \left(0,T\right) \times \omega\right)}$ for parameter values ranging $\mu  \in \left\{ 1,\, 5,\,  10\right\}$ and $\alpha \in \left] 0, 1\right[.$}
\end{figure}

\begin{figure}[H]
\centering
\includegraphics[scale = 0.5]{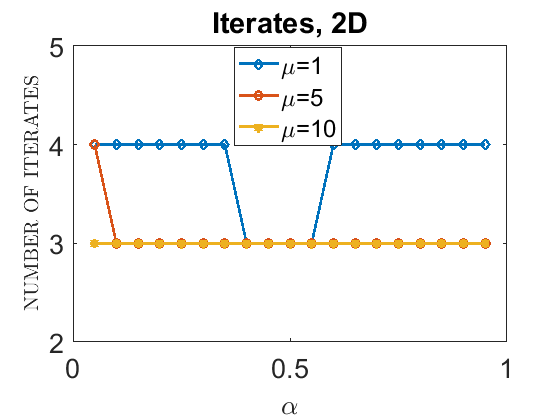}
\caption{Test 5 - Plot of the number of iterates we needed for convergence. Tolerance = $10^{-8}$.}
\end{figure}

\begin{center}
    \textbf{Algorithm 6}
\end{center}

\begin{figure}[H]
\centering
\includegraphics[scale = 0.4]{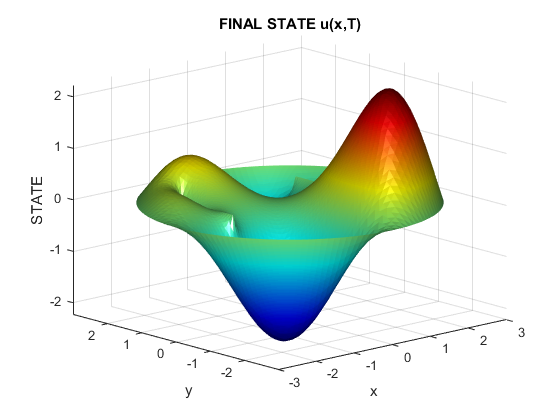}
\caption{Test 5 - Surface of the terminal controlled state $u(\cdot,T)$ corresponding to the parameter $\mu=5$ and $\alpha=0.5.$}
\end{figure}

\begin{figure}[H]
\centering
\includegraphics[scale = 0.4]{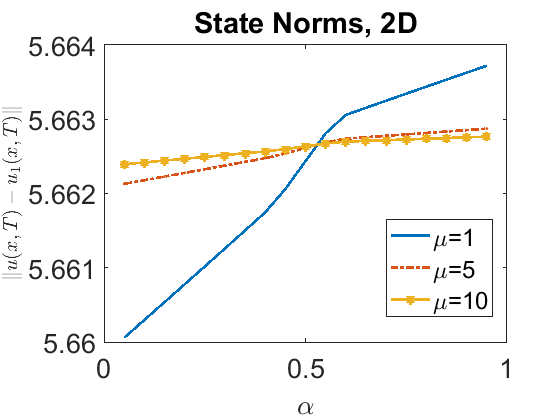}
\includegraphics[scale = 0.4]{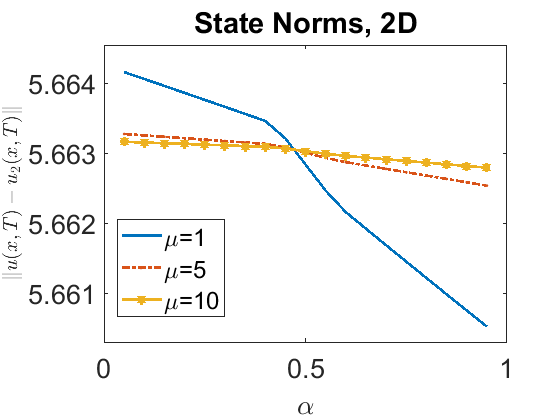}
\caption{Test 5 - Plot of the norms $\|u(\cdot,T)-u_1(\cdot,T)\|$ (left) and $\|u(x,T)-u_2(x,T)\|$ (right) for the Algorithm 6, with parameters ranging $\mu \in \left\{ 1,\, 5,\, 10 \right\}$ and $\alpha \in \left] 0,1 \right[.$}
\end{figure}

\begin{figure}[H]
\centering
\includegraphics[scale = 0.5]{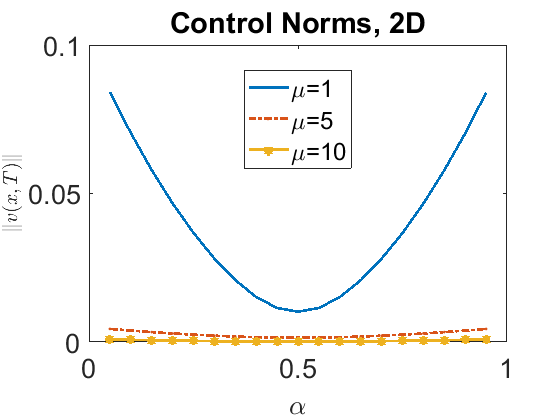}
\caption{Test 5 - Plot of the norms $\|v\|_{L^2\left( \left(0,T\right) \times \omega\right)}$ for parameter values ranging $\mu  \in \left\{ 1,\, 5,\,  10\right\}$ and $\alpha \in \left] 0, 1\right[.$}
\end{figure}

\begin{figure}[H]
\centering
\includegraphics[scale = 0.5]{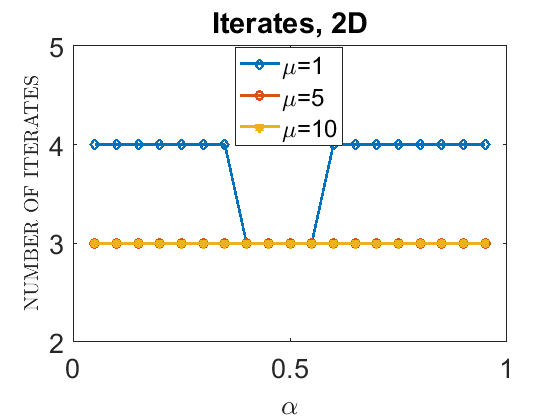}
\caption{Test 5 - Plot of the number of iterates we needed for convergence. Tolerance = $10^{-8}$.}
\end{figure}

\subsection{Experiments in the 3D case (Test 6)}

\begin{center}
    \textbf{Algorithm 5}
\end{center}

\begin{figure}[H]
\centering
\includegraphics[scale = 0.4]{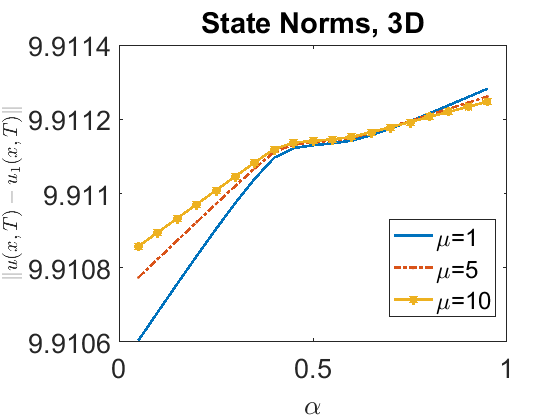}
\includegraphics[scale = 0.4]{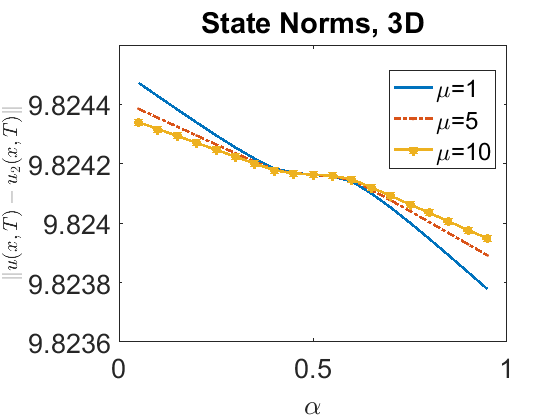}
\caption{Test 6 - Plot of the norms $\|u(T)-u_1(T)\|$ (left) and $\|u(T)-u_2(T)\|$ (right) for the Algorithm 5, with parameters ranging $\mu \in \left\{ 1,\, 5,\, 10 \right\}$ and $\alpha \in \left] 0,1 \right[.$}
\end{figure}

\begin{figure}[H]
\centering
\includegraphics[scale = 0.5]{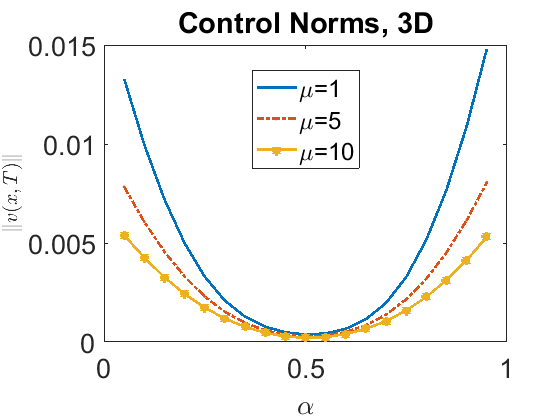}
\caption{Test 6 - Plot of the norms $\|v\|_{L^2\left( \left(0,T\right) \times \omega\right)}$ for parameter values ranging $\mu  \in \left\{ 1,\, 5,\,  10\right\}$ and $\alpha \in \left] 0, 1\right[.$}
\end{figure}

\begin{figure}[H]
\centering
\includegraphics[scale = 0.5]{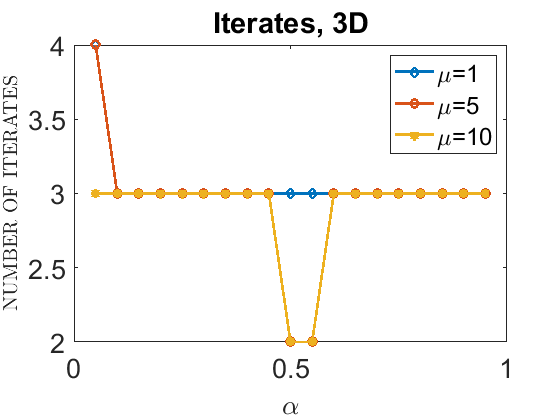}
\caption{Test 6 - Plot of the number of iterates we needed for convergence. Tolerance = $10^{-8}$.}
\end{figure}

\begin{center}
    \textbf{Algorithm 6}
\end{center}

\begin{figure}[H]
\centering
\includegraphics[scale = 0.4]{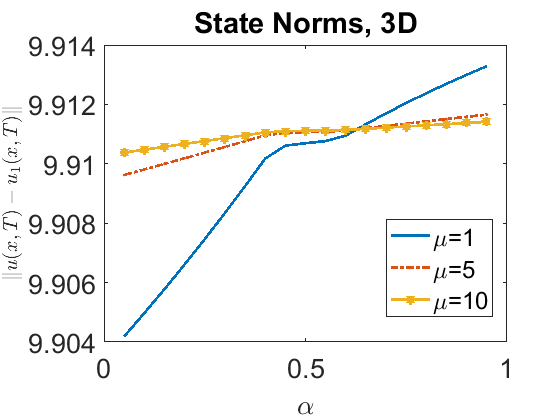}
\includegraphics[scale = 0.4]{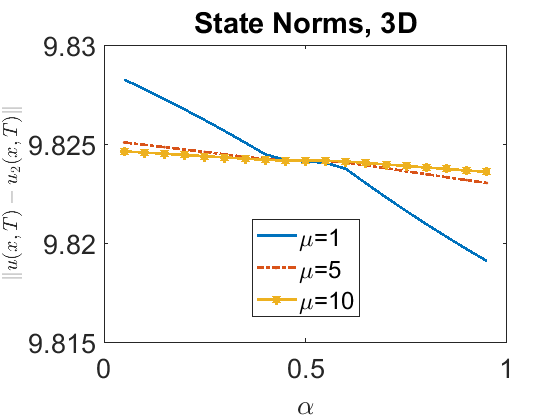}
\caption{Test 6 - Plot of the norms $\|u(T)-u_1(T)\|$ (left) and $\|u(T)-u_2(T)\|$ (right) for the Algorithm 6, with parameters ranging $\mu \in \left\{ 1,\, 5,\, 10 \right\}$ and $\alpha \in \left] 0,1 \right[.$}
\end{figure}

\begin{figure}[H]
\centering
\includegraphics[scale = 0.5]{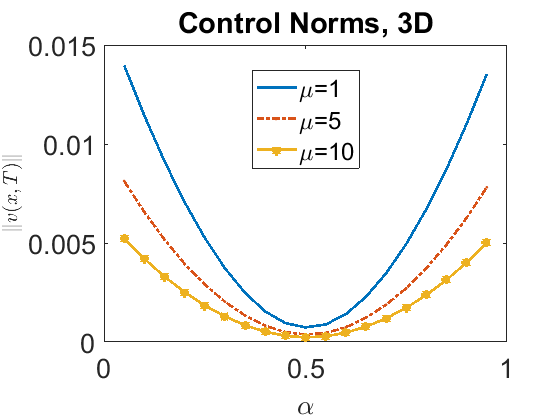}
\caption{Test 6 - Plot of the norms $\|v\|_{L^2\left( \left(0,T\right) \times \omega\right)}$ for parameter values ranging $\mu  \in \left\{ 1,\, 5,\,  10\right\}$ and $\alpha \in \left] 0, 1\right[.$}
\end{figure}

\begin{figure}[H]
\centering
\includegraphics[scale = 0.5]{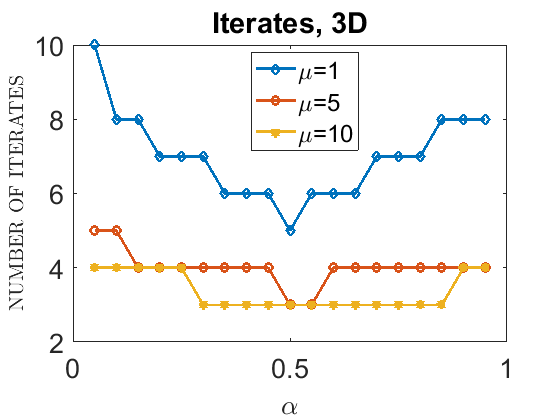}
\caption{Test 6 - Plot of the number of iterates we needed for convergence. Tolerance = $10^{-8}$.}
\end{figure}





\section{Conclusions} \label{sec:Conclusions}

We here analyzed bi-objective optimal control problems for diffusive systems. The concept of optimality we focused on has been that of Pareto. Our models were governed by linear, semilinear and bilinear heat equations. Firstly, we proved in each scenario that the problems were well-posed, under suitable assumptions. Then, we have been able to formulate numerical algorithms allowing to compute the solutions. They are of different kinds: a conjugate gradient method, a fixed-point algorithm and a Newton method.


We finish the work by providing several numerical experiments in two and three dimensions to illustrate our results. 

\section*{Acknowledgement}

This study was financed in part by Coordena\c{c}\~ao de Aperfei\c{c}oamento de Pessoal de N\'ivel Superior - Brasil (CAPES) - Finance code 001. EFC is partially supported by grant PID2020-114976GB-I00 (DGI-MINECO, Spain) and CAPES (Brazil).

\bibliographystyle{plain}

\bibliography{refs}

\end{document}